\definecolor{MyDarkBlue}{rgb}{0.15,0.25,0.45}
\newcommand{\calA}{{\mathcal A}}
\newcommand{\calB}{{\mathcal B}}
\newcommand{\calC}{{\mathcal C}}
\newcommand{\calE}{{\mathcal E}}
\newcommand{\calF}{{\mathcal F}}
\newcommand{\calG}{{\mathcal G}}
\newcommand{\calH}{{\mathcal H}}
\newcommand{\calI}{{\mathcal I}}
\newcommand{\calK}{{\mathcal K}}
\newcommand{\calL}{{\mathcal L}}
\newcommand{\calM}{\mathcal{M}}
\newcommand{\calN}{{\mathcal N}}
\newcommand{\calP}{{\mathcal P}}
\newcommand{\calQ}{{\mathcal Q}}
\newcommand{\calS}{\mathcal{S}}
\newcommand{\calT}{{\mathcal T}}
\newcommand{\calV}{{\mathcal V}}
\newcommand{\R}{{\mathbb{R}}}
\newcommand{\N}{{\mathbb{N}}}
\newcommand{\C}{{\mathbb{C}}}
\newcommand{\Z}{{\mathbb{Z}}}
\newcommand{\Q}{{\mathbb{Q}}}
\newcommand{\PP}{{\mathbb{P}}}
\newcommand{\LL}{{\mathbb{L}}}
\newcommand{\balpha}{{\boldsymbol\alpha}}
\newcommand{\scrA}{{\mathscr A}}
\newcommand{\scrC}{{\mathscr C}}
\newcommand{\scrF}{{\mathscr F}}
\newcommand{\scrO}{{\mathscr O}}
\newcommand{\scrT}{{\mathscr T}}
\newcommand{\scrX}{\mathscr X}
\newcommand{\scrM}{\mathscr M}
\newcommand{\sfH}{\mathsf H}
\newcommand{\sfG}{\mathsf G}
\newcommand{\sfK}{{\mathsf K}}
\newcommand{\catP}{\mathsf{P}}
\newcommand{\sfS}{{\mathsf S}}
\newcommand{\sfs}{\mathsf{s}}
\newcommand{\sft}{\mathsf{t}}
\newcommand{\fraksl}{\mathfrak{sl}}
\newcommand{\frakg}{\mathfrak{g}}
\newcommand{\bfd}{\mathbf{d}}
\newcommand{\bfe}{\mathbf{e}}
\newcommand{\bfm}{\mathbf{m}}
\newcommand{\ch}{{\rm ch}}
\newcommand{\CH}{{\mathcal H}}
\newcommand{\Hom}{\mathsf{Hom}}
\newcommand{\Ext}{\mathsf{Ext}}
\newcommand{\End}{\mathsf{End}}
\newcommand{\Pic}{\mathsf{Pic}}
\newcommand{\CT}{{\mathcal T}}
\newcommand{\GL}{\mathsf{GL}}
\newcommand{\sfRep}{\mathsf{Rep}}
\newcommand{\catCoh}{\mathsf{Coh}}
\newcommand{\catMod}{\mathsf{Mod}}
\newcommand{\catCohps}{\mathsf{Coh}_{\mathsf{ps}}}
\newcommand{\catPps}{\mathsf{P}_{\mathsf{ps}}}
\newcommand{\sfch}{\mathsf{ch}}
\newcommand{\Mod}{\textrm{-} \mathsf{Mod}}
\newcommand{\catPerf}{\mathsf{Perf}}
\newcommand{\catCohb}{\mathsf{Coh}^{\mathsf{b}}}
\newcommand{\catQCoh}{\mathsf{QCoh}}
\newcommand{\catDb}{\mathsf{D}^\mathsf{b}}
\newcommand{\catDbps}{\mathsf{D}^\mathsf{b}_{\mathsf{ps}}}
\newcommand{\ev}{\mathsf{ev}}
\newcommand{\bfCoh}{\mathbf{Coh}}
\newcommand{\bfPerCoh}{\mathbf{PerCoh}}
\newcommand{\ostar}{\mathbin{\mathpalette\make@circled\star}}
\newcommand{\make@circled}[2]{%
	\ooalign{$\m@th#1\smallbigcirc{#1}$\cr\hidewidth$\m@th#1#2$\hidewidth\cr}%
}
\newcommand{\smallbigcirc}[1]{%
	\vcenter{\hbox{\scalebox{0.77778}{$\m@th#1\bigcirc$}}}%
}
\newcommand{\triend}{\parbox{2mm}{\hfill} \hfill\mbox{\hspace{0.2mm}}\hfill$\triangle$}
\newcommand{\ocend}{\parbox{2mm}{\hfill} \hfill\mbox{\hspace{0.2mm}}\hfill$\oslash$}
\newtheorem{theorem}{Theorem}
\newtheorem{theoremintroduction}{Theorem}
\newtheorem{proposition}[theorem]{Proposition}
\newtheorem{lemma}[theorem]{Lemma}
\newtheorem{corollary}[theorem]{Corollary}
\newtheorem{corollary*}{Corollary}
\newtheorem*{theorem*}{Theorem}
\newtheorem*{proposition*}{Proposition}
\newtheorem*{conjecture*}{Conjecture}
\numberwithin{equation}{section}
\numberwithin{theorem}{section}
\theoremstyle{remark}
\newtheorem{ex}[theorem]{Example}
\newenvironment{example}{\begin{ex}}{\triend\end{ex}}
\theoremstyle{remark}
\newtheorem{rem}[theorem]{Remark}
\newenvironment{remark}{\begin{rem}}{\triend\end{rem}}
\theoremstyle{definition}
\newtheorem{defin}[theorem]{Definition}
\newenvironment{definition}{\begin{defin}}{\ocend\end{defin}}
\newtheorem{notat}[theorem]{Notation}
\newenvironment{notation}{\begin{notat}}{\ocend\end{notat}}
\title[McKay correspondence, cohomological Hall algebras and categorification]{McKay correspondence, cohomological Hall algebras and categorification} 
\author[D.-E.~Diaconescu]{Duiliu-Emanuel Diaconescu}
\address[Duiliu-Emanuel Diaconescu]{New High Energy Theory Center - Serrin Building, Rutgers, The State University Of New Jersey, 126 Frelinghuysen Rd., Piscataway, NJ 08854-8019, USA}
\curraddr{}
\email{\href{mailto:duiliu@physics.rutgers.edu}{duiliu@physics.rutgers.edu}}
\author[M.~Porta]{Mauro Porta}
\address[Mauro Porta]{Institut de recherche mathématique avancée (IRMA), Université de Strasbourg, France}
\curraddr{}
\email{\href{mailto:porta@math.unistra.fr}{porta@math.unistra.fr}}
\author[F.~Sala]{Francesco Sala}
\address[Francesco Sala]{Università di Pisa, Dipartimento di Matematica, Largo Bruno Pontecorvo 5, 56127 Pisa (PI), Italy}
\address{Kavli IPMU (WPI), UTIAS, The University of Tokyo, Kashiwa, Chiba 277-8583, Japan}
\curraddr{}
\email{\href{mailto:francesco.sala@unipi.it}{francesco.sala@unipi.it}}
\thanks{The work of D.-E.~D. was partially supported by NSF grant DMS-1802410, while the work of F.~S. was partially supported by JSPS KAKENHI Grant Numbers JP21K03197 and JP18K13402.}
\subjclass[2010]{Primary: 14A20; Secondary: 17B37, 55P99}
\keywords{Hall algebras, resolutions of singularities, t-structures, categorification, dg-categories}
\begin{document}

\begin{abstract}

Let $\pi\colon Y\to X$ denote the canonical resolution of the two dimensional Kleinian singularity $X$ of type ADE. In the present paper, we establish isomorphisms between the cohomological and K-theoretical Hall algebras of $\omega$-semistable properly supported sheaves on $Y$ with fixed slope $\mu$ and $\zeta$-semistable finite-dimensional representations of the preprojective algebra of affine type ADE of slope zero respectively, under some conditions on $\zeta$ depending on the polarization $\omega$ and $\mu$. These isomorphisms are induced by the derived McKay correspondence. In addition, they are interpreted as decategorified versions of a monoidal equivalence between the corresponding categorified Hall algebras. In the type A case, we provide a finer descriptions of the cohomological, K-theoretical and categorified Hall algebra of $\omega$-semistable properly supported sheaves on $Y$ with fixed slope $\mu$: for example, in the cohomological case, the algebra can be given in terms of Yangians of finite type ADE Dynkin diagrams.

\end{abstract}

\maketitle\thispagestyle{empty}

\tableofcontents

\section{Introduction}

Let $G$ be a finite subgroup of $\mathsf{SL}(2, \C)$ and let $\calQ$ be the affine Dynkin diagram corresponding to $G$. Let $\pi\colon Y\to \C^2/G$ denote the canonical resolution of the two dimensional Kleinian singularity associated with $G$. The present paper aims to investigate a relation between the \textit{derived McKay correspondence} \cite{Kleinian_derived} for $Y$ and the theory of cohomological (K-theoretical, categorified) Hall algebras in this setting. The former is an equivalence between the bounded derived category of coherent sheaves on $Y$ and the bounded derived category of representations of the \textit{preprojective algebra} $\Pi_{\calQ}$  of $\calQ$.\footnote{In the present paper, we will use the point of view developed in \cite{VdB_Flops}.} One can define  (convolution) algebra structures on the homology (resp.\ K-theory) of the moduli stack of coherent sheaves on $Y$ and of the moduli stack of finite-dimensional representations of the preprojective algebra, respectively: these are two examples of \textit{two-dimensional cohomological} (resp.\ \textit{K-theoretical}) \textit{Hall algebras}. It is natural to wonder what kind of relation the derived McKay correspondence establishes at the level of these algebras. 

In the present paper, we will prove that the derived McKay correspondence induces isomorphisms between these algebras, after restricting to the \textit{semistable ones}. In addition, we will describe the relation between the derived McKay correspondence and the categorification of these algebras (called \textit{categorified Hall algebras}), recently introduced in \cite{Porta_Sala_Hall, DPS_Torsion_Pairs}. We will view all these relations as ``versions'' of the (derived) McKay correspondence for (semistable) cohomological, K-theoretical, and categorified Hall algebras.

\subsection{Hall algebras}

Let $\bfCoh_{\mathsf{ps}}(Y)$ be the derived moduli stack of properly supported coherent sheaves on $Y$. Given $\mu \in \Q_{>0}$ and a $\Q$-polarization\footnote{This means that there exists a positive integer $r$ such that $r\omega$ is integral and ample.} $\omega$ on $Y$, we denote by $\bfCoh_{\mu}^{\omega\textrm{-}\mathsf{ss}}(Y)$  the derived moduli stack of $\omega$-semistable properly supported coherent sheaves of dimension one on $Y$ of slope $\mu$. By abuse of notation, we will denote by $\bfCoh_{\infty}^{\omega\textrm{-}\mathsf{ss}}(Y)$ the derived moduli stack of zero-dimensional coherent sheaves on $Y$. 

Let $\scrX$ be one of the derived stacks above. As shown in \cite{Porta_Sala_Hall}, the dg-category\footnote{Here, by dg-category we mean \textit{stable $\infty$-category}. Moreover, $\catCohb(-)$ denotes the dg-category of locally cohomologically bounded complexes with coherent cohomology.} $\catCohb( \scrX ) $ has the structure of an $\mathbb E_1$-monoidal dg-category (a \textit{categorified Hall algebra} of $\scrX$) which induces, after passing to K-theory, a structure of an associative algebra on $\sfG_0( \scrX )$ (a \textit{K-theoretical Hall algebra} of $\scrX$). By using the construction of Borel-Moore homology for higher stacks developed in \cite{COHA_surface}, one can show that $\sfH_\ast^{\mathsf{BM}}( \scrX )$ has the structure of an associative algebra (a \textit{cohomological Hall algebra} of $\scrX$). 
In the K-theoretical and cohomological cases, these algebras coincide with those constructed by Kapranov and Vasserot in \cite{COHA_surface} and, for $G=\Z_2$, with those defined in \cite{Sala_Schiffmann}.  

As a necessary step in providing a McKay correspondences for the $\mathbb E_1$-monoidal dg-category and the algebras descrived above, we need a ``quiver'' counterpart of the constructions in \cite{Porta_Sala_Hall}. This has been done in \cite{DPS_Torsion_Pairs} in greater generality. Let $\bfCoh_{\mathsf{ps}}(\Pi_\calQ)$ be the derived moduli stack of Toën-Vaquié's pseudo-perfect objects of the dg-category $\Pi_\calQ\Mod$ of right $\Pi_\calQ$-modules (i.e., representations of $\Pi_\calQ$), which are flat with respect to the standard $t$-structure: this is a derived enhancement of the classical moduli stack $\mathcal{R}ep(\Pi_\calQ)$ of finite-dimensional representations of $\Pi_\calQ$. Given a stability condition $\zeta \in \Q^{\calQ_0}$ and a slope $\vartheta\in \Q$, let $\bfCoh_\vartheta^{\zeta\textrm{-}\mathsf{ss}}(\Pi_\calQ)$ denote the open derived substack of $\zeta$-semistable representations of fixed $\zeta$-slope $\vartheta$. Thanks to the results in \textit{loc.cit.}, there exist categorified, K-theoretical, and cohomological Hall algebras associated with both $\bfCoh_{\mathsf{ps}}(\Pi_\calQ)$ and $\bfCoh_\vartheta^{\zeta\textrm{-}\mathsf{ss}}(\Pi_\calQ)$.
The cohomological and K-theoretical Hall algebras of $\Pi_\calQ$ can alternatively be obtained via the explicit description of the truncation $\mathcal{R}ep(\Pi_\calQ)$ of $\bfCoh_{\mathsf{ps}}(\Pi_\calQ)$ via moment maps instead of using derived algebraic geometry, see \cite{SV_generators}.
As shown by Davison in \cite[Appendix]{RS_Hall} (see also \cite{YZ_2Hall}), a dimensional reduction argument allows to realize the cohomological Hall algebra of $\Pi_\calQ$ as a \textit{three-dimensional} Kontsevich-Soibelman cohomological Hall algebra \cite{KS_Hall} of the Jacobi algebra of a quiver $\widetilde{\calQ}$ with a potential $W$, canonically associated to $\calQ$. 

\subsection{McKay correspondence}

By \cite{VdB_Flops,NN, N_derived}, the bounded derived category of coherent sheaves $\catDb(\catCoh(Y))$ has a local projective generator $\calP$. This determines by tilting an equivalence of derived categories 
\begin{align}\label{eq:tau-equivalence}
	\tau \colon \catDb(\catCoh(Y)) \xrightarrow{\sim} \catDb(\catMod(\Pi_\calQ))\ , 
\end{align}
which we see as a \textit{derived McKay correspondence}\footnote{An alternative approach to derived McKay correspondence via Fourier-Mukai functors was developed in \cite{Kleinian_derived,BKR}.}, where $\catMod(\Pi_\calQ)$ denotes the abelian category of representations of $\Pi_\calQ$. The equivalence $\tau$ admits a natural enhancement at the level of dg-categories.

In the following, we denote by $C_i$ the $i$-th irreducible component of $\pi^{-1}(0)$ for $1\leq i \leq r$, and by $\langle -, - \rangle$ the intersection pairing on the Picard group $\Pic(Y)$, which extends canonically to $\Pic_\Q(Y)\coloneqq \Pic(Y)\otimes_\Z\Q$.  

By analyzing the behavior\footnote{The study of moduli spaces (and stacks) of semistable coherent sheaves on smooth (quasi-)projective complex surfaces via representations of quivers has a long history that we can trace back to the work of Drezet and Le Potier \cite{DLP_StableSheaves}.} of semistability under $\tau$ and applying the formalism developed in \cite{Porta_Sala_Hall, DPS_Torsion_Pairs} to this setting, we prove our main result: 
\begin{theoremintroduction}\label{thm:mainone}
	Let $\omega$ be a $\Q$-polarization of $Y$ and $\mu\in \Q_{>0}\cup\{\infty\}$. Set
	\begin{align}
			\zeta_i\coloneqq \langle \omega, C_i\rangle
	\end{align}
	for $1\leq i \leq r$, and
	\begin{align}
		\zeta_{r+1}\coloneqq\begin{cases}
	\displaystyle	\frac{1}{\mu}  - \sum_{i=1}^{r}\, \zeta_i\ & \text{if } \mu\neq \infty\ ,\\[8pt]
	\displaystyle	- \sum_{i=1}^{r}\, \zeta_i &\text{otherwise}\ .
		\end{cases}
	\end{align}
		Then the tilting functor induces a monoidal equivalence
		\begin{align}
			\catCohb( \bfCoh_{\mu}^{\omega\textrm{-}\mathsf{ss}}(Y) )\simeq \catCohb( \bfCoh^{\zeta\textrm{-}\mathsf{ss}}_0(\Pi_\calQ) )
		\end{align}
		as $\mathbb E_1$-monoidal dg-categories. Therefore, it induces isomorphisms of associative algebras\footnote{Here and it was follows, we prefer to use derived stacks, although the $G_0$-theory and Borel-Moore homology of a derived stack coincide with those of its classical truncation.}
		\begin{align}
			\sfG_0( \bfCoh_{\mu}^{\omega\textrm{-}\mathsf{ss}}(Y)  ) \simeq \sfG_0( \bfCoh^{\zeta\textrm{-}\mathsf{ss}}_0(\Pi_\calQ) ) \quad\text{and}\quad \sfH_\ast^{\mathsf{BM}}( \bfCoh_{\mu}^{\omega\textrm{-}\mathsf{ss}}(Y)  ) \simeq \sfH_\ast^{\mathsf{BM}}( \bfCoh^{\zeta\textrm{-}\mathsf{ss}}_0(\Pi_\calQ) )\ .
		\end{align}
		Moreover, the same holds equivariantly  with respect to a diagonal torus $T\subset \GL(2,\C)$ centralizing the finite group $G$.
\end{theoremintroduction}

\subsection{Type A case}

When $G=\Z_{N+1}$, with $N\geq 1$, we can provide a refined version of our result above. Fix $\mu\in \Q_{>0}$. Then the set $\sfS_\mu$ of isomorphism classes of $\omega$-stable properly supported sheaves of dimension one with slope $\mu$ is finite. Moreover, by Lemma~\ref{finitewallA}, each such sheaf $\calF$ is scheme-theoretically supported on a connected divisor $C_{i,j}\coloneqq\sum_{\ell=i}^j C_\ell$ with $1\leq i \leq j \leq N$. The divisor associated to an isomorphism class $\alpha \in \sfS_\mu$ will be denoted by $C_{i_\alpha, j_\alpha}$. An ordered sequence of isomorphism classes $(\alpha_1, \ldots, \alpha_s)$ is said to be a \textit{chain} if and only if 
\begin{itemize}[leftmargin=0.6cm]\itemsep0.2cm
	\item[(a)] $j_{\alpha_t} = i_{\alpha_{t+1}}$ for all $1\leq t \leq s-1$, and 
	\item[(b)] the given sequence is maximal with this property. 
\end{itemize}
Then the set $\sfS_\mu$ admits a unique partition into chains $\balpha_c$, with $1\leq c \leq C_\mu$. Let $s_c$ denote the length of the chain $\balpha_c$. 

Now, Corollaries~\ref{chaincoroA} and \ref{chaincorB} ensure us that there is a block decomposition of the category of semistable coherent sheaves on Y (and likewise for the preprojective algebra side). Combining a categorical result for chains of rational curves proven in \cite{Tilting_chains} (cf.\ \S\ref {chainsect}) with the formalism in \cite{Porta_Sala_Hall, DPS_Torsion_Pairs}, we obtain the following:
\begin{theoremintroduction} \label{thm:maintwo} 
	Let $\omega$ be a polarization of $Y$ and $\mu\in \Q_{>0}$.	Then we have a monoidal functor
	\begin{align}
		\bigotimes_{c=1}^{C_\mu}\, \catCohb( \bfCoh(\Pi_{A_{s_c}}) ) \longrightarrow	\catCohb( \bfCoh_\mu^{\omega\textrm{-}\mathsf{ss}}(Y) ) 
	\end{align}
	of $\mathbb E_1$-monoidal dg-categories. It induces isomorphisms of associative algebras
	\begin{align}\label{eq:isomorphisms}
		\sfG_0( \bfCoh_\mu^{\omega\textrm{-}\mathsf{ss}}(Y)  ) \simeq \bigotimes_{c=1}^{C_\mu}\,  \sfG_0( \bfCoh(\Pi_{A_{s_c}}) ) \ ,\ \sfH_\ast^{\mathsf{BM}}( \bfCoh_\mu^{\omega\textrm{-}\mathsf{ss}}(Y)  ) \simeq \bigotimes_{c=1}^{C_\mu}\, \sfH_\ast^{\mathsf{BM}}( \bfCoh(\Pi_{A_{s_c}}) )\ .
	\end{align}
	Moreover, these results holds also equivariantly with respect to a diagonal torus $T\subset \GL(2,\C)$ centralizing the finite group $G$.
\end{theoremintroduction}
In the equivariant setting, we can give a representation-theoretic interpretation of the isomorphisms \eqref{eq:isomorphisms} in terms of finite Yangians and quantum affine algebras of type ADE, using \cite{SV_Yangians, VV_Quantumloop}, respectively.

Let $\omega$ be a polarization of $Y$ and $\mu\in \Q_{>0}$.	Set
\begin{align}
	\zeta_i\coloneqq \langle \omega, C_i\rangle \text{ for } 1\leq i \leq N\ , \quad\text{ and }\quad \zeta_{N+1}\coloneqq 	\displaystyle	\frac{1}{\mu}  - \sum_{i=1}^{N}\, \zeta_i\ .
\end{align}
Then, by using Theorem~\ref{thm:mainone}, one has a version of Theorem~\ref{thm:maintwo} in the quiver setting, by replacing $ \bfCoh_\mu^{\omega\textrm{-}\mathsf{ss}}(Y)$ with $\bfCoh^{\zeta\textrm{-}\mathsf{ss}}_0(\Pi_\calQ) $.

\subsubsection{Betti numbers and restricted Kac polynomials}

By using Theorem \ref{thm:maintwo} and the characterization of the generating function of Betti numbers of moduli stacks of semistable representations of the preprojective algebra via a plethystic exponential involving the corresponding \textit{Kac's polynomial} given e.g. in \cite[\S1.8.2 and \S.19]{Davison_integrality}, we obtain an explicit formula for the Kac's polynomial $a^{\zeta\textrm{-}\mathsf{ss}}_{\calQ, \bfd}(t)$ corresponding to the stack $\bfCoh_0^{\zeta\textrm{-}\mathsf{ss}}(\Pi_\calQ; \bfd)$, as we shall explain now.

For any $1\leq c \leq C_\mu$, let $\Delta_c^{+}$ denote the set of positive roots of the associated Lie algebra of type $A_{s_c}$. For $\lambda_c = \sum_{\ell=i}^j\, \alpha_{c, \ell}$, with $1\leq c\leq C_\mu$, set
\begin{align}
	\bfd(\lambda_c)  \coloneqq \sum_{\ell=i}^j\, \bfd(\alpha_{c, \ell})\ ,
\end{align}
where $\alpha_{c,1}, \ldots, \alpha_{c, s_c}$ are the simple roots of $A_{s_c}$ and $\bfd(\alpha_{c, \ell})\in \Z^{N+1}$ denotes the dimension vector of $\alpha_{c, \ell}$, seen as an element of $\sfS_\mu$, for $\ell=1, \ldots, s_c$. As shown in \S\ref{ss:betti_numbers}, by using Theorem~\ref{thm:maintwo} together with Davison's results \cite{Davison_integrality}, one gets:
\begin{align}
	\sum_{\bfd\in \Z^{N+1}, \, \bfd\neq 0}\, a^{\zeta\textrm{-}\mathsf{ss}}_{\calQ,\, \bfd}(q^{-1/2}) y^\bfd
	= \sum_{c=1}^{C_\mu}\, \sum_{\lambda_c\in \Delta^+_c}\, y^{\bfd(\lambda_c)}\ ,
\end{align} 
where $y=(y_1, \ldots, y_{N+1})$ are formal variables, and $y^\bfd \coloneqq \prod_{i=1}^{N+1} y_i^{d_i}$ for any $\bfd \in \Z^{N+1}$. 

\subsection{Affine Yangians, t-structures and Hall algebras}

We conclude this introduction by discussing the ``nature'' of the categorified (K-theoretical, cohomological) Hall algebra of $\bfCoh(Y)$.

In the theory of classical Hall algebras, an equivalence $\tau\colon \catDb(\calA)\to \catDb(\calB)$ between the bounded derived categories of two abelian categories $\calA$ and $\calB$ does not necessarily lift to an isomorphism between the corresponding Hall algebras. For example, one knows that the derived category of representations of the Kronecker quiver is equivalent to the bounded derived category of coherent sheaves on $\PP^1$. The corresponding Hall algebras are not isomorphic but they realize different halves of the quantum loop algebra of $\fraksl(2)$: the former is the Drinfeld-Jimbo positive half (cf.\ \cite{Ringel_Hall, Green_Hall}), while the latter is the positive half with respect to the so-called Drinfeld' new presentation, as proved in \cite{Kapranov_Hall}. Cramer's theorem \cite{Cramer_Hall} shows that the derived equivalence yields to an isomorphism between the corresponding (reduced) Drinfeld doubles. In the previous example, Cramer's theorem provides an algebro-geometric realization of Beck's isomorphism \cite{Beck_Quantum} between the two different realizations of the quantum loop algebra of $\fraksl(2)$.

By analogy with what we recalled above, we do not expect that the McKay equivalence \eqref{eq:tau-equivalence} induces an isomorphism between the categorified (K-theoretical, cohomological) Hall algebras associated to the moduli stacks $\bfCoh_\mathsf{ps}(Y)$ and $\bfCoh_\mathsf{ps}(\Pi_\calQ)$, respectively. This is because the two algebras are associated to two different t-structures on the dg-category $\catQCoh(Y)$, the standard one and the one induced by $\tau$ - called \textit{perverse t-structure}. Both algebras are induced by deeper structures at the level of the corresponding moduli stacks $\bfCoh_\mathsf{ps}(Y)$ and on $\bfCoh_\mathsf{ps}(\Pi_\calQ)$: these structures go under the name of \textit{Dyckerhoff-Kapranov's 2-Segal space} \cite{Dyckerhoff_Kapranov_Higher_Segal}. Both stacks $\bfCoh_\mathsf{ps}(Y)$ and $\bfCoh_\mathsf{ps}(\Pi_\calQ)$ are open substacks of  Toën-Vaquié's moduli of objects $\scrM_{\catPerf(Y)}$ and the natural 2-Segal space on the latter induces the 2-Segal space structures on the former. 

Since the cohomological Hall algebra of $\bfCoh_\mathsf{ps}(\Pi_\calQ)$ realizes the positive half, in the Drinfeld's presentation of the Maulik-Okounkov Yangian of affine type ADE, from the perspective described above, it is natural to expect that the cohomological Hall algebra of $\bfCoh_\mathsf{ps}(Y)$ should realize a \textit{completely new} positive half of (a version of) the Yangian of affine type ADE.

The discussion above justifies the study of the cohomological Hall algebra of $\bfCoh_\mathsf{ps}(Y)$ from an algebraic point of view.
From a more geometric perspective, the study this cohomological Hall algebra can be understood within the context of a vaster program, whose aim is to provide a cohomological version of the \textit{algebra of Hecke operators} arising from Ginzburg-Kapranov-Vasserot's the geometric Langlands correspondence for surfaces \cite{GKV_Langlands}. 

The present paper represents a first step in the  direction of understanding the whole cohomological Hall algebra of $\bfCoh_\mathsf{ps}(Y)$, which will be pursued further in the future.

\subsection{Outline}

\S\ref{setupsection} is devoted to the theory of perverse coherent sheaves on a resolution $Y$ of a Kleinian singularity and the study of the relation between their semistability and the semistability of coherent sheaves on $Y$. In \S\ref{s:Cat-McKay} we provide a version of the McKay correspondence for semistable categorified, K-theoretical, and cohomological Hall algebras. There are also two appendices: one about a realization of the perverse $t$-structure via tilting, following \cite{YoshiokaPerverse} and another one about some characterizations of sheaves on surfaces.

\subsection*{Acknowledgments}

We would like to thank Ben Davison, Greg Moore, Tony Pantev, Sarah Scherotzke, Olivier Schiffmann, Yan Soibelman, and Éric Vasserot for enlightening conversations. We also thank Martin Kalck to pointing out an error in the first version of the paper and providing comments. Finally, we thank Éric Vasserot for his encouragement to generalize the first version of the paper dealing with type A quivers to the current one dealing with type ADE quivers. 

Part of this work was initiated while the third-named author was visiting Rutgers University. He is grateful to this institution for their hospitality and wonderful working conditions.

\section{Perverse coherent sheaves on resolutions of quotient singularities}\label{setupsection}

In this section, we will recall basic geometric properties of the resolution $Y$ of the type ADE singular surface and the (categorical) McKay correspondence, establishing a relation between the bounded derived category of coherent sheaves on $Y$ and the bounded derived category of representations of the preprojective algebra of the affine type ADE quiver.

\subsection{Geometry of the resolutions}\label{ss:resolution}

First, recall that all finite subgroups $G$ of $\mathsf{SL}(2, \C)$ are classified by Dynkin diagrams of finite type ADE.

Fix a finite subgroup $G$ of $\mathsf{SL}(2, \C)$ and let $\calQ^{\mathsf{fin}}=(\calQ^{\mathsf{fin}}_0, \calQ^{\mathsf{fin}}_1)$ be the corresponding Dynkin diagram. We denote by $\frakg$ the simple Lie algebra associated to $\calQ^{\mathsf{fin}}$. Let $\alpha_1, \ldots, \alpha_r$ be the simple roots of $\frakg$, where $r$ is the dimension of the Cartan subalgebra of $\frakg$, and let
\begin{align}
	\alpha_{r+1}=\sum_{i=1}^r\, m_i \alpha_i
\end{align}
be the highest root of $\frakg$. 
\begin{remark}
	The explicit values of $r$ and the $m_i$'s are:
	\begin{align}
		\begin{array}{|c|c|c|}
			\hline
			\frakg & r & m_1, m_2, m_3, \ldots, m_{r-1}, m_r\\		
			\hline
			A_N & N & 1, \ldots, 1\\
			\hline
			D_N & N & 1, 2, 2, \ldots, 2, 1, 1\\
			\hline
			E_6 & 6 & 1, 2, 3, 2, 1, 2\\
			\hline
			E_7 & 7 & 1, 2, 4, 3, 2, 1, 2\\
			\hline
			E_8 & 8 & 2, 4, 6, 5, 4, 3, 2, 3\\
			\hline
		\end{array}
	\end{align}
\end{remark}

The quotient $X\coloneqq\C^2/G$ has an isolated singularity at the origin. Let $\pi\colon Y\to X$ the minimal resolution of singularities of $X$. Let $C\coloneqq \pi^{-1}(0)$ and we denote by $C_{\mathsf{red}}$ its reduced variety. The irreducible components $C_i$'s of $C$ are isomorphic to $\PP^1$.
\begin{proposition}\label{prop:McKay}
	Let $\calQ=(\calQ_0, \calQ_1)$ be the affine extension of $\calQ^{\mathsf{fin}}$.
	\begin{enumerate}\itemsep0.2cm
		\item \label{Mckay-(finite)} The vertices of $\calQ^{\mathsf{fin}}$ are in canonical bijection with the irreducible components $C_i$ of $C$. Two vertices are joined by an edge if and only if the corresponding components intersect. The intersection is transverse and consists of one point. Moreover, the intersection matrix of the $C_i$'s is equal to the opposite of the Cartan matrix of $\calQ^{\mathsf{fin}}$.
		\item The vertices of $\calQ$ are in bijection with the irreducible representations of $G$.
	\end{enumerate}
\end{proposition}

\begin{remark}
	Note that we have the following equality in $\Pic(Y)$:
	\begin{align}
		C= \sum_{i=1}^r\, m_i C_i\ .
	\end{align}
\end{remark}

The following basic fact will be used repeatedly throughout this paper. 
\begin{lemma}\label{compactdiv} 
	Let $Z\subset Y$ be a proper divisor. Then $Z_{\mathsf{red}}\subseteq C_{\mathsf{red}}$. Moreover, $H_2(Y; \Z)$ is generated over $\Z$ by the irreducible components $C_1, \ldots, C_r$. 
\end{lemma}  
%

We fix a nontrivial diagonal torus $T\subseteq \C^\ast\times\C^\ast \subset \mathsf{GL}(2,\C)$ centralizing $G$. Depending on $G$, we have $T=\C^\ast$ or $T=\C^\ast \times \C^\ast$: for example, in type $A$ both cases are admissible, while in type $D$ only the former. The group $T$ acts on $X$ in the obvious way and the $T$-action on $X$ lifts to an $T$-action on $Y$ such that the map $\pi$ is $T$-equivariant\footnote{One may see this using e.g. the Nakajima quiver varieties interpretation of the map $\pi$.}.

\subsection{Perverse coherent sheaves on the resolution}\label{ss:psheavesres} 

Let $\scrC$ be the so-called \textit{null category}, i.e., the abelian subcategory of $\catCoh(Y)$ consisting of coherent sheaves $\calF$ such that $\R\pi_\ast \calF=0$. Following \cite[\S3.1]{VdB_Flops}, we introduce the following \textit{torsion pair}\footnote{In \textit{loc.cit.} it is denoted by $(\scrT_{-1}, \scrF_{-1})$.} on $\catCoh(Y)$:
\begin{align}
	\scrT&\coloneqq \{\calF\in \catCoh(Y)\, \vert\, \R^1\pi_\ast \calF=0\,\text{ and }\, \Hom(\calF, \scrC)=0\}\ ,\\[2pt]
	\scrF&\coloneqq \{\calF\in \catCoh(Y)\, \vert\, \R^0\pi_\ast \calF=0\}\ .
\end{align}
We denote by $\catP(Y/X) \subset \catDb(\catCoh(Y))$  the heart of the tilted $t$-structure induced by $(\scrT, \scrF)$, which is called the \textit{Bridgeland's perverse $t$-structure}.
\begin{definition}\label{def:perverse}
	We say that an object $E$ of $\catDb(\catCoh(Y))$ is a \textit{perverse coherent sheaf} on $Y$ if $E\in \catP(Y/X)$, i.e., $E$ satisfies the following conditions:
	\begin{enumerate}\itemsep0.2cm
		
		\item \label{item:perverse-(1)} all cohomology sheaves $\calH^i(E)$ are zero except for $i=-1, 0$,
		
		\item \label{item:perverse-(2)} $\calH^{-1}(E)\in \scrF$, and
		
		\item \label{item:perverse-(3)} $\calH^0(E)\in \scrT$. 
		
	\end{enumerate} 
\end{definition}

Consider now the abelian subcategory $\catCohps(Y)$ of $\catCoh(Y)$ consisting of properly supported coherent sheaves. Then the pair $(\scrT\cap \catCohps(Y), \scrF\cap \catCohps(Y))$ is a torsion pair of $\catCohps(Y)$. We denote by $\catPps(Y/X)$ the corresponding tilted heart in $\catDb(\catCohps(Y))\simeq \catDbps(\catCoh(Y))$, where the latter is the subcategory of $\catDb(\catCoh(Y))$ consisting of complexes with properly supported cohomology. $\catPps(Y/X)$ is the abelian subcategory of $\catP(Y/X)$ consisting of objects with proper support. 

\begin{remark}\label{rem:yoshioka}
	As described in \cite[\S2.1]{YoshiokaPerverse} (and revisited in Appendix~\ref{s:yoshioka}), one has the following characterization:
	\begin{align}
		\scrT\cap \catCohps(Y) = &\{\calF\in \catCohps(Y)\, \vert\, \Hom(\calF, \scrO_{C_i}(-1))=0 \text{ for } i=1, \ldots, e\} \ ,\\[2pt]
		\scrF\cap \catCohps(Y)  = &\{\calF\in \catCohps(Y)\, \vert\, \calF \text{ is a successive extension of subsheaves of the }\\[2pt] 
		&\scrO_{C_i}(-1)\text{'s for }i=1, \ldots, e\} \ . 	
	\end{align}
	As a consequence, we see that $\calH^{-1}(E)$ is a pure one-dimensional coherent sheaf for $E\in \catPps(Y/X)$.
\end{remark}

Following \cite[Lemma~3.4.4]{VdB_Flops}, let $D_i$ be the unique Cartier divisor of $Y$ such that $D_i \cdot C_j=\delta_{i, j}$. Let
\begin{align}\label{eq:tiltlb}
	L_i \coloneqq \scrO_Y(D_i)
\end{align}
be the corresponding line bundle, for $i=1, \ldots, r$. Let $\calE_i$ be the locally free sheaf obtained as the extension
\begin{align}\label{eq:projgenZN_components}
	0\longrightarrow \scrO_Y^{m_i-1}\longrightarrow \calE_i\longrightarrow L_i \longrightarrow 0
\end{align}
associated to a minimal set of $m_i - 1$ generators of $H^1(Y, L_i^{-1})$. 

\begin{notation}
	Define $\calE_{r+1} \coloneqq \scrO_Y$ and set $m_{r+1}\coloneqq 1$.
\end{notation}

\begin{remark}
	By construction, the $\calE_i$'s are the locally free sheaves defined by Gonzalez-Sprinberg and Verdier \cite{GSV_McKay}. As shown in \textit{loc.cit.}, they generate, over $\Z$, the Grothendieck group of locally free sheaves on $Y$. 
\end{remark}
By \cite[Theorem~3.5.5]{VdB_Flops}, each $\calE_i$ is an indecomposable projective object in $\catP(Y/X)$ and the direct sum 
\begin{align}\label{eq:projgenZN}
	\calP \coloneqq \bigoplus_{i=1}^{r+1}\, \calE_i
\end{align}
is a projective object and a generator of $\catP(Y/X)$. It is well known that $A\coloneqq \End( \calP )$ is isomorphic to the preprojective algebra $\Pi_{\calQ}$ of the affine Dynkin diagram $\calQ$ corresponding to $G$ (see e.g. \cite{CH_DeformedPreprojectiveAlgebras, Wemyss_McKay}). By combining this with \cite[Corollary~3.2.8]{VdB_Flops}, the functors 
\begin{align}\label{eq:dercateqB} 
	\tau\coloneqq \R\Hom(\calP,-)\colon \catDb(\catCoh(Y)) \longrightarrow \catDb(\catMod(\Pi_{\calQ}))\\[4pt]
	\tau\coloneqq \R\Hom(\calP,-)\colon \catDbps(\catCoh(Y)) \longrightarrow \catDbps(\catMod(\Pi_{\calQ})) 
\end{align}	
and
\begin{align}
	(-) \otimes^{\LL}_{\Pi_{\calQ}} \calP \colon \catDb(\catMod(\Pi_{\calQ})) &\longrightarrow \catDb(\catCoh(Y)) \\[4pt]
	\quad (-) \otimes^{\LL}_{\Pi_{\calQ}} \calP \colon \catDbps(\catMod(\Pi_{\calQ})) &\longrightarrow \catDbps(\catCoh(Y)) 
\end{align}
determine an equivalence of derived categories. Here, $\catMod(\Pi_{\calQ})$ denotes the abelian category of finitely generated right $\Pi_\calQ$-modules\footnote{As usual, we call a \textit{representation of $\Pi_\calQ$} a right $\Pi_\calQ$-module.}. This restricts to an equivalence of hearts
\begin{align}\label{eq:cateqA} 
	\catP(Y/X) \xrightarrow{\sim}  \catMod(\Pi_{\calQ})\ ,
\end{align}
as well as  
\begin{align}\label{eq:cateqB}
	\catPps(Y/X) \xrightarrow{\sim}  \mathsf{mod}(\Pi_{\calQ})\ ,
\end{align}
where $\mathsf{mod}(\Pi_\calQ)$ is the subcategory consisting of $\Pi_{\calQ}$-modules which are finite dimensional as complex vector spaces. 
\begin{remark}
	Assume that the torus $T$ acts on a representation $M$ of $\Pi_\calQ$ in the following way:
	\begin{align}\label{action1}
		(q_1,q_2)\cdot x_h\coloneqq q_hx_h		
	\end{align}
	where $x_h$ denotes the linear map in $M$ corresponding to an edge $h$ belonging to the double quiver $\overline{\calQ}$, $q_h\coloneqq q_1$ if $h\in \calQ_1$ and $q_h\coloneqq q_2$, otherwise. This action lifts to an action at the level of $\catMod(\Pi_{\calQ})$ and its bounded derived category. Therefore, $\tau$ and the equivalences \eqref{eq:cateqA} and \eqref{eq:cateqB} are equivariant with respect to the action of $T$.
\end{remark}

\begin{remark}
	Note that the equivalence \eqref{eq:dercateqB} induces an isomorphism at the level of Grothendieck groups: $\tau\colon \sfK_0(\catCoh(Y))\simeq \sfK_0(\catMod(\Pi_{\calQ}))$ and $\tau\colon \sfK_0(\catCohps(Y))\simeq \sfK_0(\mathsf{mod}(\Pi_{\calQ}))$. In particular, the Euler forms are identified. Recall that the Euler form on $\sfK_0(\mathsf{mod}(\Pi_{\calQ}))\simeq \Z^{\calQ_0}$ has the form:
	\begin{align}\label{eq:pairing}
		\langle\bfd, \bfe\rangle\coloneqq \sum_{\imath\in\calQ_0}\, d_\imath e_\imath -\sum_{a\in \calQ_1}\, d_{\sfs(a)}e_{\sft(a)}\ ,
	\end{align}
	while the symmetrized Euler form is $(\bfd, \bfe)\coloneqq \langle \bfd, \bfe\rangle + \langle \bfe, \bfd\rangle$.
\end{remark}

\begin{remark}\label{rem:simple}
	By \cite[Proposition~3.5.7]{VdB_Flops} the simple modules $\calS_1, \ldots, \calS_{r+1}$ associated to the nodes of the affine quiver correspond to the \textit{spherical objects} 
\begin{align}\label{eq:sphericalobj}
	\calS_{r+1} \simeq \tau(\scrO_{C})\quad\text{and}\quad	\calS_i \simeq \tau(\scrO_{C_i}(-1)[1]) \ ,
\end{align}
for $1\leq i \leq r$. Set
\begin{align}
	\calI_j\coloneqq \begin{cases}
		\scrO_{C} & \text{for } j=r+1\ ,\\
		\scrO_{C_j}(-1)[1] & \text{for } j=1, \ldots, r\ .
	\end{cases}
\end{align}
Then, they satisfy the orthogonality relations 
\begin{align}\label{eq:orthrel} 
	\R\Hom(\calE_i, \calI_j) = \delta_{i,j}\, {\underline \C}\ , 
\end{align}
for $1\leq i, j \leq r+1$.
\end{remark}

Given an object $E\in \catPps(Y/X)$, the categorical equivalence \eqref{eq:dercateqB} implies that  
$\R\Hom(\calE_k, E)$ is a one-term complex of amplitude $[0, \, 0]$, which we denote by $V_k$, for any $k=1, \ldots, r+1$. Thus, an object $E$ of $\catPps(Y/X)$ is mapped to a representation $M$ of $\Pi_{\calQ}$ with underlying $\Z_{r+1}$-graded vector space 
\begin{align}\label{eq:vectorspace}
	V\coloneqq \bigoplus_{k=1}^{r+1}\ V_k\ , 
\end{align}
where the $k$-th summand has degree $k$. We denote by $d_k(E)$ the dimension of $V_k$ for $1\leq k \leq r+1$. Moreover, since $E$ is properly supported, one has
\begin{align}\label{eq:ch1}
	\sfch_1(E)\coloneqq -\sfch_1(\calH^{-1}(E))+ \sfch_1(\calH^{0}(E))=\sum_{i=1}^{r}\, n_i C_i\quad\text{and}\quad \chi(\scrO_Y, E)=n\in \Z\ ,
\end{align}
for some $n_i\in \Z$, $i=1, \ldots, r$.
\begin{lemma}\label{dimvectlemm} 
	We have 
	\begin{align}\label{eq:dimvect} 
		m_k d_{r+1}(E)-d_k(E)=n_k\ ,
	\end{align}
	for any $1\leq k \leq r$ and 
	\begin{align}\label{eq:dNchi}
		d_{r+1}(E) =n \ .
	\end{align}
\end{lemma}
\begin{proof}
	By \cite[Lemma~3.2.3]{VdB_Flops}, for any object $E\in \catPps(Y/X)$, one has
	\begin{align}
		\Ext^i(\calE_k,E)=0 
	\end{align}
	for all $1\leq k \leq r+1$ and $i>0$, where the objects $\calE_k$ are introduced in \eqref{eq:projgenZN_components} and $\calE_{r+1}\coloneqq \scrO_Y$. Therefore 
	\begin{align}
		m_k d_{r+1}(E)-d_k(E)=m_k\chi(\scrO_Y, E)-\chi(\calE_k, E)\ .
	\end{align}
	Since $E$ is properly supported, one can use the Riemann-Roch theorem (cf.\ \cite[\S1.1, Formula~(1.60)]{BBHR_FM}) to compute 
	\begin{align}
		\chi(\calE_k, E)=m_k\chi(\scrO_Y, E)- \sfch_1(E) \cdot \sfch_1(\calE_k)\ .
	\end{align}
	The assertion follows by using the relations $\sfch_1(\calE_k) \cdot C_i = \delta_{k, i}$ for $1\leq i, k\leq r$.
\end{proof}
\begin{remark}\label{rem:zero-dimensional}
	Let $E\in \catPps(Y/X)$ be a perverse coherent sheaf whose support\footnote{Recall that the support of an object in $\catDb(\catCoh(Y))$ is the union of the supports of its cohomology sheaves.} is zero-dimensional. Thus both $\calH^{-1}(E)$ and $\calH^0(E)$ are zero-dimensional. On the other hand, by the first condition in Definition~\ref {def:perverse}-\eqref{item:perverse-(2)} we get $\calH^{-1}(E)=0$, therefore $E\simeq \calH^0(E)$. Since $\sfch_1(E)=0$, by Equation \eqref{eq:dimvect} the dimension vector of $E$ is of the form $n\cdot (m_1, \ldots, m_r, 1)$, where $n=\chi(\calH^0(E))$. Note that $(m_1, \ldots, m_r, 1)$ is the indivisible imaginary root.
\end{remark}

\subsection{Semistable perverse coherent sheaves and one-dimensional sheaves}\label{s:semistable}

In this section we will establish a relation between one-dimensional sheaves and semistable perverse coherent sheaves on the minimal resolution.

In what follows, we denote by $\catCohps(Y)$ the abelian category of coherent sheaves on $Y$ with proper support. Moreover, we denote by $E$ an object in $\catPps(Y/X)$, while by calligraphic $\calF$ an object in $\catCohps(Y)$, i.e., a coherent sheaf on $Y$ with proper support.

\subsubsection{Stability conditions for perverse coherent sheaves}\label{sheafwalls} 

Fix $\zeta\in \Q^{r+1}$. In what follows, we shall call $\zeta$ a \textit{stability condition}. 
\begin{definition}
	The \textit{($\zeta$-)degree} of a dimension vector $\bfd \in\N^{r+1}\smallsetminus \{0\}$ as
	\begin{align}
		\deg_\zeta(\bfd)\coloneqq \sum_{i=1}^{r+1} \zeta_i d_i \ .
	\end{align}
	The \textit{($\zeta$-)slope} of $\bfd$ is
	\begin{align}
		\mathsf{slope}_\zeta(\bfd)\coloneqq \deg_\zeta(\bfd)/\sum_{i=1}^{r+1} d_i\ .
	\end{align}
	
	Let $M$ be a finite-dimensional representation of $\Pi_\calQ$. Its \textit{degree} (resp.\ \textit{slope}) is the degree (resp.\ slope) of its dimension vector.
\end{definition}

\begin{definition}
	A nonzero finite-dimensional representation $M$ of $\Pi_\calQ$ is \textit{$\zeta$-semistable} if for any subrepresentation $N$ of $M$ we have
	\begin{align}
		\frac{\deg_\zeta(N)}{\sum_{\imath\in \calQ_0}\, \dim N_\imath}\leq \frac{\deg_\zeta(M)}{\sum_{\imath\in \calQ_0}\, \dim M_\imath}\ . 
	\end{align}
	A nonzero representation $M$ is called \textit{$\zeta$-stable} if the strict inequality holds for any nonzero proper subrepresentation $N \subset M$.
\end{definition}

\begin{definition}\label{stabpc} 
We say that $E\in \catPps(Y/X)$ is \textit{$\zeta$-(semi)stable} if and only if the corresponding representation of $\Pi_{\calQ}$ is $\zeta$-(semi)stable. We will also write 
\begin{align}
	\deg_\zeta (E) \coloneqq \sum_{i=1}^{r+1} \zeta_i d_i(E) \ .
\end{align}
\end{definition}


The main goal of this section consists of proving some structure results for $\zeta$-semistable perverse sheaves, where $\zeta$ is a suitable stability condition satisfying certain conditions, that we will define in what follows. First note the following consequence of Lemma~\ref {dimvectlemm}. 
\begin{corollary}\label{thetadegcor} 
	Let $\zeta\in \Q^{r+1}$ be a stability condition and let $E$ be an object of $\catPps(Y/X)$. Let $\omega \in \Pic_\Q(Y)\coloneqq \Pic(Y)\otimes_\Z\Q$ be such that	\begin{align}
			\zeta_i= \langle \omega, C_i\rangle 
	\end{align}	
	for $i=1, \ldots, r$. Then one has 
	\begin{align}\label{eq:zetadegA}
		\deg_\zeta(E) = d_{r+1}(E)  \sum_{i=1}^{r+1} m_i\, \zeta_i - \omega \cdot \sfch_1(E)\ .
	\end{align}
	Moreover, 
	\begin{align}\label{eq:dN}
		d_{r+1}(E) =  \dim\, H^0(\calH^0(E)) + \dim\, H^1(\calH^{-1}(E))\ .
	\end{align} 
\end{corollary}

\begin{definition}
	Let $\omega \in \Pic_\Q(Y)\coloneqq \Pic(Y)\otimes_\Z\Q$. We say that $\omega$ is a \textit{polarization}\footnote{This definition is equivalent to the standard one of ampleness because of the definition of the the $D_i$'s.} if 	
	\begin{align}
		\zeta_i= \langle \omega, C_i\rangle >0
	\end{align}	
	for $i=1, \ldots, r$.
\end{definition} 

\begin{rem}\label{rem:sheafchi}
	Let $E\in \catPps(Y/X)$. Then, by combining \eqref{eq:dN} and \eqref{eq:dNchi}, we obtain that $\chi(E)\geq 0$. \hfill $\triangle$
\end{rem}

We introduce the following subcategories of $\catPps(Y/X)$:
\begin{itemize}\itemsep0.2cm
	\item $\catPps^0(Y/X)$ is the full subcategory of $\catPps(Y/X)$ consisting of perverse coherent sheaves $E$ with $\calH^{-1}(E)=0$;
	\item $\catPps^{-1}(Y/X)$ is the full subcategory of $\catPps(Y/X)$ consisting of perverse coherent sheaves $E$ with $\calH^{0}(E)=0$.
\end{itemize}

\begin{definition}
	We say that $E\in \catDb(\catCoh(Y))$ is a \textit{sheaf} if $\calH^i(E)\simeq 0$ for $i\neq 0$.
\end{definition}

\begin{remark}
	Let $E\in \catPps(Y/X)$. Note that $E\in \catPps^0(Y/X)$ if and only $E$ is a sheaf, while $E\in \catPps^{-1}(Y/X)$ if and only if $E[-1]$ is a sheaf. 
%
\end{remark}

\begin{lemma}\label{exseqlemm}
	\hfill
	\begin{itemize}\itemsep0.2cm
		\item Let $E\in \catPps^0(Y/X)$ and $\calF\coloneqq \calH^0(E)$. Then, any quotient $\calF\twoheadrightarrow \calF''$ in $\catCohps(Y)$ corresponds to an object in $\catPps^0(Y/X)$.
		\item Let $E\in \catPps^{-1}(Y/X)$ and $\calF\coloneqq \calH^{-1}(E)$. Then any subsheaf $\calF'\hookrightarrow \calF$ in $\catCohps(Y)$ corresponds to an object in $\catPps^{-1}(Y/X)$.
	\end{itemize}
\end{lemma} 

\begin{proof}
	Note that $\catPps^{-1}(Y/X)[1]=\scrF\cap \catCohps(Y)$, while $\catPps^0(Y/X)=\scrT\cap \catCohps(Y)$. Hence, the pair $(\catPps^0(Y/X)), \catPps^{-1}(Y/X)[1])$ is a torsion pair of $\catCohps(Y)$. Thus, the result follows from Lemma~\ref{lem:torsion_pairs_extensions_mono_epi}. 
	\end{proof}

Let $\zeta\in \Q^{r+1}$ be a stability condition. Set 
	\begin{align}
	\zeta^\perp \coloneqq \{0\}\cup \big\{\bfd \in \N^{r+1}\setminus \{0\}\, \vert\, \sum_{\imath}\, d_\imath \zeta_\imath = 0\big\} \ .
\end{align}
\begin{proposition}\label{sheafwallA} 
	Let $\zeta\in \Q^{r+1}$ be a stability condition that satisfies the inequality 
	\begin{align}\label{eq:thetaineqA}
		\zeta_{i}>0
	\end{align}
	for $1\leq i\leq r$. Let $E\in \catPps(Y/X)$ be a $\zeta$-semistable object with associated dimension vector $\bfd\in \zeta^\perp$. Then
	\begin{align}
		\begin{cases}
			E\in \catPps^0(Y/X) & \text{if } 	\sum_{i=1}^{r+1}\, m_i\, \zeta_i\geq 0\ , \\[4pt]
			E\in \catPps^{-1}(Y/X) & \text{if } 	\sum_{i=1}^{r+1}\, m_i\, \zeta_i< 0 \ .
		\end{cases}
	\end{align}
	Set $\calF\coloneqq \calH^i(E)$ if $E\in \catPps^i(Y/X)$ for $i=-1, 0$. Then, we have the following two cases:
	\begin{enumerate}\itemsep0.2cm
		\item if $\sum_{i=1}^{r+1}\, m_i\, \zeta_i$ is different from zero, $\calF$ is a purely one-dimensional sheaf;
		\item if $\sum_{i=1}^{r+1}\, m_i\, \zeta_i= 0$, $\calF$ is a zero-dimensional sheaf.
	\end{enumerate} 
\end{proposition}

\begin{proof}
	First note that under the given assumptions, $d_{r+1}(E)\neq 0$ since $\deg_\zeta(E)=0$. Indeed, otherwise we must have $d_i(E)=0$ for $i=1, \ldots, r$ because of condition \eqref{eq:thetaineqA}: thus, $E$ must be the zero object. Therefore, let us assume that $d_{r+1}(E)\neq 0$.
	
	Let us consider the case: 
	\begin{align}\label{eq:thetaineqB}
	\sum_{i=1}^{r+1}\, m_i\, \zeta_i\geq 0\ .
	\end{align}	
	Let $0\neq E' \subsetneq E$ be a subobject in $\catPps(Y/X)$. Since, by assumption, $E$ is $\zeta$-semistable with 
	$\deg_\zeta(E)=0$, one has 
	\begin{align}
		\deg_\zeta(E')\leq 0\ .
	\end{align}
	In particular this holds for $E'=\calH^{-1}(E)[1]$. Then equation \eqref{eq:zetadegA} for $E'$ yields 
	\begin{align}
		\dim\, H^1(\calH^{-1}(E))\,  \sum_{i=1}^{r+1} m_i\, \zeta_i + \omega \cdot \ch_1(\calH^{-1}(E)) \leq 0\ .
	\end{align}
	By construction, $\omega$ is a polarization on $Y$, hence $\omega \cdot \ch_1(\calH^{-1}(E))\geq 0$. Then this yields 
	\begin{align}
		\begin{cases}
			\dim H^1(\calH^{-1}(E)) =0\text{ and }  \ch_1(\calH^{-1}(E))=0 & \text{if }    
			\displaystyle\sum_{i=1}^{r+1} m_i \zeta_i > 0\ ,  \\[12pt]
			\ch_1(\calH^{-1}(E))=0 & \text{if }  \displaystyle\sum_{i=1}^{r+1} m_i \zeta_i = 0\ . 
		\end{cases}
	\end{align}
	In both cases, because of the vanishing of the first Chern class, one has that $\calH^{-1}(E)$ is a zero dimensional sheaf on $Y$. Then the defining properties of $\catPps(Y/X)$ (cf.\ Definition~\ref {def:perverse}-\eqref{item:perverse-(2)}) imply that $\calH^{-1}(E)$ is the zero sheaf (cf.\ Remark~\ref{rem:zero-dimensional}).
	
	Now, suppose that the inequality \eqref{eq:thetaineqB} is strict. Let $\calF'\subset \calF$ be a zero dimensional sheaf in $\catCohps(Y)$. Thanks to Lemma~\ref{exseqlemm}, we can consider the object $E''\coloneqq \calF/\calF'$, together with the morphism $E\to E'' \in \catPps(Y/X)$. Set $E'\coloneqq \mathsf{Cone}(E\to E'')[-1]$. By using equation~\eqref{eq:zetadegA}, we get that 
	\begin{align}
		\deg_\zeta(E') = \dim\, H^0(E')  \sum_{i=1}^{r+1} m_i \zeta_{i}\ .
	\end{align}
	Since $E$ is $\zeta$-semistable of $\zeta$-degree zero, we get $\deg_\zeta(E')\leq 0$, hence $E'=0$ and therefore $\calF'$ is the zero sheaf. Finally, suppose that
	\begin{align}
		\sum_{i=1}^{r+1}\, m_i\, \zeta_i= 0\ .
	\end{align}	
	Then 
	\begin{align}
		0=\deg_\zeta(E) = -\omega \cdot \ch_1(E) \ .
	\end{align}
	Since $E$ is a sheaf and $\omega$ is a polarization on $Y$, this implies $\ch_1(E)=0$. Thus, $\calF$ is zero-dimensional.	
	
	Now, consider the case
	\begin{align}
		\sum_{i=1}^{r+1}\, m_i\, \zeta_i< 0\ .
	\end{align}	
	Let $E\twoheadrightarrow E''$ be a quotient object of $E$ in $\catPps(Y/X)$. Since $E$ is $\zeta$-semistable with 
	$\deg_\zeta(E)=0$, one has 
	\begin{align}
		\deg_\zeta(E'')\geq 0\ .
	\end{align}
	In particular this holds for $E''=\calH^{0}(E)$. Then equation \eqref{eq:zetadegA} for $E''$ yields 
	\begin{align}
		\dim\, H^0(E'')\,  \sum_{i=1}^{r+1} m_i\, \zeta_i - \omega \cdot \ch_1(E'') \geq 0\ .
	\end{align}
Since $\omega$ is a polarization, this yields
\begin{align}
	\dim\, H^0(\calH^{0}(E))=0 \quad\text{and}\quad \ch_1(\calH^{0}(E))=0\ . 
\end{align}
Thus, $\calH^{0}(E)=0$ and therefore $E\in \catPps^{-1}(Y/X)$. We are left to prove that $\calF\coloneqq \calH^{-1}(E)$ is a purely one-dimensional sheaf. Let $\calF'$ be a zero dimensional subsheaf of $\calF$ in $\catCohps(Y)$. Thanks to Lemma~\ref{exseqlemm}, $E'\coloneqq \calF'[1]$ is a subobject of $E$ in $\catPps(Y/X)$. By Definition~\ref {def:perverse}-\eqref{item:perverse-(2)}, we get $\R^0 \pi_\ast \calH^{-1}(E')=0$, thus $\calF'$ must be the zero sheaf.
\end{proof}

Let $\omega$ be a polarization on $Y$.  The \textit{$\omega$-slope} of a coherent sheaf $E\in \catCohps(Y)$ is defined as 
\begin{align}\label{eq:slope-omega}
	\mu_{\omega}(E)\coloneqq \begin{cases}
		\displaystyle \frac{\chi(E)}{\sfch_1(E)\cdot \omega} & \text{if } \sfch_1(E)\cdot \omega\neq 0 \ , \\[4pt]
		 \infty & \text{otherwise}\ .
	\end{cases}\ .
\end{align}
If $\omega$ is of the form
\begin{align}
	\omega = \sum_{i=1}^{r}\,\zeta_i D_i  \ ,
\end{align}
with $\zeta_i>0$ for $i=1, \ldots, r$, Formula~\eqref{eq:slope-omega} reduces to
\begin{align}
	\mu_{\omega}(E) =\frac{\chi(E)}{\sum_{k=1}^{r}\, n_k \zeta_k} \ ,
\end{align}
when $\sfch_1(E)\cdot \omega\neq 0$ and $\sfch_1(E)=\sum_{i=1}^r n_i C_i$. 

\begin{remark}
	Let $E\in\catPps^0(Y/X)$ be a sheaf. Set $\calF\coloneqq \calH^0(E)$. Then
	\begin{align}
		\chi(\calF)= d_{r+1}(E) \quad \text{and} \quad \sfch_1(\calF)=\sfch_1(E)=\sum_{k=1}^r (m_k d_{r+1}(E)-d_k(E))C_k\ .
	\end{align}

	Now, let $E\in\catPps^{-1}(Y/X)$ be a sheaf. Set $\calF\coloneqq \calH^{-1}(E)$. Then $\calF$ is quasi-isomorphic to $E[-1]$. In this case,
	\begin{align}
		\chi(\calF)= - d_{r+1}(E) \quad \text{and} \quad \sfch_1(\calF)=-\sfch_1(E)=\sum_{k=1}^r (d_k(E)-m_k d_{r+1}(E))C_k\ .
	\end{align}
	
	Let $\zeta\in \Q^{r+1}$ be a stability condition satisfying the inequality \eqref{eq:thetaineqA} and let $\omega \coloneqq \sum_{i=1}^{r}\,\zeta_i D_i$. Now, assume that $\ch_1(\calF)\cdot \omega\neq 0$ for both cases above. In addition, if $d_{r+1}(E) \neq 0$, we get
		\begin{align}
			\mu_\omega(\calF)=\frac{1}{\sum_{k=1}^{r} \zeta_k\, \big(m_k-\frac{d_k(E)}{d_{r+1}(E)}\big)}\ .
		\end{align}
		If the dimension vector $\bfd$ of $E$ belongs to $\zeta^\perp$, i.e., $\deg_\zeta(E)=0$, we get
		\begin{align}
			\mu_\omega(\calF)=\frac{1}{\sum_{k=1}^{r+1} m_k\, \zeta_k}\ .
		\end{align}		
		If $d_{r+1}(E)=0$, then $\mu_\omega(\calF)=0$.
\end{remark}

Fix $\zeta\in \Q^{r+1}$ a stability condition. We will show that under certain conditions on $\zeta$, we can define a $\Q$-polarization $\omega$ such that all objects in $\catPps(Y/X)$, which are sheaves and $\zeta$-semistable, are $\omega$-semistable as well, seen as objects of $\catCohps(Y)$.

\begin{proposition}\label{sheafwallcorA}
	Let $\zeta\in \Q^{r+1}$ be a stability condition satisfying the inequality \eqref{eq:thetaineqA} and let $\omega \coloneqq \sum_{i=1}^{r}\,\zeta_i D_i$. Let $E$ be a $\zeta$-(semi)stable object of $\catPps(Y/X)$ with dimension vector $\bfd\in \zeta^\perp$. Set
	\begin{align}
		\calF\coloneqq \begin{cases}
			\calH^0(E) & \text{if}\quad \displaystyle \sum_{i=1}^{r+1} m_i\, \zeta_i \geq 0\ , \\[15pt]
			\calH^{-1}(E) & \text{if}\quad \displaystyle \sum_{i=1}^{r+1} m_i\, \zeta_i < 0 \ .
		\end{cases}
	\end{align}
	Then $\calF$ is an $\omega$-(semi)stable coherent sheaf on $Y$, with slope
	\begin{align}
		\mu_\omega(\calF)=\frac{1}{\sum_{i=1}^{r+1}m_i\, \zeta_i} \ ,
	\end{align}
	if $\sum_{i=1}^{r+1}m_i\, \zeta_i\neq 0$, otherwise $\mu_\omega(\calF)=\infty$.
\end{proposition} 
\begin{proof}
	Proposition~\ref {sheafwallA} shows that $\calF$ is a coherent sheaf with proper support. If $\sum_{i=1}^{r+1}m_i\, \zeta_i$ vanishes, $\calF$ is a zero-dimensional sheaf. Hence $\calF$ is semistable with respect to any polarization $\omega$. 
	
	Suppose that $\sum_{i=1}^{r+1}m_i\, \zeta_i >0$. Let $\calF\twoheadrightarrow \calF''$ be a quotient in $\catCohps(Y)$, where $\calF''$ is a pure one-dimensional sheaf. Lemma~\ref {exseqlemm} shows that $E''\coloneqq \calF''$ belongs to $\catPps(Y/X)$, hence it satisfies\footnote{Here we use the notation $(\geq)$ and $(\leq)$ following \cite[Notation~1.2.5]{Huybrechts_Lehn_Moduli_2010}. For example, if in a statement the word ``(semi)stable'' appears together with relation signs ``$(\leq)$'', the statement encodes in fact two assertions: one about semistable sheaves and relation signs ``$\leq$'' and one about stable sheaves and relation signs ``$<$'', respectively.} 
	\begin{align}\label{eq:inequality}
		\deg_\zeta (E'')\ (\geq)\ 0\ .
	\end{align}
	By Formula~\eqref{eq:zetadegA}, we get
	\begin{align}
		\mu_\omega(\calF'')=\frac{1}{\sum_{i=1}^{r+1} m_i\, \zeta_i}\, \Big(\frac{\deg_\zeta(E'')}{\omega\cdot \sfch_1(E'')} +1\Big)\ .
	\end{align}
	Thus, the inequality \eqref{eq:inequality} is equivalent to 
	\begin{align}
		\mu_{\omega}(\calF'')\ (\geq)\ \frac{1}{\sum_{i=1}^{r+1} m_i
			\zeta_{i}} = \mu_{\omega}(\calF)\ . 
	\end{align}
	Hence $\calF$ is $\omega$-(semi)stable. 	
	
	Finally, assume that $\sum_{i=1}^{r+1}m_i\, \zeta_i <0$. Let $\calF'\hookrightarrow \calF$ be a subsheaf in $\catCohps(Y)$. Lemma~\ref {exseqlemm} shows that $E'\coloneqq \calF''[1]$ belongs to $\catPps(Y/X)$, hence it satisfies 
	\begin{align}\label{eq:inequality2}
		\deg_\zeta (E')\ (\leq)\ 0\ .
	\end{align}
	By arguing as before (and noticing that $\omega\cdot \sfch_1(E)<0$), we get 
	\begin{align}
	\mu_{\omega}(\calF'')\ (\leq)\ \frac{1}{\sum_{i=1}^{r+1} m_i
		\zeta_{i}} = \mu_{\omega}(\calF)\ . 
\end{align}
\end{proof}

\subsubsection{$\omega$-semistable sheaves on $Y$}

Now, we prove a converse of Proposition~\ref{sheafwallcorA}.

Let $\omega$ be a $\Q$-polarization of $Y$. Note that if a coherent sheaf $\calF\in \catCohps(Y)$ is $\omega$-stable, then it is pure. Below, we shall say that an $\omega$-stable sheaf is one-dimensional if and only if it has nontrivial one dimensional support, i.e., the zero sheaf is excluded. Moreover a curve on $Y$ will be a closed subscheme of pure dimension one. 

\begin{proposition}\label{finitewallF} 
	Let $\calF$ be an $\omega$-semistable properly supported pure coherent sheaf on $Y$ of dimension one with $\mu_\omega(\calF)>0$. Then, the one-term complex $E\coloneqq \calF$ of amplitude $[0,0]$ belongs to $\catPps(Y/X)$. Moreover, it is $\zeta$-semistable of degree zero, where
	\begin{align}
		\zeta_i&\coloneqq \langle \omega, C_i\rangle \quad\text{for}\quad i=1, \ldots, r\ ,\\[4pt]
		\zeta_{r+1}& \coloneqq \frac{1}{\mu_\omega(\calF)} - \sum_{i=1}^{r}\, m_i \langle \omega, C_i\rangle\ ,
	\end{align}
	and $\zeta\coloneqq (\zeta_1, \ldots, \zeta_{r+1})$.
\end{proposition}

\begin{proof}
	We assume that $\calF$ is stable. The strictly semistable case follows easily using Jordan-H\"older filtrations. 
	
	Let us denote by $E$ the one-term complex $\calF$ in $\catDb(\catCoh(Y))$. Since $\calF$ and $\scrO_{C_i}(-1)$ are $\omega$-stable sheaves such that $\mu_\omega(\calF)>0=\mu_\omega(\scrO_{C_i}(-1))$, we get $\Hom(\calF, \scrO_{C_i}(-1))=0$ for any $i=1, \ldots, r$. Thus, by Remark~\ref{rem:yoshioka}, $E\in \catPps(Y/X)$.
	
	Now, we prove that $E$ is $\zeta$-stable as a perverse coherent sheaf. First, thanks to Formula~\eqref{eq:zetadegA} the relation $\deg_\zeta(E)=0$ is immediate. Let $E\twoheadrightarrow G$ be a quotient in $\catPps(Y/X)$, not isomorphic to $E$. Using identity~\eqref{eq:zetadegA}, one obtains
	\begin{align}
		\deg_\zeta(G) =& d_{r+1}(G) \sum_{i=1}^{r+1} m_i\, \zeta_i + \omega \cdot 
		(\ch_1(\calH^{-1}(G)) - \ch_1(\calH^0(G)))\\
		=& \big( \dim\, H^0(\calH^0(G)) +  \dim\, H^1(\calH^{-1}(G)) \big)\sum_{i=1}^{r+1} m_i\, \zeta_i + \omega \cdot 
		(\ch_1(\calH^{-1}(G)) - \ch_1(\calH^0(G)))\ ,
	\end{align}
	where
	\begin{align}
		\sum_{i=1}^{r+1}m_i\zeta_i = \frac{1}{\mu_\omega(\calF)} >0\ .
	\end{align}
	This further implies
	\begin{align}
		\deg_\zeta(G)  \geq \dim\, H^0(\CH^0(G)) \sum_{i=1}^{r+1} m_i\, \zeta_i - \omega\cdot \ch_1(\calH^0(G))\ .
	\end{align}
	Since $G$ is a perverse coherent sheaf, $H^1(\calH^0(G))=0$, hence
	${\rm dim}\, H^0(\calH^0(G)) = \chi(\calH^0(G))$. 
	Therefore the previous inequality yields 
	\begin{align}
		\deg_\zeta(G) & \geq   \mu_\omega(\calF)^{-1}\chi(\calH^0(G)) - \omega\cdot \ch_1(\calH^0(G))\\
		& =   (\omega\cdot \ch_1(\calH^0(G)))\left(\mu_\omega(\calH^0(G))\mu_\omega(\calF)^{-1} -1\right)>0=\deg_\zeta(E) \ ,
	\end{align}
	where last inequality follows from the fact that $\calH^0(G)$ is a quotient of $\calF$ and $\calF$ is $\omega$-stable. 
\end{proof}

\begin{remark}
	Note that the stability condition $\zeta$ defined in the proposition above is such that
	\begin{align}
		\sum_{i=1}^{r+1}m_i\, \zeta_i =\frac{1}{\mu_\omega(\calF)}>0\ .
	\end{align}
\end{remark}

By analogous arguments as those in the proof of Proposition~\ref{finitewallF}, one obtain the following result for zero-dimensional sheaves.
\begin{proposition}\label{zerosheaves} 
	Let $\zeta \in \Q^{r+1}$ be a stability condition such that 
	\begin{align}
		\sum_{i=1}^{r+1} m_i\, \zeta_i =0\ .
	\end{align}
	Let $\calF$ be a zero-dimensional sheaf on $Y$. Then, the one-term complex $E\coloneqq \calF$ of amplitude $[0,0]$ belongs to $\catPps(Y/X)$. Moreover, it is $\zeta$-semistable of degree zero.
\end{proposition}

We finish this section with a characterization of the first Chern class of properly supported stable sheaves.\footnote{A similar characterization is used in \cite[\S2.2, Lemma~4]{IUU_Stability}.}
\begin{proposition}\label{finitewallA} 
	Let $\omega$ be a $\Q$-polarization of $Y$.  Let $\calF$ be an $\omega$-stable one-dimensional sheaf on $Y$ with proper support. Then there exist a (connected) subquiver $\calQ(\calF)=(\calQ(\calF)_0, \calQ(\calF)_1)$ of the type ADE quiver $\calQ^{\mathsf{fin}}$ and a root $\alpha_\calF$ for the Lie algebra $\frakg_{\calQ(\calF)}$ of the form
	\begin{align}\label{eq:rootalphaF}
		\alpha_\calF\coloneqq \sum_{\imath\in \calQ(\calF)_0}s_\imath\alpha_\imath \ ,
	\end{align}
	with $s_\imath>0$ for any vertex $\imath$ of $\calQ(\calF)$, such that
	\begin{align}\label{eq:stabchA} 
		\sfch_1(\calF) = \sum_{\imath\in \calQ(\calF)_0}s_\imath C_\imath\ .
	\end{align}
	Moreover, 
	\begin{align}\label{eq:vanextA}
		\Ext^1_Y(\calF, \calF)=0\ .
	\end{align}
\end{proposition} 
\begin{proof}
	Since $\calF$ is $\omega$-stable and purely one-dimensional, we have
	\begin{align}
		\sfch_1(\calF)= \sum_{i=1}^r\, m_i C_i\ ,
	\end{align}
	with $m_i\geq 0$ for $1\leq i \leq r$, and its set-theoretic support is connected. Thus, we can associate with $\sfch_1(\calF)$ a type ADE quiver $\calQ(\calF)$ (which is a subquiver of the quiver $\calQ^{\mathsf{fin}}$), whose vertices $\imath$ correspond to $C_i$ with $m_i>0$ in $\sfch_1(\calF)$. Therefore, we can rewrite $\sfch_1(\calF)$ as follows:
	\begin{align}
		\sfch_1(\calF)=\sum_{\imath \in \calQ(\calF)_0}s_\imath C_\imath \ .
	\end{align}
	By the Riemann-Roch theorem and the McKay correspondence (cf.\ Proposition~\ref{prop:McKay}-(\ref{Mckay-(finite)})), one has 
	\begin{align}
		\chi(\calF,\calF)  = - \sfch_1(\calF)^2 = (\alpha_\calF, \alpha_\calF)_{\calQ(\calF)} \ ,
	\end{align}
	where 
	\begin{align}\label{eq:alphaF}
		\alpha_\calF\coloneqq \sum_{\imath \in \calQ(\calF)_0}s_\imath\alpha_\imath
	\end{align}
	is the element of the root lattice of the Lie algebra $\frakg_{\calQ(\calF)}$ corresponding to $\sfch_1(\calF)$, and $(-, -)_{\calQ(\calF)}$ denotes its Cartan-Killing form. Since type ADE root lattices are even, we get  
	\begin{align}
		\chi(\calF, \calF)\in 2\, \Z_{>0}  \ .
	\end{align}
	Since $\calF$ is $\omega$-stable, $\dim \Ext^0_Y(\calF,\calF)=1$. Using Serre duality, $\dim \Ext^2_Y(\calF, \calF)=1$ as well. Hence 
	\begin{align}
		\chi(\calF, \calF)=2-\dim \Ext^1_Y(\calF,\calF)\leq 2\ .
	\end{align}
	Thus, $\chi(\calF, \calF)$ is exactly two, hence $\alpha_\calF$ is a root.\footnote{In a type ADE root lattice, any element $\beta$ such that $(\beta, \beta)=2$ is a root.} Therefore, the vanishing result \eqref{eq:vanextA} holds. 
\end{proof}

\begin{remark}
	Note that if $\calF$ is a $\omega$-stable compactly supported one-dimensional sheaf on $Y$ with $\sfch_1(\calF)=C_i$ for some $1\leq i\leq r$, then necessarily $\calF\simeq \scrO_{C_i}(d)$ for some $d\in \Z$.
\end{remark}

Recall that we can associate to a coherent sheaf $\calF$ on $Y$ its support $Z_0$ and its Fitting support $Z$. As stated in \cite[\href{https://stacks.math.columbia.edu/tag/0C3C}{Tag~0C3C}, Lemma~31.9.3]{stacks-project}, one has, $Z_0\subset Z$, $Z_{\mathsf{red}}=Z_0$, and there exists a coherent sheaf $\calG$ on $Z$ such that
\begin{align}
	i_\ast(\calG)=\calF \ ,	
\end{align}
where $i$ denote the embedding of $Z$ into $Y$. Moreover, if $\calF$ is a pure one-dimensional sheaf, $Z$ is a representative of $\sfch_1(\calF)$. \footnote{We do not distinguish between $\calF$ and $\calG$ when it is clear from the context.}

Now, if $\calQ(\calF)$ is of type A, the only root of the form \eqref{eq:rootalphaF} is its highest root. Thus $m_\imath=1$ for any vertex $\imath$ of $\calQ(\calF)$. Correspondingly, the Fitting support of $\calF$ is reduced. Thus, in the type A case, we have the following:
\begin{corollary}\label{cor:finitewallA}
	Let $\calQ^{\mathsf{fin}}$ be a type A quiver. Let $\omega$ be a $\Q$-polarization of $Y$.  Let $\calF$ be an $\omega$-stable one-dimensional sheaf on $Y$ with proper support. Then
	\begin{align}\label{eq:Cij}
		\sfch_1(\calF)=C_i+\cdots + C_j\eqqcolon C_{i, j}
	\end{align}
	for some $1\leq i \leq j\leq r$. In particular, $\calF$ is scheme-theoretically supported on the reduced divisor $C_{i, j}$.
\end{corollary}

\subsection{Categorical McKay correspondence for semistable objects}\label{mckaysect}

Recall that $\calQ^{\mathsf{fin}}$ is the finite Dynkin diagram of type ADE associated with the finite group $G$ and $\calQ$ denotes the corresponding affine Dynkin diagram. 

Fix a polarization $\omega$ on $Y$. Denote by $\catCoh_\mu^{\omega\textrm{-}\mathsf{ss}}(Y)$ the abelian category of $\omega$-semistable one-dimensional sheaves on $Y$ of slope $\mu$. By abuse of notation, we denote by $\catCoh_\infty^{\omega\textrm{-}\mathsf{ss}}(Y)$ the abelian category of zero-dimensional  sheaves on $Y$.

Let $\zeta\in \Q^{r+1}$ be a stability condition. Denote by $\sfRep_0^{\zeta\textrm{-}\mathsf{ss}}(\Pi_\calQ)$ the abelian category of $\zeta$-semistable finite-dimensional representations of $\Pi_{\calQ}$ of zero $\zeta$-slope. 

Thus, our first main result is that the tilting functor ``preserves'' semistability:
\begin{theorem}\label{categmckay} 
	Let $\omega$ be a $\Q$-polarization of $Y$ and $\mu\in \Q_{>0}\cup\{\infty\}$. Set
	\begin{align}
		\zeta_i\coloneqq \langle \omega, C_i\rangle
	\end{align}
	for $1\leq i \leq r$, and
	\begin{align}
		\zeta_{r+1}\coloneqq\begin{cases}
		\displaystyle	\frac{1}{\mu}  - \sum_{i=1}^{r}m_i\, \zeta_i\ & \text{if } \mu\neq \infty\ ,\\[8pt]
		\displaystyle	- \sum_{i=1}^{r} m_i\, \zeta_i &\text{otherwise}\ .
		\end{cases}
	\end{align}
	Then the tilting functor \eqref{eq:cateqB} yields an equivalence
	\begin{align}
		\catCoh_\mu^{\omega\textrm{-}\mathsf{ss}}(Y) \simeq \sfRep_0^{\zeta\textrm{-}\mathsf{ss}}(\Pi_\calQ) \ .
	\end{align}
\end{theorem}

\begin{proof}
	Let $\calF$ be a $\omega$-(semi)stable one-dimensional sheaf with $\mu_\omega(\calF)=\mu>0$.
	Let $E \coloneqq \calF$ be a the one-term complex concentrated in amplitude $[0,0]$ associated to $\calF$.
	Then Proposition~\ref{finitewallF} shows that $E$ belongs to $\catPps(Y/X)$, that it is $\zeta$-(semi)stable and that it has degree zero.
	
	Viceversa, let $E\in \catPps^0(Y/X)$ be $\zeta$-(semi)stable. Since $\zeta_i>0$ for $i=1, \ldots, r$, and 
	\begin{align}
		\sum_{i=1}^{r+1}m_i\, \zeta_i >0\ .
	\end{align}
	we can apply Proposition~\ref {sheafwallA}, which shows that $\calF\coloneqq \calH^0(E)$ is a $\omega$-(semi)stable one-dimensional pure sheaf on $Y$ with slope $\mu$. One can argue similarly if $\mu=\infty$ and get the assertion.
\end{proof}

\subsection{Chains of stable sheaves and type A Dynkin quiver}\label{s:chain} 

Let $G$ be $\Z_{N+1}$ for $N\in \Z$, $N\geq 1$ (hence, $r=N$). In this section, we will provide a finer characterization of $\omega$-(semi)stable sheaves of positive slope.  

\subsubsection{Characterization of stable sheaves}

We start by recalling the following version of Grothendieck duality (cf.\ \cite{Conrad_duality}).
\begin{lemma}\label{lem:extlemm}
	Let $S$ be a smooth quasi-projective surface, and let $\calF,\calG$ be two purely one dimensional coherent sheaves on $S$ with proper support. Suppose $\calG$ is scheme theoretically supported on a divisor $D$ so that $\calF\otimes \scrO_D$ is zero-dimensional. 
	Then Grothendieck duality for the closed embedding $D \hookrightarrow S$ yields a functorial isomorphism 
	\begin{align}
		\Ext^1_S(\calG,\calF) \simeq \Hom_D(\calG, \calF\otimes \scrO_D(D))\ . 
	\end{align}
\end{lemma} 

\begin{proof}
	This follows for example from Grothendieck duality for the closed embedding $D\hookrightarrow S$ using the fact that its relative dualizing complex is $O_{D_2} (D_2)[-1]$.
\end{proof}

Let $\calF$ be a one-dimensional pure coherent sheaf on $Y$ with 
\begin{align}
	\sfch_1(\calF)=C_{i,j}
\end{align}
for some $1\leq i < j\leq N$. For $i\leq \ell\leq j$, let $\calT_{\ell,j}$ be the maximal zero-dimensional subsheaf of $\calF\otimes \scrO_{C_{\ell, j}}$ and set $\calG_{\ell,j}\coloneqq \calF\otimes \scrO_{C_{\ell, j}}/\calT_{\ell,j}$. By construction, for each $i\leq \ell \leq j-1$ there is an exact sequence
\begin{align}\label{eq:symmexseqA} 
	0 \longrightarrow \calF_{i,\ell} \longrightarrow \calF \longrightarrow \calG_{\ell+1, j} \longrightarrow 0\ ,
\end{align}
where $\calF_{i,\ell}$ and $\calG_{\ell+1,j}$ are one-dimensional pure sheaves with 
\begin{align}
	\sfch_1(\calF_{i,\ell}) = C_{i,\ell} \quad\text{and}\quad \sfch_1(\calG_{\ell+1,j}) = C_{\ell+1,j}\ .
\end{align}
In particular they are scheme-theoretically supported  on $C_{i,\ell}$ and $C_{\ell+1,j}$ respectively.  By applying Grothendieck duality for the closed embedding $C_{\ell+1,j} \hookrightarrow Y$ (cf.\ Lemma~\ref{lem:extlemm}), let 
\begin{align}\label{eq:phimapA}
	\phi_{\ell+1}\colon \calG_{\ell+1}\to \calF_{i,\ell}\otimes \scrO_{C_{\ell+1,j}}(C_{\ell+1,j})
\end{align}
be the morphism corresponding to the extension class of \eqref{eq:symmexseqA}. 

\begin{lemma}\label{structlemma} 
	For any $i\leq \ell\leq j$ there is a commutative diagram with exact rows and columns 
	\begin{align}\label{eq:symmdiagA}
		\begin{tikzcd}[ampersand replacement = \&]
			\& 0\arrow[d] \& 0 \arrow[d] \& \\
			\& \calF_{i,\ell-1} \arrow{r}{\bf 1} \arrow[d] \& \calF_{i,\ell-1}\arrow[d] \& \\
			0\arrow[r] \& \calF_{i,\ell} \arrow[r] \arrow{d}{f_\ell} \& \calF \arrow[r] \arrow[d] \&  \calG_{\ell+1,j} 
			\arrow{d}{\bf 1}\arrow[r] \& 0\\
			0\arrow[r] \& \calF_\ell \arrow{r}{g_\ell}\arrow[d] \& \calG_{\ell,j}\arrow[r] \arrow[d] \& \calG_{\ell+1,j} \arrow[r]\& 0 \\
			\& 0 \& 0 \&
		\end{tikzcd}
	\end{align}	
	where $\calF_\ell \simeq \scrO_{C_\ell}(e_\ell)$ for some $e_\ell \in \Z$, and,
	by convention, $\calF_{i,i-1}$ and $\calG_{j,j+1}$ are identically zero. In particular there are isomorphisms $\calF_{i,i}\simeq \calF_i$ and $\calG_{j,j}\simeq \calF_j$. 
	
	Moreover, the morphism \eqref{eq:phimapA} determines uniquely a second morphism
	\begin{align}\label{eq:psimapA}
		\psi_{\ell+1}\colon \calF_{\ell+1}\to \calF_\ell\otimes \scrO_{C_{\ell+1,j}}(C_{\ell+1,j}) \ ,
	\end{align}
	which fits in a commutative diagram 
	\begin{align}\label{eq:symmdiagB} 
		\begin{tikzcd}[ampersand replacement = \&]
			\calF_{\ell+1} \arrow{d}{g_{\ell+1}} \arrow{rr}{\psi_{\ell+1}} \& \& \calF_\ell\otimes 
			\scrO_{C_{\ell+1,j}}(C_{\ell+1,j}) \\
			\calG_{\ell+1,j}  \arrow{rr}{\phi_{\ell+1}} \& \& \calF_{i,\ell}\otimes \scrO_{C_{\ell+1,j}}(C_{\ell+1,j}) \arrow{u}{f_\ell\otimes {\bf 1}}
		\end{tikzcd}\ .
	\end{align}
\end{lemma}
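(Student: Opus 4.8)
The plan is to construct the diagram \eqref{eq:symmdiagA} by an inductive/recursive procedure on $\ell$ (running from $\ell = i$ up to $\ell = j$), at each step peeling off one irreducible component $C_\ell$ of the support, and then to verify that the resulting $F_\ell$ is a line bundle on $C_\ell$ and that the compatibility square \eqref{eq:symmdiagB} holds. Concretely, the middle row of \eqref{eq:symmdiagA} is the sequence \eqref{eq:symmexseqA}, and the bottom row is the analogue of \eqref{eq:symmexseqA} for the sheaf $G_{\ell,j}$ (with support $C_{\ell,j}$) in place of $E$: namely $0 \to F_\ell \to G_{\ell,j} \to G_{\ell+1,j} \to 0$, where $F_\ell$ is the kernel. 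The key structural input is that the natural surjection $G_{\ell,j} \twoheadrightarrow G_{\ell+1,j}$ (restriction to the subcurve $C_{\ell+1,j}$, modulo torsion) and the surjection $E \twoheadrightarrow G_{\ell+1,j}$ are compatible, so that the snake lemma produces the identification $F_{i,\ell}/F_{i,\ell-1} \simeq F_\ell$ and fills in the left column $0 \to F_{i,\ell-1} \to F_{i,\ell} \xrightarrow{f_\ell} F_\ell \to 0$. One must check that $F_{i,\ell-1} \hookrightarrow F_{i,\ell}$ is precisely the canonical inclusion coming from \eqref{eq:symmexseqA} applied at level $\ell - 1$ versus level $\ell$; this is where one uses that the $G_{\ell,j}$'s form a compatible tower of quotients $E \twoheadrightarrow G_{i+1,j} \twoheadrightarrow \cdots$, or equivalently that $F_{i,\ell-1} \subseteq F_{i,\ell}$ as subsheaves of $E$.

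Next I would identify $F_\ell$. By construction $F_\ell$ is a subsheaf of $G_{\ell,j} \subseteq E \otimes \calO_{C_{\ell,j}}$ modulo torsion, supported scheme-theoretically on $C_\ell$ and pure of dimension one (no torsion, since it is the kernel of a surjection of a pure sheaf onto a pure sheaf, and a torsion subsheaf would inject into $G_{\ell,j}$ contradicting the construction of $G_{\ell,j}$ as the torsion-free quotient). A pure one-dimensional sheaf on the reduced irreducible rational curve $C_\ell$ with $\sfch_1 = C_\ell$ is a line bundle, hence $F_\ell \simeq \calO_{C_\ell}(e_\ell)$ for some $e_\ell \in \Z$; this is exactly the remark following Proposition~\ref{finitewallA}. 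The boundary conventions $F_{i,i-1} = 0$ and $G_{j,j+1} = 0$ then give $F_{i,i} = F_i' \simeq F_\ell|_{\ell = i}$ (supported on $C_i$, hence $\simeq \calO_{C_i}(e_i)$, matching the notation $F_i$ in \eqref{eq:sphericalobj} up to the shift/twist convention) and $G_{j,j} = F_\ell|_{\ell=j} \simeq \calO_{C_j}(e_j)$, i.e. $G_{j,j} \simeq F_j$.

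Finally I would produce $\psi_{\ell+1}$ and the square \eqref{eq:symmdiagB}. The morphism $\phi_{\ell+1}\colon G_{\ell+1} \to F_{i,\ell} \otimes \calO_{C_{\ell+1,j}}(C_{\ell+1,j})$ is, by Lemma~\ref{exthomlemm}, the image of the extension class of \eqref{eq:symmexseqA} under the isomorphism $\Ext^1_Y(G_{\ell+1,j}, F_{i,\ell}) \simeq \Hom_Y(G_{\ell+1}, F_{i,\ell} \otimes \calO_{C_{\ell+1,j}}(C_{\ell+1,j}))$. I would compose $\phi_{\ell+1}$ with the (surjective, by the middle-to-bottom column of \eqref{eq:symmdiagA}) map $f_\ell \otimes \mathbf{1}\colon F_{i,\ell} \otimes \calO_{C_{\ell+1,j}}(C_{\ell+1,j}) \to F_\ell \otimes \calO_{C_{\ell+1,j}}(C_{\ell+1,j})$ ``upstairs'' and observe that, since the bottom row of \eqref{eq:symmdiagA} is obtained from the middle row by pushing out along $f_\ell$ (equivalently, its extension class is $(f_\ell)_* $ of the class of the middle row), the composite $(f_\ell \otimes \mathbf 1) \circ \phi_{\ell+1}$ factors through the surjection $g_{\ell+1}\colon F_{\ell+1} \to G_{\ell+1,j}$ — here using that the relevant $\Ext^1$ of the bottom sequence is computed on the smaller curve $C_{\ell,j}$ and that the class of the bottom row, viewed via Lemma~\ref{exthomlemm}, is the unique morphism $\psi_{\ell+1}$ making the square commute. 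One has to check that $\psi_{\ell+1}$ so defined indeed lands in $F_\ell \otimes \calO_{C_{\ell+1,j}}(C_{\ell+1,j})$ and is uniquely determined, which again is Lemma~\ref{exthomlemm} applied to the bottom sequence of \eqref{eq:symmdiagA}.

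The main obstacle I anticipate is the bookkeeping in the snake-lemma/pushout step: making precise that the bottom row of \eqref{eq:symmdiagA} is the pushout of the middle row along $f_\ell$ (so that its class is $(f_\ell)_*$ of the middle class), and that the vertical inclusion $F_{i,\ell-1} \hookrightarrow F_{i,\ell}$ and the identity on $G_{\ell+1,j}$ are the canonical maps coming from the compatible tower of quotients of $E$. Once that functoriality is set up cleanly, the identification of $F_\ell$ as a line bundle and the construction of $\psi_{\ell+1}$ via Lemma~\ref{exthomlemm} are essentially formal, since Lemma~\ref{exthomlemm} already packages $\Ext^1$ on these curves as a $\Hom$ of sheaves twisted by the normal-bundle line bundle, and naturality of that identification under pushout gives commutativity of \eqref{eq:symmdiagB}.
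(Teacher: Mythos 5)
Your construction of diagram \eqref{eq:symmdiagA} is essentially the paper's: the quotients $G_{\ell,j}$ form a compatible tower $E\twoheadrightarrow G_{\ell,j}\twoheadrightarrow G_{\ell+1,j}$ (the paper justifies the second epimorphism by $\Hom_Y(F_{i,\ell-1},G_{\ell+1,j})=0$, which holds because $C_{i,\ell-1}$ and $C_{\ell+1,j}$ are disjoint — this is the one compatibility check you flag but should actually carry out), and the snake lemma then gives the diagram with $F_\ell\simeq F_{i,\ell}/F_{i,\ell-1}$; your purity argument identifying $F_\ell\simeq\calO_{C_\ell}(e_\ell)$ makes explicit a point the paper leaves implicit, and is fine. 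One notational slip: the $F_i$, $F_j$ appearing in $F_{i,i}\simeq F_i$ and $G_{j,j}\simeq F_j$ are the line bundles $F_\ell$ of this construction (the boundary cases of the filtration), not the spherical objects of \eqref{eq:sphericalobj}.

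For the second half your route is more complicated than necessary and contains a misstatement: $g_{\ell+1}\colon F_{\ell+1}\to G_{\ell+1,j}$ is a monomorphism (the inclusion from the bottom row at step $\ell+1$), not a surjection, so there is no ``factoring through'' to verify and the pushout/extension-class discussion is not needed for the statement. The square \eqref{eq:symmdiagB} simply forces $\psi_{\ell+1}\coloneqq(f_\ell\otimes\mathbf{1})\circ\phi_{\ell+1}\circ g_{\ell+1}$, so existence and uniqueness are immediate; this is the map you end up with anyway, so the slip is not fatal. The paper instead proves the natural isomorphisms $\Hom_Y(G_{\ell+1,j},F_{i,\ell}\otimes\calO_{C_{\ell+1,j}}(C_{\ell+1,j}))\simeq\Hom_Y(F_{\ell+1},F_{i,\ell}\otimes\calO_{C_{\ell+1,j}}(C_{\ell+1,j}))\simeq\Hom_Y(F_{\ell+1},F_\ell\otimes\calO_{C_{\ell+1,j}}(C_{\ell+1,j}))$, both coming from disjointness of supports (the cokernel $G_{\ell+2,j}$ of $g_{\ell+1}$ and the kernel $F_{i,\ell-1}$ of $f_\ell$ live away from the point $p_\ell$ where the twisted target is supported); this buys the stronger fact that $\phi_{\ell+1}$ and $\psi_{\ell+1}$ determine each other, which is what gets used in the later stability estimates, whereas your composite definition only records that $\phi_{\ell+1}$ determines $\psi_{\ell+1}$.
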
 

\begin{proof} 
	By construction there is a natural epimorphism $\calG_{\ell, j} \twoheadrightarrow \calG_{\ell+1, j}$ since the support condition implies $\Hom_Y(\calF_{i, \ell-1}, \calG_{\ell+1, j})=0$. Then diagram \eqref{eq:symmdiagA} follows from the snake lemma. 
	
	Since $\calG_{\ell+2, j}$ and $\calF_{i, \ell}$ have disjoint support, one obtains a natural isomorphism 
	\begin{align}
		\Hom_Y(\calG_{\ell+1, j}, \calF_{i, \ell}\otimes \scrO_{C_{\ell+1, j}}(C_{\ell+1, j})) \simeq \Hom_Y(\calF_{\ell+1}, \calF_{i, \ell}\otimes \scrO_{C_{\ell+1, j}}(C_{\ell+1, j}))\ .
	\end{align}
	Again, since $\calF_{\ell+1}$ and $\calF_{i, \ell-1}$ have disjoint support, one further has a natural isomorphism 
	\begin{align}
		\Hom_Y(\calF_{\ell+1}, \calF_{i, \ell}\otimes \scrO_{C_{\ell+1, j}}(C_{\ell+1, j}))\simeq \Hom_Y(\calF_{\ell+1}, \calF_{\ell}\otimes \scrO_{C_{\ell+1, j}}(C_{\ell+1, j}))\ .
	\end{align} 	
	This implies the second part of Lemma~\ref {structlemma}.
\end{proof} 

In conclusion, up to isomorphism,  $\calF$ determines uniquely a sequence of line bundles $(\calF_i, \ldots, \calF_j)$ on the curves $C_i, \ldots, C_j$ of degrees $(e_i, \ldots, e_j)$ respectively. Note that 
\begin{align}\label{eq:tensisomA}
	\calF_\ell\otimes \scrO_{C_{\ell+1,j}}(C_{\ell+1,j}) \simeq \calF_\ell\vert_{p_\ell}\simeq \scrO_{p_\ell}
\end{align}
where $p_\ell$ is the transverse intersection point between $C_\ell$ and $C_{\ell+1}$. Then, by the Grothendieck duality for the closed embedding $C_{\ell+1,j} \hookrightarrow Y$ (cf.\ Lemma~\ref{lem:extlemm}), we have
\begin{align}
	\Ext^1_Y(\calF_\ell, \calF_m) \simeq \begin{cases}
		\C & \text{for } \ell = m+1\ , \\
		0 & \text{otherwise}\ .
	\end{cases}
\end{align}
Therefore, $\calF$ admits a recursive construction through a sequence of extensions 
\begin{align}
	0 \longrightarrow \calF_{\ell} \longrightarrow \calG_{\ell, j} \longrightarrow \calG_{\ell+1, j} \longrightarrow 0\ ,
\end{align}
where $\calG_{j,j} \simeq \calF_{j}$. At each step the associated extension class corresponds to the morphism \eqref{eq:phimapA}, or, equivalently, \eqref{eq:psimapA}. 

\begin{lemma}\label{finitewallAB} 
	Let $\omega$ be a $\Q$-polarization of $Y$. Let $\calF$ be a one-dimensional pure coherent sheaf on $Y$ with 
	\begin{align}
		\sfch_1(E)=C_{i,j}
	\end{align}
	for some $1\leq i < j\leq N$. If $\calF$ is $\omega$-stable, all morphisms $\phi_{\ell+1}$ are nonzero. Moreover, let $\calL$ denote the following line bundle on $Y$:
	\begin{align}
		\calL \coloneqq \otimes_{\ell=i}^{j-1}\scrO_Y((-1-e_\ell)D_\ell)\otimes \scrO_Y(-e_j D_j)\ .
	\end{align}
	Then $\calF\otimes \calL\simeq \scrO_{C_{i,j}}$. 
\end{lemma}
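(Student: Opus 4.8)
The plan is to prove the two assertions in turn, both by exploiting the recursive extension structure of $E$ recorded in Lemma~\ref{structlemma} and the discussion surrounding it. For the nonvanishing, note that $\phi_{\ell+1}$ is by construction the image of the extension class of \eqref{eq:symmexseqA} under the isomorphism of Lemma~\ref{exthomlemm}; hence if $\phi_{\ell+1}=0$ then \eqref{eq:symmexseqA} splits, giving $E\simeq F_{i,\ell}\oplus G_{\ell+1,j}$. Since $i\le\ell\le j-1$, both summands are nonzero, so $E$ would be decomposable; but an $\omega$-stable sheaf is simple, hence indecomposable, a contradiction. Thus all the $\phi_{\ell+1}$ are nonzero, and therefore, via the commutative square \eqref{eq:symmdiagB}, so are all the $\psi_{\ell+1}$; equivalently, each extension $0\to F_\ell\to G_{\ell,j}\to G_{\ell+1,j}\to 0$ in the recursive construction of $E$ has nonzero class.

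For the second assertion I would begin with the degree bookkeeping. Using $D_m\cdot C_\ell=\delta_{m\ell}$ for $1\le m,\ell\le N-1$ (established in the proof of Lemma~\ref{dimvectlemm}), one finds $\deg(L|_{C_\ell})=-1-e_\ell$ for $i\le\ell\le j-1$ and $\deg(L|_{C_j})=-e_j$. Since $F_\ell\simeq\calO_{C_\ell}(e_\ell)$ is scheme-theoretically supported on $C_\ell$, this yields
\begin{align}
	F_\ell\otimes L\simeq\calO_{C_\ell}(-1)\quad(i\le\ell\le j-1)\ ,\qquad F_j\otimes L\simeq\calO_{C_j}\ .
\end{align}
Because $-\otimes L$ is an autoequivalence, tensoring the recursion with $L$ produces short exact sequences $0\to\calO_{C_\ell}(-1)\to G_{\ell,j}\otimes L\to G_{\ell+1,j}\otimes L\to 0$ with nonzero class, and it identifies $\Ext^1_Y(G_{\ell+1,j}\otimes L,\calO_{C_\ell}(-1))\simeq\Ext^1_Y(G_{\ell+1,j},F_\ell)$, which by Lemma~\ref{exthomlemm} and the isomorphism \eqref{eq:tensisomA} (reading $F_\ell\otimes\calO_{C_{\ell+1,j}}(C_{\ell+1,j})\simeq\calO_{p_\ell}$) is one-dimensional.

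I would then argue by descending induction on $\ell$ that $G_{\ell,j}\otimes L\simeq\calO_{C_{\ell,j}}$. The base case $\ell=j$ is the computation $F_j\otimes L\simeq\calO_{C_j}$. For the inductive step, recall the standard short exact sequence $0\to\calO_{C_\ell}(-p_\ell)\to\calO_{C_{\ell,j}}\to\calO_{C_{\ell+1,j}}\to 0$ attached to the closed embedding $C_{\ell+1,j}\hookrightarrow C_{\ell,j}$, where $p_\ell=C_\ell\cap C_{\ell+1}$ is the unique point in which $C_\ell$ meets the chain $C_{\ell+1,j}$ and $\calO_{C_\ell}(-p_\ell)\simeq\calO_{C_\ell}(-1)$; this sequence is non-split, because $\calO_{C_{\ell,j}}$ is the structure sheaf of a connected reduced proper scheme, so $\End_Y(\calO_{C_{\ell,j}})\simeq\C$ and $\calO_{C_{\ell,j}}$ is indecomposable, whereas a split extension of two nonzero sheaves is decomposable. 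Invoking the inductive hypothesis $G_{\ell+1,j}\otimes L\simeq\calO_{C_{\ell+1,j}}$, both $G_{\ell,j}\otimes L$ and $\calO_{C_{\ell,j}}$ are then non-split extensions of $\calO_{C_{\ell+1,j}}$ by $\calO_{C_\ell}(-1)$ inside a one-dimensional $\Ext^1$, hence isomorphic. Taking $\ell=i$ and using $E\simeq G_{i,j}$ — the middle column of diagram \eqref{eq:symmdiagA} for $\ell=i$, in which $F_{i,i-1}=0$ — we conclude $E\otimes L\simeq\calO_{C_{i,j}}$.

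The crux of the argument is the pair of one-dimensionality statements: that the $\Ext^1$ controlling each step of the recursion is \emph{exactly} one-dimensional, and the same for the extension realizing $\calO_{C_{\ell,j}}$, so that the single condition ``extension class nonzero'' determines the extension up to isomorphism. Both rest on Lemma~\ref{exthomlemm} together with the local computation \eqref{eq:tensisomA} identifying $F_\ell\otimes\calO_{C_{\ell+1,j}}(C_{\ell+1,j})$ with a skyscraper sheaf at the node $p_\ell$; granting these, the induction is routine, and the only genuine bookkeeping lies in the degree computation for $L|_{C_\ell}$.
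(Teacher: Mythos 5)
Your argument is correct and follows essentially the same route as the paper's proof: splitting of \eqref{eq:symmexseqA} would contradict stability, the intersection numbers $D_m\cdot C_\ell=\delta_{m\ell}$ show the twisted line bundles have degrees $(-1,\ldots,-1,0)$, and one concludes by induction through the one-dimensional extension groups. The only difference is that you spell out the ``easy inductive argument'' the paper leaves implicit, namely comparing $G_{\ell,j}\otimes L$ and $\calO_{C_{\ell,j}}$ as non-split extensions of $\calO_{C_{\ell+1,j}}$ by $\calO_{C_\ell}(-1)$ inside $\Ext^1_Y\simeq\C$, which is a legitimate completion of that step.
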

\begin{proof}
	If $i=j$ there is nothing to prove. 
	
	Suppose $i<j$. Then the first claim is clear. 
	Indeed, if $\phi_{\ell+1}$ is zero for some $i\leq\ell\leq j-1$, the exact sequence \eqref{eq:symmexseqA} splits, and this contradicts stability. In order to prove the second claim, note that the sequence of line bundles $\calF_\ell'$ corresponding to $\calF'\coloneqq \calE\otimes \calL$ has degrees  $(-1,-1, \ldots, -1, 0)$. Moreover, since all successive morphisms $\phi_{\ell+1}'$ with $i\leq \ell < j$ are nonzero, each exact sequence 
	\begin{align}
		0\longrightarrow \scrO_{C_\ell}(-1) \longrightarrow \calG_{\ell,j}' \longrightarrow \calG_{\ell+1,j}' \longrightarrow 0
	\end{align}
	has a nonzero extension class in 
	\begin{align}
		\Ext^1_Y(\calG_{\ell+1,j}',\scrO_{C_\ell}(-1)) \simeq \C\ .
	\end{align}
	Then an easy inductive argument shows that each quotient $\calG_{\ell,j}\simeq \scrO_{C_{\ell,j}}$ for all $i\leq \ell\leq j$. 
\end{proof}

\begin{proposition}\label{finitewallB}
	Let $\omega$ be a $\Q$-polarization of $Y$. Set
	\begin{align}
		\zeta_i\coloneqq \langle \omega, C_i\rangle
	\end{align}
	for $1\leq i \leq N$. Let $\calF$ be a one-dimensional pure coherent sheaf on $Y$ with 
	\begin{align}\label{eq:topinvA} 
		\sfch_1(\calF)=C_{i,j}
	\end{align}
	for some $1\leq i < j\leq N$. If $\calF$ is $\omega$-stable, we have
	\begin{align}\label{eq:tailsheafA}
		\zeta_i \mu_\omega(\calF) -2 < e_i < \zeta_i\mu_\omega(\calF)-1 \quad\text{and}\quad 
		\zeta_j \mu_\omega(\calF) -1 < e_j < \zeta_j\mu_\omega(\calF)\ ,
	\end{align}
	and for any $i<\ell<j$  
	\begin{align}\label{eq:midsheafA} 
		\zeta_\ell \mu_\omega(\calF) -2 < e_\ell < \zeta_\ell\mu_\omega(\calF)\ . 
	\end{align}
\end{proposition}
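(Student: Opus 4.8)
The plan is to exploit the concrete description of $E$ furnished by Lemma~\ref{finitewallAB}. By that lemma $E\otimes L\simeq \calO_{C_{i,j}}$, where $L=\bigotimes_{\ell=i}^{j-1}\calO_Y((-1-e_\ell)D_\ell)\otimes\calO_Y(-e_jD_j)$; hence $E\simeq L^{-1}|_{C_{i,j}}$ is a line bundle on the reduced nodal chain $C_{i,j}$ (which, by Proposition~\ref{finitewallA}, is the scheme-theoretic support of $E$). Write $\delta_m\coloneqq \deg\big(E\otimes\calO_{C_m}\big)$ for the components of its multidegree, $i\le m\le j$. From the above expression for $L$ and the relations $D_\ell\cdot C_m=\delta_{\ell m}$ ($1\le\ell,m\le N-1$, used in the proof of Lemma~\ref{dimvectlemm}) one reads off
\begin{align}
	\delta_m=e_m+1\ \ (i\le m\le j-1),\qquad \delta_j=e_j\ .
\end{align}
Also $\sfch_1(E)=C_{i,j}$, so $\sfch_1(E)\cdot\omega=\sum_{\ell=i}^j\zeta_\ell$ and $\mu_\omega(E)=\chi(E)/\sum_{\ell=i}^j\zeta_\ell$.

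First I would obtain the \emph{lower} bounds from restriction quotients. For each $i\le m\le j$, restriction along $C_m\hookrightarrow Y$ gives a surjection $E\twoheadrightarrow E\otimes\calO_{C_m}$ in $\sfCoh_\sfc(Y)$; since $E$ is locally free of rank one on $C_{i,j}$, the target is the line bundle $\calO_{C_m}(\delta_m)$, and it is supported on a proper subcurve, as $\sfch_1(\calO_{C_m}(\delta_m))\cdot\omega=\zeta_m<\sum_{\ell=i}^j\zeta_\ell=\sfch_1(E)\cdot\omega$ (here $i<j$). Hence $\omega$-stability of $E$ forces $(\delta_m+1)/\zeta_m=\mu_\omega(\calO_{C_m}(\delta_m))>\mu_\omega(E)$, i.e.\ $\delta_m>\zeta_m\mu_\omega(E)-1$; substituting the values of $\delta_m$ gives $e_m>\zeta_m\mu_\omega(E)-2$ for $i\le m\le j-1$ and $e_j>\zeta_j\mu_\omega(E)-1$.

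For the \emph{upper} bounds I would use the complementary subsheaves. Put $Z_m\coloneqq \overline{C_{i,j}\setminus C_m}$, a nonempty closed subscheme of $C_{i,j}$ (nonempty since $i<j$), and $E_m\coloneqq \ker\big(E\to E\otimes\calO_{Z_m}\big)$, which equals $E\otimes_{\calO_{C_{i,j}}}\calI_{Z_m}$ ($E$ being locally free on $C_{i,j}$). A local computation at each node of $C_{i,j}$ lying on $C_m$ --- where in formal coordinates $\calO_{C_{i,j}}=\C[[x,y]]/(xy)$ with $C_m=\{y=0\}$ and $Z_m=\{x=0\}$, so that $E_m$ is locally the ideal $(x)$, i.e.\ $E\otimes\calO_{C_m}$ twisted down by $1$ at that node --- identifies $E_m\simeq \calO_{C_m}(\delta_m-\nu_m)$, where $\nu_m$ is the number of nodes of $C_{i,j}$ on $C_m$, so $\nu_m=1$ for $m\in\{i,j\}$ and $\nu_m=2$ for $i<m<j$. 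Then $E_m$ is a nonzero proper subsheaf of $E$ with $\sfch_1(E_m)\cdot\omega=\zeta_m<\sfch_1(E)\cdot\omega$, so stability gives $(\delta_m-\nu_m+1)/\zeta_m=\mu_\omega(E_m)<\mu_\omega(E)$, i.e.\ $\delta_m<\zeta_m\mu_\omega(E)+\nu_m-1$; substituting gives $e_i<\zeta_i\mu_\omega(E)-1$, $e_j<\zeta_j\mu_\omega(E)$ and $e_m<\zeta_m\mu_\omega(E)$ for $i<m<j$. Combining with the previous paragraph yields \eqref{eq:tailsheafA} and \eqref{eq:midsheafA}.

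The individual estimates are elementary; the only real content --- and the place to be careful --- is the bookkeeping that produces the asymmetry among the three regimes. It comes from exactly two shifts: the passage from the abstract degree $e_m$ attached to $F_m$ to the honest multidegree $\delta_m$ of $E$ as a line bundle on $C_{i,j}$ (a shift by $1$ at every component except the last, dictated by the divisors $D_\ell$ via $D_\ell\cdot C_m=\delta_{\ell m}$), and the node count $\nu_m$, which is $2$ on interior components and $1$ at the two ends of the chain. Everything else is the standard observation that a line bundle on a chain of $\PP^1$'s carries, for each component, a natural ``restrict to that component'' quotient and an ``ideal of the remaining components'' subsheaf, both of which can be tested against $\omega$-stability.
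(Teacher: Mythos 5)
Your proposal is correct, and the inequalities come out exactly as stated, but the route is genuinely different from the paper's. The paper never invokes Lemma~\ref{finitewallAB} in this proof: it works directly with the extension data of Lemma~\ref{structlemma}, producing a nonzero morphism $E\to\calO_{C_\ell}(e_\ell+1)$ (via multiplication by the node-defining section $z$, the vanishing $\zeta\circ\psi_{\ell+1}=0$, and the functoriality in Lemma~\ref{exthomlemm}) for the lower bounds, and a nonzero morphism $\calO_{C_{\ell+1}}(e_{\ell+1}-1)\to F_{i,\ell+1}\subset E$ for the upper bounds, then compares slopes; the tail cases are left as ``completely analogous.'' You instead use Lemma~\ref{finitewallAB} (legitimately --- it precedes Proposition~\ref{finitewallB} and its proof does not use it, so there is no circularity) to realize $E\simeq L^{-1}\vert_{C_{i,j}}$ as a line bundle on the reduced nodal chain, read off the multidegree $\delta_m$ from $D_\ell\cdot C_m=\delta_{\ell m}$, and then test stability against the restriction quotients $E\twoheadrightarrow E\otimes\calO_{C_m}\simeq\calO_{C_m}(\delta_m)$ and the complementary-ideal subsheaves $E\otimes\calI_{Z_m}\simeq\calO_{C_m}(\delta_m-\nu_m)$. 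These are in fact the same comparison sheaves the paper manufactures (e.g.\ $E\vert_{C_\ell}\simeq\calO_{C_\ell}(e_\ell+1)$ for $\ell<j$, and $F_{i,i}\simeq\calO_{C_i}(e_i)$), so the slope estimates coincide; what your version buys is a uniform treatment of interior and end components through the shift $\delta_m=e_m+1$ (resp.\ $e_j$) and the node count $\nu_m$, making the asymmetry in \eqref{eq:tailsheafA} versus \eqref{eq:midsheafA} transparent, at the mild cost of leaning on the stronger structural Lemma~\ref{finitewallAB}, whereas the paper's argument stays within the filtration/Ext-duality toolkit and keeps this proposition independent of that lemma. Your local computation of $\calI_{Z_m/C_{i,j}}$ at the nodes and the strictness of the slope inequalities (kernel and quotient both nonzero with positive $\omega$-degree since $i<j$) are handled correctly.
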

\begin{proof}
	First suppose $i<\ell<j$. Let $z\in H^0(\scrO_{C_\ell}(1))$ be a defining section of $p_\ell$ and let $\zeta\colon \calF_\ell \to \scrO_{C_\ell}(e_\ell+1)$ be multiplication by $z$. Using the isomorphism \eqref{eq:tensisomA}, one  has 
	\begin{align}
		\zeta \circ \psi_{\ell+1} = 0\ . 
	\end{align}
	Using Lemma~\ref{lem:extlemm}, this implies that the morphism $\zeta\colon \calF_\ell \to \scrO_{C_\ell}(e_{\ell}+1)$ extends to a morphism $\calG_{\ell, j}\to 
	\scrO_{C_\ell}(e_{\ell}+1)$ which fits in the commutative diagram
	\begin{align}
		\begin{tikzcd}[ampersand replacement = \&]
			\calF_{\ell} \arrow[r] \arrow{d}{\zeta} \& \calG_{\ell,j} \arrow[d] \\
			\scrO_{C_\ell}(e_\ell+1) \arrow{r}{\bf 1} \&   \scrO_{C_\ell}(e_\ell+1)
		\end{tikzcd}\ .
	\end{align}	
	Since $\calG_{\ell,j}$ is a quotient of $\calF$, one obtains a nonzero morphism $\calF\to  \scrO_{C_\ell}(e_\ell+1)$. The target is $\omega$-stable of slope 
	\begin{align}
		\mu_\omega(\scrO_{C_\ell}(e_\ell+1))=\frac{e_\ell+2}{\zeta_\ell}\ .
	\end{align}
	Therefore one obtains 
	\begin{align}
		\zeta_\ell\mu_\omega(\calF) < e_\ell+2\ .
	\end{align}
	Next, note that $\calF_{\ell+1}$ is a quotient of $\calF_{i,\ell+1}$. Again, let $\eta\colon \scrO_{C_{\ell+1}}(e_{\ell+1}-1)\hookrightarrow \calF_{\ell+1}$ be multiplication by $z$, which is obviously injective. Then note that 
	\begin{align}
		\psi_{\ell+1}\circ \eta =0\ .
	\end{align}
	Then Lemma~\ref{lem:extlemm} shows that $\eta$ lifts to a morphism 
	$\scrO_{C_{\ell+1}}\to \calF_{i,\ell+1}$ which fits in the commutative diagram
	\begin{align}
		\begin{tikzcd}[ampersand replacement = \&]
			\scrO_{C_{\ell+1}}(e_{\ell+1}-1)\arrow{d}{\eta}  \arrow{r}{\bf 1} \&  
			\scrO_{C_{\ell+1}}(e_{\ell+1}-1)\arrow[d] \\ 
			\calF_{\ell+1} \arrow[r]  \& \calF_{i, \ell+1}
		\end{tikzcd}\ .
	\end{align}	
	Since $\calF_{i, \ell+1}$ is a subsheaf of $\calF$, one obtains 
	\begin{align}
		e_{\ell+1} < \zeta_{\ell+1} \mu_\omega(\calF)\ .
	\end{align}
	The proof of \eqref{eq:tailsheafA} is completely analogous. 	
\end{proof}

\begin{corollary}
	Let $\omega$ be a $\Q$-polarization of $Y$. Let $\calF$ be a $\omega$-stable one-dimensional pure coherent sheaf on $Y$ with positive slope and 
	\begin{align}
		\sfch_1(E)=C_{i,j}
	\end{align}
	for some $1\leq i < j\leq N$. Then
	\begin{align}
			e_\ell \geq -1\ , \quad \ i \leq \ell <j-1\ , \quad e_j \geq 0\ .
	\end{align}
\end{corollary}
%

\begin{corollary}\label{finitewallcorD} 
	Let $\omega= \sum_{i=1}^{N}D_i$ be the symmetric polarization. Let $\calF$ be an $\omega$-stable pure one-dimensional sheaf on $Y$ with proper support such that $\mu_\omega(E)=1$. Then $\calF\simeq \scrO_{C_i}$ for some $1\leq i \leq N$. 
\end{corollary}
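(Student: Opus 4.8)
The plan is to deduce this from Proposition~\ref{finitewallA} and Proposition~\ref{finitewallB} after computing the numerical data attached to the symmetric polarization. First I would observe that for $\omega = \sum_{i=1}^{N-1} D_i$ one has $\zeta_i = \langle\omega,C_i\rangle = 1$ for every $1\leq i\leq N-1$: since $L_k = \calO_Y(D_k)$, the relations $\sfch_1(L_k)\cdot C_i = \delta_{ki}$ used in the proof of Lemma~\ref{dimvectlemm} give $D_k\cdot C_i = \delta_{ki}$ for $1\leq i,k\leq N-1$, hence $\omega\cdot C_i = \sum_{k=1}^{N-1}\delta_{ki} = 1$. In particular $\omega$ is a polarization by Remark~\ref{rem:pol}, and for a compactly supported one-dimensional sheaf $E$ with $\sfch_1(E) = \sum_i m_i C_i$ one gets $\sfch_1(E)\cdot\omega = \sum_i m_i$; since $\mu_\omega(E)$ only makes sense when $\sfch_1(E)\cdot\omega\neq 0$, the hypothesis $\mu_\omega(E) = 1$ already forces $E$ to be one-dimensional.

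Next, as $E$ is $\omega$-stable, pure of dimension one and compactly supported, Proposition~\ref{finitewallA} applies and yields $\sfch_1(E) = C_{i,j}$ for some $1\leq i\leq j\leq N-1$, with $E$ scheme-theoretically supported on the reduced divisor $C_{i,j}$. I would then split into two cases. If $i<j$, Proposition~\ref{finitewallB} gives the bound
\begin{align}
	\zeta_i\,\mu_\omega(E) - 2 \; < \; e_i \; < \; \zeta_i\,\mu_\omega(E) - 1
\end{align}
for the degree $e_i$ of the first line bundle in the chain associated with $E$; substituting $\zeta_i = 1$ and $\mu_\omega(E) = 1$ this becomes $-1 < e_i < 0$, which has no integer solution. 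Hence the case $i<j$ is impossible.

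Therefore $i=j$, so $\sfch_1(E) = C_i$, and by the Remark following Proposition~\ref{finitewallA} we have $E\simeq \calO_{C_i}(d)$ for some $d\in\Z$. Computing $\mu_\omega(E) = \chi(\calO_{C_i}(d))/(C_i\cdot\omega) = (d+1)/\zeta_i = d+1$ and using $\mu_\omega(E) = 1$ forces $d = 0$, i.e.\ $E\simeq\calO_{C_i}$. This is essentially a direct specialization of the two preceding propositions, so I do not expect a genuine obstacle; the only points needing care are the computation $\zeta_i = 1$ and the elementary remark that an open interval of length one (here $(-1,0)$) contains no integer.
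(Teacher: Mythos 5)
Your proposal is correct and follows essentially the same route as the paper: apply Proposition~\ref{finitewallA}, rule out $i<j$ by the first inequality in \eqref{eq:tailsheafA} (an open unit interval containing no integer), and then force the degree to vanish in the case $i=j$. The only cosmetic difference is that the paper records the excluded interval as $(-2,-1)$ where the substitution $\zeta_i=\mu_\omega(E)=1$ actually gives $(-1,0)$ as in your computation, but the contradiction is the same either way, and your explicit treatment of $\zeta_i=1$ and of the case $i=j$ just spells out what the paper leaves implicit.
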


\begin{proof}
	Suppose $\ch_1(\calF)=C_{i,j}$ for some $1\leq i < j \leq N$. Under the present assumptions, the first inequality in \eqref{eq:tailsheafA} implies that 
	\begin{align}
		-2< e_i <-1\ ,
	\end{align}
	which is a contradiction. Hence one must have $i=j$ and $e_j=0$. 
\end{proof}

\begin{remark}\label{rem:stable-dimension-vector}
	By Proposition~\ref{finitewallF}, a $\omega$-stable properly supported sheaf $\calF$ on $Y$ of positive slope $\mu$ and first Chern class $\ch_1(\calF)=C_{i, j}$ corresponds to a perverse coherent sheaf $E\in \catPps(Y/X)$, which is semistable with respect to $\zeta=(\zeta_1, \ldots, \zeta_N, \zeta_{N+1})$, with $\zeta_i\coloneqq \langle \omega, C_i\rangle$ and $\zeta_{N+1}\coloneqq 1/\mu -\sum_{k=1}^N \zeta_k$, and degree zero. Moreover, thanks to Lemma~\ref{dimvectlemm}, we get
	\begin{align}
		d_{N+1}(E)&=\chi(\scrO_Y, \calF)= \mu \sum_{k=i}^j \zeta_k\ , \\[2pt]
		d_k(E)&=\begin{cases}
			d_{N+1}(E)-1 & \text{for } i\leq k\leq j\ ,\\
			d_{N+1}(E) & \text{otherwise} \ ,
		\end{cases}
	\end{align}
	for $1\leq k\leq N$. If we set $\delta=(1,\ldots, 1)$ and we denote by $\alpha_k$ the $k$-th coordinate vector of $\Z^{N+1}$, we obtain
	\begin{align}
		\bfd(E)=\mu \sum_{k=i}^j \zeta_k \delta-\sum_{k=i}^j \alpha_k\ .
	\end{align}
\end{remark}

\subsubsection{Chains and semistable sheaves}\label{chainsect}

Let $\omega$ be a $\Q$-polarization of $Y$. Set
\begin{align}
	\zeta_i\coloneqq \langle \omega, C_i\rangle
\end{align}
for $1\leq i \leq N$. Fix $\mu\in \Q_{>0}$. We denote by $\sfS_\mu$ the set of isomorphism classes of $\omega$-stable properly supported one-dimensional sheaves of slope $\mu$. Given a representative $\calF_\alpha$ of an equivalence class $\alpha\in \sfS_\mu$, Proposition~\ref{finitewallA} and Corollary~\ref{cor:finitewallA} guarantee that
\begin{align}
	\sfch_1(\calF_\alpha) = C_{i_\alpha, j_\alpha} 
\end{align}
for integers $1 \leq i_\alpha \leq j_\alpha \leq N$.
\begin{proposition}\label{stabextlemm} 
	Let $\calF_\alpha, \calF_\beta$ be representatives of the equivalence classes $\alpha, \beta\in \sfS_\mu$ respectively and let 
	\begin{align}
		\sfch_1(\calF_\alpha) = C_{i_\alpha, j_\alpha}\quad \text{and} \quad \sfch_1(\calF_\beta) = C_{i_\beta, j_\beta}\ .
	\end{align}
	for integers $1 \le i_\alpha \le j_\alpha \le N$ and $1 \le i_\beta \le j_\beta \le N$. Then one has:
	\begin{enumerate}[label=(\roman*)]\itemsep=0.2cm
		\item \label{stabextlemm:iso_classes} If $i_\alpha = i_\beta$ or $j_\alpha = j_\beta$  then $\alpha = \beta$, hence $\calF_\alpha \simeq \calF_\beta$. In particular any isomorphism class $\alpha$ is uniquely determined by the pair $(i_\alpha, j_\alpha)$.
		
		\item \label{stabextlemm:ext} Moreover,
		\begin{align}\label{eq:extableB} 
		\Ext^p_Y(\calF_\alpha, \calF_\beta) \simeq  \begin{cases}
		\C & \text{if } p=0\ \text{and}\ \alpha = \beta \ , \\[4pt]
		\C & \text{if } p=1\ \text{and either }\ i_\beta= j_\alpha+1\ \text{or}\ i_\alpha = j_\beta+1\ ,\\[4pt]
		0 & \text{otherwise}\ .
		\end{cases}
		\end{align}
	\end{enumerate}
\end{proposition}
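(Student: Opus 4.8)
The plan is to leverage the structural results already established: each $\omega$-stable sheaf $E_\alpha$ with $\mathsf{ch}_1(E_\alpha) = C_{i_\alpha, j_\alpha}$ is scheme-theoretically supported on the reduced connected divisor $C_{i_\alpha, j_\alpha}$ (Proposition~\ref{finitewallA}), and by Lemma~\ref{finitewallAB} there is a line bundle $L$ on $Y$ such that $E_\alpha \otimes L \simeq \calO_{C_{i_\alpha, j_\alpha}}$. For part~\ref{stabextlemm:iso_classes}, I would first observe that twisting by a line bundle is an autoequivalence of $\sfCoh_\sfc(Y)$ preserving $\omega$-stability (it shifts slopes uniformly), so the classification of $\omega$-stable one-dimensional sheaves supported on a fixed reduced divisor $C_{i,j}$ up to twist reduces to the statement that $\calO_{C_{i,j}}$ is the \emph{unique} such sheaf after normalizing the degrees $(e_i, \ldots, e_j)$ as in Lemma~\ref{finitewallAB}. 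Combined with the degree constraints of Proposition~\ref{finitewallB}, which pin down the $e_\ell$ uniquely once $\mu$ and $\omega$ are fixed (the open intervals in \eqref{eq:tailsheafA}--\eqref{eq:midsheafA} have length~$1$, hence contain at most one integer), this shows that the isomorphism class $\alpha$ is determined by the pair $(i_\alpha, j_\alpha)$ alone. Then if $i_\alpha = i_\beta$ and $j_\alpha \neq j_\beta$, say $j_\alpha < j_\beta$, the sheaf $E_\beta$ restricted to $C_{i_\alpha, j_\alpha}$ would produce a destabilizing sub- or quotient sheaf (using Lemma~\ref{structlemma} to peel off the tail $F_{j_\alpha+1, j_\beta}$), contradicting stability; symmetrically for $j_\alpha = j_\beta$. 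This forces $(i_\alpha, j_\alpha) = (i_\beta, j_\beta)$, hence $\alpha = \beta$.

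For part~\ref{stabextlemm:ext}, the $p=0$ case is immediate: $\Hom_Y(E_\alpha, E_\beta)$ is $\C$ when $\alpha = \beta$ by $\omega$-stability, and when $\alpha \neq \beta$ any nonzero morphism would have image destabilizing one of the two (its slope equals $\mu$, and stability of both source and target forbids a proper subobject of the same slope), so it vanishes. For the higher $\Ext$ groups, I would compute the Euler pairing via Riemann–Roch: since both sheaves are one-dimensional with $\mathsf{ch}_1(E_\alpha) = C_{i_\alpha,j_\alpha}$ and $\mathsf{ch}_1(E_\beta) = C_{i_\beta,j_\beta}$, one has $\chi(E_\alpha, E_\beta) = -C_{i_\alpha,j_\alpha} \cdot C_{i_\beta,j_\beta}$, and Serre duality gives $\Ext^2_Y(E_\alpha, E_\beta) \simeq \Hom_Y(E_\beta, E_\alpha)^\vee$, which is $\C$ if $\alpha=\beta$ and $0$ otherwise by the $p=0$ analysis. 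So it remains to (i) compute the intersection number $C_{i_\alpha,j_\alpha} \cdot C_{i_\beta,j_\beta}$ using the $A_{N-1}$ Cartan matrix of Remark~\ref{rem:mckayALE}, which is a short combinatorial check, and (ii) show $\Ext^1_Y(E_\alpha, E_\beta)$ has the stated dimension. The combinatorics give $C_{i_\alpha,j_\alpha} \cdot C_{i_\beta,j_\beta} = -2$ when $\alpha = \beta$, $= 1$ when the intervals $[i_\alpha,j_\alpha]$ and $[i_\beta,j_\beta]$ are adjacent (i.e.\ $i_\beta = j_\alpha+1$ or $i_\alpha = j_\beta+1$) and otherwise disjoint, $= 0$ when they are disjoint and non-adjacent, and $\le 0$ (in fact one must check it is exactly such that the $\Ext^1$ count works out) when they overlap. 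For disjoint non-adjacent intervals all $\Ext^p$ vanish since the supports are disjoint. For adjacent intervals, $\chi(E_\alpha, E_\beta) = -1$, $\Ext^0 = \Ext^2 = 0$, so $\Ext^1 \simeq \C$. For overlapping-but-distinct intervals one gets $\chi(E_\alpha, E_\beta) \le 0$ with $\Ext^0 = \Ext^2 = 0$; here I would argue $\Ext^1 = 0$ directly by a support/restriction argument: a nonsplit extension of $E_\beta$ by $E_\alpha$ would be an $\omega$-semistable sheaf of slope $\mu$ supported on $C_{i_\alpha,j_\alpha} \cup C_{i_\beta,j_\beta}$, and analyzing its Jordan–Hölder factors via Lemma~\ref{structlemma} together with the uniqueness from part~\ref{stabextlemm:iso_classes} shows no such nonsplit extension exists when the intervals merely overlap (the "chain" gluing of Lemma~\ref{structlemma} only produces extensions across a single transverse intersection point $p_\ell$, which corresponds precisely to adjacency).

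The main obstacle I anticipate is the overlapping-intervals case for $\Ext^1$: the Euler characteristic alone does not force vanishing there (one needs $C_{i_\alpha,j_\alpha}\cdot C_{i_\beta,j_\beta}$ to be computed carefully — for two overlapping intervals it is $\le 0$, possibly strictly negative, so $\chi \ge 0$ and the bookkeeping $\dim\Ext^1 = \dim\Ext^0 + \dim\Ext^2 - \chi$ could a priori be positive). The clean way around this is to show such overlapping configurations simply cannot both be $\omega$-stable of the same slope unless one is contained in the other in a way that contradicts uniqueness — or, more robustly, to invoke the explicit recursive extension structure of Lemma~\ref{structlemma}: an $\omega$-stable sheaf supported on a connected chain of curves is "rigid" in the sense that its only gluings to another stable sheaf happen at the two endpoint intersection points, which is exactly the adjacency condition. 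Making this last point fully rigorous — that $\Ext^1_Y(E_\alpha, E_\beta)$ is concentrated at the transverse intersection points of the supports and contributes one dimension per shared endpoint — is where I would spend most of the effort, likely by a local computation near each $p_\ell$ using \eqref{eq:tensisomA} and the $\calO_{p_\ell}$-supported $\mathrm{Tor}$ and $\calExt$ sheaves, patched with a Mayer–Vietoris / local-to-global spectral sequence argument.
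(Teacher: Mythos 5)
Your skeleton for part~\ref{stabextlemm:ext} (compute $\Hom$ by stability, $\Ext^2$ by Serre duality, and $\chi(E_\alpha,E_\beta)=-C_{i_\alpha,j_\alpha}\cdot C_{i_\beta,j_\beta}$ by Riemann--Roch) is exactly the paper's argument, but the two places where you deviate are where the gaps sit. The intersection-number combinatorics you defer (``one must check'') is the whole point, and your guess for the overlapping case is wrong in a way that matters: one has $C_{i,j}\cdot C_{\ell,m}=-\delta_{i,\ell}+\delta_{i,m+1}+\delta_{j+1,\ell}-\delta_{j+1,m+1}$, so two intervals that overlap (nested or staggered) without sharing an endpoint give product exactly $0$, not ``$\le 0$, possibly strictly negative''. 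Hence $\chi=0$, and since $\Ext^0=\Ext^2=0$ for $\alpha\neq\beta$, one gets $\Ext^1=0$ at once; the local-to-global/$\calExt$-at-$p_\ell$ analysis where you plan to spend most of your effort is unnecessary, and your fallback plan of proving that overlapping supports cannot both occur in $\sfS_\mu$ is both unneeded and not what the proposition asserts (only a \emph{shared endpoint} is excluded). The only nonzero off-diagonal values are $+1$ for adjacent intervals (giving $\Ext^1\simeq\C$, as you say) and $-1$, resp.\ $-2$, when exactly one, resp.\ both, endpoints coincide.

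Part~\ref{stabextlemm:iso_classes} is where your argument genuinely fails. The claim that the intervals in \eqref{eq:tailsheafA}--\eqref{eq:midsheafA} have length one is false for the interior indices: \eqref{eq:midsheafA} is an open interval of length two, so the degrees $e_\ell$ are not pinned down by $(\omega,\mu)$ and the support, and the ``unique after normalization'' step collapses (also, twisting by $L$ does not shift $\omega$-slopes uniformly on $\sfCoh_\sfc(Y)$, so preservation of stability under the twist would itself need an argument). The destabilization sketch does not close either: if $i_\alpha=i_\beta$ and $j_\alpha<j_\beta$, peeling off the head $F_{i_\beta,j_\alpha}\subset E_\beta$ only gives $\mu_\omega(F_{i_\beta,j_\alpha})<\mu$ by stability of $E_\beta$, which is perfectly consistent and produces no contradiction without further input. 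The paper gets (i) for free from the bookkeeping you already set up: a shared endpoint with $\alpha\neq\beta$ gives $C_{i_\alpha,j_\alpha}\cdot C_{i_\beta,j_\beta}\in\{-1,-2\}$, hence $\chi(E_\alpha,E_\beta)\ge 1$, while $\alpha\neq\beta$ forces $\Hom=\Ext^2=0$ and therefore $\chi=-\dim\Ext^1\le 0$, a contradiction; this simultaneously yields the ``determined by $(i_\alpha,j_\alpha)$'' statement. So the repair is simply to carry out the intersection computation in all cases and let it drive both (i) and (ii), instead of the geometric arguments you propose.
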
 

\begin{proof}
	Since all $\calF_\alpha$ are stable of the same slope one has
	\begin{align}\label{eq:extableA}
		\Hom_Y(\calF_\alpha, \calF_\beta) = \begin{cases}
		\C& \text{if } \alpha = \beta\ , \\[4pt] 
		0 & \text{otherwise}\ .
		\end{cases}
	\end{align}
	Hence, using Serre duality, 
	\begin{align}\label{eq:chitableA}
		\chi(\calF_\alpha, \calF_\beta) =\begin{cases}
		2-\dim\Ext^1_Y(\calF_\alpha, \calF_\beta) & \text{if } \alpha = \beta\ , \\[4pt] 
		-\dim\Ext^1_Y(\calF_\alpha, \calF_\beta)  & \text{otherwise}\ .
		\end{cases}
	\end{align}
	Moreover, equation~\eqref{eq:vanextA} shows that $\Ext^1_Y(\calF_\alpha, \calF_\beta)=0$ if $\alpha=\beta$. However, by the Riemann-Roch theorem, 
	\begin{align}\label{eq:chiRR}
		\chi(\calF_\alpha, \calF_\beta) = -C_{i_\alpha, j_\alpha}\cdot C_{i_\beta, j_\beta}\ ,
	\end{align}
	where $C_{i,j}$ is the divisor introduced in equation~\eqref{eq:Cij} for $1\leq i, j\leq N-1$.
	
	For any pairs $(i,j)$, $(\ell,m)$ with $i\leq j$ and $\ell\leq m$ one has the following cases: 
	\begin{enumerate}\itemsep0.2cm
		
		\item If $i=\ell$ and $j=m$, then $C_{i, j}\cdot C_{\ell,m} =-2$. 
		
		\item \label{item:(ii)} If $i=\ell$ and $j\neq m$ or $j=m$ and $i\neq \ell$, then $C_{i, j}\cdot C_{\ell,m} =-1$.
		
		\item If $\ell=j+1$ or $i=m+1$, then $C_{i, j}\cdot C_{\ell,m} =1$. In this case the divisors $C_{i,j}$ and $C_{\ell,m}$ will be called \textit{linked}.
		
		\item In all other cases, $C_{i, j}\cdot C_{\ell,m} =0$.
	\end{enumerate}

	Under the stated conditions, the divisors $C_{i_\alpha, j_\alpha}$ and $C_{i_\beta, j_\beta}$ cannot satisfy case \eqref{item:(ii)} above, since that would lead to a contradiction with equations \eqref{eq:chitableA} and \eqref{eq:chiRR}.	This proves statement \ref{stabextlemm:iso_classes}. 
	
	For the remaining cases, Equations \eqref{eq:extableA}, \eqref{eq:chitableA} and \eqref{eq:chiRR} yield: 
	\begin{align}\label{eq:extableC}
		\Ext^1_Y(\calF_\alpha, \calF_\beta) \simeq \begin{cases}
		\C & \text{if } i_\beta=j_\alpha+1\ \text{or}\ i_\alpha=j_\beta+1, \\
		0 & \text{otherwise} \ . 
		\end{cases}
	\end{align}
\end{proof}

Note that for $\alpha, \beta\in \sfS_\mu$, one can either have $\Ext^1(\calF_\alpha, \calF_\beta)=\C$ or $\Ext^1(\calF_\alpha, \calF_\beta)=0$. This will allow us to divide $\sfS_\mu$ into disjoint subsets and order the equivalence classes in each subset, as we shall explain now. First, we introduce the following notion.
\begin{definition}\label{chaindef} 
	An ordered sequence ${\boldsymbol \alpha}=({\alpha_1}, \ldots,{\alpha_s})\in {\sf S}_\mu$  will be called a \textit{chain} if and only if 
	\begin{enumerate}[label=(\roman*)]\itemsep0.2cm
	
		\item \label{chaindef:concatenation} $j_{\alpha_t}=i_{\alpha_{t+1}}$ for all $1\leq t \leq s-1$, and 	
			
		\item it is not a strict subsequence of any other ordered sequence of elements of ${\sf S}_\mu$ satisfying Proposition~\ref{stabextlemm}-\ref{chaindef:concatenation}.
		
	\end{enumerate}
	For simplicity, set  $\Delta_t = C_{i_{\alpha_t}, j_{\alpha_t}}$, $1\leq t \leq s$. 
\end{definition}

\begin{example}\label{chainexample} 
	Suppose $\omega= \sum_{i=1}^{N} D_i$ is the symmetric polarization and let $\mu=1$. By Corollary~\ref {finitewallcorD}, a slope one properly supported sheaf $\calF$ on $Y$ is $\omega$-stable if and only if $\calF\simeq \scrO_{C_i}$ for some $1\leq i \leq N$. Therefore in this case the set of stable objects is $\{\scrO_{C_i}\,|\, 1\leq i \leq N\}$ and they form a single chain of length $N$. 
\end{example}

\begin{remark}\label{partitionchains}
	Proposition~\ref{stabextlemm} implies that there is a unique partition into pairwise disjoint subsets 
	\begin{align}
		\sfS_\mu = \bigcup_{c=1}^{C_\mu} \{\, \alpha_{c,1}, \ldots, 
	\alpha_{c,s_c}\, \}
	\end{align}
	such that for each $1\leq c\leq C_\mu$ the ordered sequence 
	\begin{align}	
	\balpha_c \coloneqq (\alpha_{c,1}, \ldots, \alpha_{c, s_c})
	\end{align}
	is a chain for all $1\leq c \leq C_\mu$. Let $\lambda_\mu\coloneqq (s_1, s_2, \ldots, s_{C_\mu})$ denote the induced {\it ordered} partition of $N_\mu$.
\end{remark}

In order to formulate some further consequences of Proposition~\ref {stabextlemm}, let us introduce the following definition:

\begin{definition}\label{JHchain}	
	An $\omega$-semistable properly supported sheaf $\calF$ on $Y$ will be said to belong to a chain $\balpha_c$, i.e. $\calF\in \balpha_c$, if and only if each of its Jordan-Hölder factors belongs to one of the isomorphism classes $\alpha_{c, t}$,  for $1\leq t \leq s_c$. 	
\end{definition} 

We have the following two results.
\begin{corollary}\label{chaincoroA} 
Let $\calF_1, \calF_2$ be compactly supported $\omega$-semistable sheaves on $Y$ of slope $\mu$ which belong to two different chains respectively. 
Then one has: 
\begin{align}
	\Ext^p_Y(\calF_1, \calF_2) =0=\Ext^p_Y(\calF_2, \calF_2)\ ,
\end{align}
for all $p\in \Z$. 
\end{corollary}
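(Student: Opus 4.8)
\textbf{Proof plan for Corollary~\ref{chaincoroA}.} The plan is to reduce the statement to the stable case and then invoke the vanishing of $\mathsf{Ext}^1$ from Proposition~\ref{stabextlemm}-\ref{stabextlemm:ext} together with Serre duality. First I would observe that since $E_1$ and $E_2$ are one-dimensional compactly supported sheaves on the smooth surface $Y$, the only potentially nonvanishing groups are $\mathsf{Ext}^p_Y$ for $p=0,1,2$, and Serre duality (using $K_Y \simeq \calO_Y$ near the exceptional locus, as $Y$ is a crepant resolution) gives $\mathsf{Ext}^2_Y(E_i, E_j) \simeq \mathsf{Hom}_Y(E_j, E_i)^\vee$. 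So it suffices to show $\mathsf{Hom}_Y(E_1, E_2) = 0$ and $\mathsf{Ext}^1_Y(E_1, E_2) = 0$ (and symmetrically with the roles of $1,2$ swapped; the statement as written has a typo, $\mathsf{Ext}^p_Y(E_2,E_2)$ should read $\mathsf{Ext}^p_Y(E_2,E_1)$).

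Next I would pass to Jordan--Hölder factors. Since $E_1 \in \balpha_{c}$ and $E_2 \in \balpha_{c'}$ with $c \neq c'$, every JH factor of $E_1$ is isomorphic to some $E_{\alpha}$ with $\alpha \in \{\alpha_{c,1},\ldots,\alpha_{c,s_c}\}$, and every JH factor of $E_2$ to some $E_{\beta}$ with $\beta \in \{\alpha_{c',1},\ldots,\alpha_{c',s_{c'}}\}$. Both $\mathsf{Hom}$ and $\mathsf{Ext}^1$ are computed by a spectral sequence (or a dévissage via the long exact sequences attached to the JH filtrations), so it is enough to prove $\mathsf{Hom}_Y(E_\alpha, E_\beta) = 0$ and $\mathsf{Ext}^1_Y(E_\alpha, E_\beta) = 0$ for all such $\alpha, \beta$ lying in distinct chains. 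The $\mathsf{Hom}$ vanishing is immediate from \eqref{eq:extableA}: $\alpha \neq \beta$ since they belong to different chains, so $\mathsf{Hom}_Y(E_\alpha, E_\beta) = 0$. For the $\mathsf{Ext}^1$ vanishing I would invoke \eqref{eq:extableB}/\eqref{eq:extableC}: $\mathsf{Ext}^1_Y(E_\alpha, E_\beta) \neq 0$ forces either $i_\beta = j_{\alpha} + 1$ or $i_\alpha = j_\beta + 1$, i.e.\ the divisors $C_{i_\alpha,j_\alpha}$ and $C_{i_\beta,j_\beta}$ are linked.

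The remaining point — and the only genuinely combinatorial step — is that $\alpha, \beta$ in different chains cannot be linked. This is exactly the maximality condition in Definition~\ref{chaindef}-\ref{chaindef:concatenation}: if $j_\alpha + 1 = i_\beta$, then one could extend (or splice) the chain containing $\alpha$ by appending $\beta$, contradicting that $\balpha_c$ is not a strict subsequence of a longer concatenating sequence — more precisely, contradicting the uniqueness of the partition into chains in Remark~\ref{partitionchains}, since $\alpha$ and $\beta$ would then lie in a common concatenating sequence and hence in the same chain. The symmetric case $i_\alpha = j_\beta + 1$ is identical with the roles reversed. Running the same argument with $\alpha,\beta$ interchanged and feeding the results back through Serre duality then kills $\mathsf{Ext}^2$ as well, completing the proof. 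I expect this last combinatorial step to be the main (though still routine) obstacle: one must be careful that "linked" in the sense of Proposition~\ref{stabextlemm} is precisely the adjacency relation generating the chains, so that being in distinct chains is equivalent to being unlinked.
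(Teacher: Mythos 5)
Your argument is correct and is exactly the intended one: the paper states Corollary~\ref{chaincoroA} without proof, as an immediate consequence of Proposition~\ref{stabextlemm} via d\'evissage along Jordan--H\"older filtrations (Definition~\ref{JHchain}), with linked classes forced into a common chain by the maximality/uniqueness of the partition, precisely as you argue, and with the $p=0,2$ vanishing already contained in \eqref{eq:extableB}. You are also right about the two textual glitches: $\Ext^p_Y(E_2,E_2)$ should read $\Ext^p_Y(E_2,E_1)$, and the concatenation condition in Definition~\ref{chaindef} is to be read as adjacency $i_{\alpha_{t+1}}=j_{\alpha_t}+1$ (as Example~\ref{chainexample} and the comparison with $\Pi_{A_s}$ in \S\ref{chainquivsect} confirm), which is the reading your combinatorial step uses.
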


\begin{corollary}\label{chaincorB} 
Let $\calF$ be an $\omega$-semistable compactly supported sheaf on $Y$ of slope $\mu$. For each $1\leq c \leq C_\mu$, let $\calF_c$ be the maximal 
$\omega$-semistable subsheaf of $\calF$ of slope $\mu(\calF_c)=\mu(\calF)$ which belongs to the chain $(\alpha_{c,1}, \ldots, \alpha_{c,s_c})$. Then there is an isomorphism 
\begin{align}\label{eq:ssdecomp}
	\calF\simeq \bigoplus_{c=1}^{C_\mu}\, \calF_c\ . 
\end{align}
\end{corollary}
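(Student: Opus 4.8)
The plan is to proceed by induction on the length of a Jordan--Hölder filtration of $E$ in the category of $\omega$-semistable sheaves of slope $\mu$, using Corollary~\ref{chaincoroA} to split off one chain at a time. First I would note that since $E$ is $\omega$-semistable of slope $\mu$, it admits a Jordan--Hölder filtration whose factors are $\omega$-stable of slope $\mu$, hence each factor lies in exactly one of the chains $\balpha_1, \dots, \balpha_{C_\mu}$ by Remark~\ref{partitionchains} (more precisely by Proposition~\ref{stabextlemm}-\ref{stabextlemm:iso_classes}, which shows the isomorphism classes in $\sfS_\mu$ are partitioned into chains). For a fixed chain index $c$, let $E_c \subseteq E$ be the maximal subobject all of whose JH factors lie in $\balpha_c$; this is well-defined as the sum of all such subobjects, using that an extension of two sheaves with JH factors in $\balpha_c$ again has all JH factors in $\balpha_c$ (the category of semistable sheaves of slope $\mu$ is abelian, so subobjects and extensions behave as expected).

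The key step is to show that the natural map $\bigoplus_{c=1}^{C_\mu} E_c \to E$ is an isomorphism. I would argue that this map is injective because the $E_c$ have no common JH factors (distinct chains are disjoint subsets of $\sfS_\mu$), so any nonzero intersection of $E_{c}$ with $\sum_{c' \ne c} E_{c'}$ would have a JH factor simultaneously in $\balpha_c$ and in $\bigcup_{c' \ne c}\balpha_{c'}$, a contradiction. For surjectivity, let $Q \coloneqq E / \bigoplus_c E_c$; it is again $\omega$-semistable of slope $\mu$, so if $Q \ne 0$ it has an $\omega$-stable subobject $S$ of slope $\mu$, lying in some chain $\balpha_{c_0}$. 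Pulling back the extension $0 \to \bigoplus_c E_c \to E \to Q \to 0$ along $S \hookrightarrow Q$ gives a short exact sequence $0 \to \bigoplus_c E_c \to \widetilde E \to S \to 0$ with $\widetilde E \subseteq E$. The class of this extension lives in $\Ext^1_Y(S, \bigoplus_c E_c) = \bigoplus_c \Ext^1_Y(S, E_c)$; by Corollary~\ref{chaincoroA} the summands with $c \ne c_0$ vanish, so the extension is induced from one in $\Ext^1_Y(S, E_{c_0})$, whence $\widetilde E \cong \bigl(\bigoplus_{c \ne c_0} E_c\bigr) \oplus \widetilde E_{c_0}$ with $\widetilde E_{c_0}$ an extension of $S$ by $E_{c_0}$, so $\widetilde E_{c_0} \subseteq E$ has all JH factors in $\balpha_{c_0}$ and strictly contains $E_{c_0}$, contradicting maximality of $E_{c_0}$. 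Hence $Q = 0$.

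Once the direct sum decomposition is established, it remains to identify each summand with the maximal $\omega$-semistable subsheaf of $E$ of slope $\mu$ belonging to $\balpha_c$: this is exactly the defining property of $E_c$, so there is nothing further to check. The main obstacle I anticipate is the bookkeeping around the $\Ext^1$-vanishing: one must be careful that Corollary~\ref{chaincoroA} is applied to semistable sheaves in \emph{different} chains (which is the case for $S$ in $\balpha_{c_0}$ versus $E_c$ in $\balpha_c$ for $c \ne c_0$), and that the splitting of an extension class $0$ in a direct summand really does produce an internal direct sum decomposition of $\widetilde E$ as a subsheaf of $E$ — this uses that $\Ext^1_Y(S, -)$ commutes with the finite direct sum, together with the fact that pushing out along the projections recovers $\widetilde E$. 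Everything else is formal manipulation in the abelian category of $\omega$-semistable slope-$\mu$ sheaves.
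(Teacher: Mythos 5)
Your argument is correct, and it relies on the same key input as the paper, namely the vanishing of $\Hom$ and $\Ext^1$ between semistable sheaves belonging to different chains (Corollary~\ref{chaincoroA}); but it is organized differently. The paper proceeds by induction on the length of a Jordan--H\"older filtration of $E$: it splits off one $\omega$-stable subsheaf $F$, applies the inductive hypothesis to $E/F$, and then uses the snake lemma together with Corollary~\ref{chaincoroA} to detach the summands of $E/F$ lying in chains other than that of $F$. You instead work directly with the maximal subsheaves $E_c$, prove that their sum is direct by comparing Jordan--H\"older factors (the chains are disjoint subsets of $\sfS_\mu$), and obtain surjectivity by contradiction: a stable subobject $S\subseteq E/\bigoplus_c E_c$ in a chain $\balpha_{c_0}$ pulls back to an extension of $S$ by $\bigoplus_c E_c$ whose class lies in $\Ext^1_Y(S,E_{c_0})$ only, producing a subsheaf of $E$ with all factors in $\balpha_{c_0}$ strictly containing $E_{c_0}$. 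Your route has the small advantage of identifying the summands with the maximal subsheaves named in the statement at no extra cost, whereas the paper's induction yields an abstract decomposition whose summands must then be recognized as the $E_c$; the paper's version is slightly shorter because the inductive step handles only one stable factor at a time. The one delicate point, which you flag yourself, is passing from the vanishing of the extension class components with $c\neq c_0$ to an \emph{internal} splitting inside $E$; this is indeed fine, and can be done most economically by noting that $\widetilde E/E_{c_0}$ is a split extension of $S$ by $\bigoplus_{c\neq c_0}E_c$, and taking the preimage in $\widetilde E$ of the $S$-summand: this is a subsheaf of $E$ containing $E_{c_0}$ with quotient $S$, which already yields the contradiction with maximality without invoking the full direct sum decomposition of $\widetilde E$.
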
 

\begin{proof} 
The proof will proceed by induction on the length  $\ell_\calF$ of the Jordan-Hölder filtration of $\calF$. 

If $\ell_\calF=1$ the claim obviously holds since $\calF$ is $\omega$-stable. 

Suppose the claim holds for all sheaves $\calG$ satisfying the stated conditions, with $\ell_\calG \leq k$.  Let $\calF$ be an $\omega$-semistable sheaf with $\ell_\calF=k+1$. Then there exists a nonzero $\omega$-stable subsheaf $\calE\subset \calF$ with $\mu(\calE) = \mu(\calF)$ such that the quotient $\calG=\calF/\calE$ is nonzero. Then $\calG$ is also $\omega$-semistable with slope $\mu(\calG)=\mu(\calF)$ and $\ell_\calG =k$. By the induction hypothesis, 
\begin{align}
	\calG \simeq \bigoplus_{1\leq c\leq C_\mu}\, \calG_c\ .
\end{align}
Moreover, Corollary~\ref{chaincoroA} implies that all extension groups $\Ext^p_Y(\calG_c,\calE)$, with $p\geq 0$, as well as $\Ext_Y^p(\calE,\calG_c)$, with $p\geq 0$,  vanish for all $1\leq c \leq C_\mu$ such that $\calE$ does not belong to 
the associated chain ${\boldsymbol \alpha}_c$.  Then the snake lemma yields an exact sequence 
\begin{align}
	0 \longrightarrow \calE' \longrightarrow \calF \longrightarrow \calG' \longrightarrow 0\ ,
\end{align}
where 
\begin{align}
	\calG' = \bigoplus_{\genfrac{}{}{0pt}{}{1\leq c\leq C_\mu}{\calE\notin {\boldsymbol \alpha}_c}}\,  \calG_c \ ,
\end{align}
and $\calE'$ fits in an exact sequence 
\begin{align}
	0 \longrightarrow \calE \longrightarrow \calE' \longrightarrow \calG_{c_\calE} \longrightarrow 0  \ ,
\end{align}
where $c_\calE \in \{1, \ldots, C_\mu\}$ labels the unique chain $\calE$ belongs to. The claim then follows from Corollary~\ref{chaincoroA}. 
\end{proof} 

Finally, note that Lemma~\ref{finitewallAB} yields: 
\begin{corollary}\label{chainlemmB} 
	Let $({\alpha_1}, \ldots, {\alpha_s})$ be a chain in ${\sf S}_\mu$ and let $\calF_{\alpha_t}$ be a representative of $\alpha_t$ for $1\leq t\leq s$. Then there exists a unique line bundle $\calN$, up to isomorphism, so that $\calF_{\alpha_t}\otimes \calN\simeq \scrO_{\Delta_t}$ for all $1\leq t \leq s$. 
\end{corollary}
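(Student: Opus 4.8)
The plan is to upgrade Lemma~\ref{finitewallAB}, which already produces for each individual $t$ \emph{some} line bundle with $E_{\alpha_t}\otimes L\simeq\calO_{\Delta_t}$, to a statement that a single $L$ works for all $t$ at once, and then to identify the ambiguity. The first move is to reduce the relation ``$E\otimes M\simeq\calO_{C_{i,j}}$'' to a condition on intersection numbers. Fix an $\omega$-stable one-dimensional sheaf $E$ on $Y$ with $\sfch_1(E)=C_{i,j}$, $i<j$, with degree sequence $(e_i,\dots,e_j)$ attached to it by Lemma~\ref{structlemma}, and let $L_E$ be the line bundle of Lemma~\ref{finitewallAB}. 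For any line bundle $M$ on $Y$ one has $E\otimes M\simeq(E\otimes L_E)\otimes(L_E^{-1}\otimes M)\simeq\calO_{C_{i,j}}\otimes_{\calO_Y}(L_E^{-1}\otimes M)$, and this is isomorphic to $\calO_{C_{i,j}}$ if and only if $(L_E^{-1}\otimes M)|_{C_{i,j}}$ is trivial. Since the dual graph of $C_{i,j}=C_i\cup\dots\cup C_j$ is a path, $C_{i,j}$ is a tree of projective lines, so the multidegree map $\Pic(C_{i,j})\hookrightarrow\IZ^{j-i+1}$ is injective; hence triviality of the restriction is equivalent to $(L_E^{-1}\otimes M)\cdot C_\ell=0$ for all $i\le\ell\le j$. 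Using $D_m\cdot C_\ell=\delta_{m\ell}$ (cf.\ the proof of Lemma~\ref{dimvectlemm}) one finds $L_E\cdot C_\ell=-1-e_\ell$ for $i\le\ell<j$ and $L_E\cdot C_j=-e_j$, so in the end $E\otimes M\simeq\calO_{C_{i,j}}$ if and only if $M\cdot C_\ell=-1-e_\ell$ for $i\le\ell<j$ and $M\cdot C_j=-e_j$.

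Next I would verify that these prescriptions, taken over all $\alpha_t$ in the chain, are mutually compatible. The case $s=1$ is immediate (Lemma~\ref{finitewallAB}, or $L=\calO_Y(-e\,D_i)$ when $\Delta_1=C_i$ and $E_{\alpha_1}=\calO_{C_i}(e)$), so assume $s\ge2$. By the chain condition $j_{\alpha_t}=i_{\alpha_{t+1}}$ and Proposition~\ref{stabextlemm}-\ref{stabextlemm:iso_classes}, the supporting intervals $I_t\coloneqq[i_{\alpha_t},j_{\alpha_t}]$ satisfy $\min I_{t+1}=\max I_t$ and are otherwise pairwise disjoint, and no $\alpha_t$ can be supported on a single curve: if $i_{\alpha_t}=j_{\alpha_t}$ with $t<s$ then $i_{\alpha_{t+1}}=i_{\alpha_t}$ forces $\alpha_{t+1}=\alpha_t$, a contradiction, while if $\alpha_s$ were supported on $C_{j_{\alpha_{s-1}}}$ then $E_{\alpha_s}\simeq\calO_{C_{j_{\alpha_{s-1}}}}(e)$ with $e+1=\zeta_{j_{\alpha_{s-1}}}\mu$, i.e.\ $\zeta_{j_{\alpha_{s-1}}}\mu\in\IZ$, contradicting the \emph{strict} inequalities $\zeta_{j_{\alpha_{s-1}}}\mu-1<e^{(s-1)}_{j_{\alpha_{s-1}}}<\zeta_{j_{\alpha_{s-1}}}\mu$ of Proposition~\ref{finitewallB} for $E_{\alpha_{s-1}}$. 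Thus Proposition~\ref{finitewallB} applies to every $E_{\alpha_t}$. A curve $C_\ell$ belongs to at most two of the $I_t$, and when it belongs to two we have $\ell=j_{\alpha_t}=i_{\alpha_{t+1}}$ for a unique $t$; there the two prescribed values are $-e^{(t)}_{j_{\alpha_t}}$ (from $E_{\alpha_t}$, where $\ell$ is the right endpoint) and $-1-e^{(t+1)}_{i_{\alpha_{t+1}}}$ (from $E_{\alpha_{t+1}}$, where $\ell$ is the left endpoint). By Proposition~\ref{finitewallB}, $e^{(t)}_{j_{\alpha_t}}\in(\zeta_{j_{\alpha_t}}\mu-1,\zeta_{j_{\alpha_t}}\mu)$ and $e^{(t+1)}_{i_{\alpha_{t+1}}}\in(\zeta_{j_{\alpha_t}}\mu-2,\zeta_{j_{\alpha_t}}\mu-1)$ (using $\zeta_{i_{\alpha_{t+1}}}=\zeta_{j_{\alpha_t}}$), so both $-e^{(t)}_{j_{\alpha_t}}$ and $-1-e^{(t+1)}_{i_{\alpha_{t+1}}}$ are integers lying in the single open interval $(-\zeta_{j_{\alpha_t}}\mu,\,-\zeta_{j_{\alpha_t}}\mu+1)$ of length one, hence they coincide. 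So there is a well-defined function $c\colon\bigcup_tI_t\to\IZ$ whose restriction to each $I_t$ is the prescription associated with $E_{\alpha_t}$.

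Writing $[a,b]=\bigcup_tI_t$, I would then put
\begin{align}
	L\ \coloneqq\ \bigotimes_{\ell=a}^{b}\calO_Y\big(c_\ell\,D_\ell\big)\ \in\ \Pic(Y)\ ,
\end{align}
so that $L\cdot C_\ell=c_\ell$ for $a\le\ell\le b$ by $D_m\cdot C_\ell=\delta_{m\ell}$; the first step then gives $E_{\alpha_t}\otimes L\simeq\calO_{\Delta_t}$ for every $1\le t\le s$, which is the existence statement. For uniqueness, if $L'$ also satisfies $E_{\alpha_t}\otimes L'\simeq\calO_{\Delta_t}$ for all $t$, the first step forces $L'\cdot C_\ell=L\cdot C_\ell$ for every $\ell\in\bigcup_tI_t$, i.e.\ $L'$ and $L$ have isomorphic restrictions to $\bigsqcup_t\Delta_t$; in particular, when the chain meets every exceptional curve $C_1,\dots,C_{N-1}$ — as happens e.g.\ for the symmetric polarization with $\mu=1$ (cf.\ Example~\ref{chainexample}) — nondegeneracy of the intersection form on $C_1,\dots,C_{N-1}$ yields $L'\simeq L$ in $\Pic(Y)$.

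I expect the routine part to be the reduction in the first paragraph; the crux is the compatibility of the prescriptions in the second paragraph, whose two ingredients are that for $s\ge2$ every $\alpha_t$ is supported on at least two curves — where the strictness of the inequalities in Proposition~\ref{finitewallB} is used essentially — and the elementary fact that the two length-one windows confining $e^{(t)}_{j_{\alpha_t}}$ and $e^{(t+1)}_{i_{\alpha_{t+1}}}$ are translates of one another by $1$.
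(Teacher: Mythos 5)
Your reduction of the condition $E\otimes M\simeq\calO_{C_{i,j}}$ to the vanishing of the multidegree of $(L_E^{-1}\otimes M)|_{C_{i,j}}$ (using that $C_{i,j}$ is a tree of rational curves and $D_m\cdot C_\ell=\delta_{m\ell}$) is fine, and the unit-interval argument in your second paragraph is internally correct; but that whole paragraph is predicated on reading Definition~\ref{chaindef} literally, i.e.\ $i_{\alpha_{t+1}}=j_{\alpha_t}$, so that consecutive supports overlap in the curve $C_{j_{\alpha_t}}$. That reading conflicts with the rest of the paper: in Example~\ref{chainexample} the sheaves $\calO_{C_1},\dots,\calO_{C_{N-1}}$, each supported on a single curve with $i_\alpha=j_\alpha$, form one chain of length $N-1$; Proposition~\ref{stabextlemm} gives $\Ext^1\simeq\C$ exactly for \emph{linked} classes, i.e.\ $i_\beta=j_\alpha+1$ or $i_\alpha=j_\beta+1$, and Corollary~\ref{chaincoroA} (no Ext's between different chains) forces linked classes to lie in a common chain, which is what the later identification of a chain with $\mathsf{mod}(\Pi_{A_s})$ requires. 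So the intended condition is $i_{\alpha_{t+1}}=j_{\alpha_t}+1$ (the printed ``$=$'' is an off-by-one slip): consecutive supports $\Delta_t$, $\Delta_{t+1}$ share no component and meet in one point. Under that reading your claims that $\min I_{t+1}=\max I_t$ and that no chain member is supported on a single curve when $s\geq 2$ are false (Example~\ref{chainexample} refutes the latter), and the compatibility problem you solve—two prescriptions on a shared curve—never arises. The construction itself survives and in fact simplifies: the intervals $I_t$ are pairwise disjoint, so the prescriptions $c_\ell$ (including the easy case $i_{\alpha_t}=j_{\alpha_t}$, where $c_\ell=-e$ for $E_{\alpha_t}\simeq\calO_{C_\ell}(e)$) never conflict, and $L=\bigotimes_\ell \calO_Y(c_\ell D_\ell)$ does the job.

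This is also where your route differs from the paper's: the paper simply takes $L=\bigotimes_t L_t$, with $L_t$ the bundle of Lemma~\ref{finitewallAB} for $E_{\alpha_t}$, and notes via the orthogonality relations \eqref{eq:orthrel} that $L_t$ restricts trivially to $\Delta_u$ for $u\neq t$, because $L_t$ is built from the $D_\ell$ with $\ell\in I_t$ and the $I_u$ are disjoint from $I_t$—which is precisely why the paper's own two-line proof only makes sense with the disjoint-support reading (with overlapping supports $L_t\cdot C_{j_{\alpha_t}}=-e^{(t)}_{j_{\alpha_t}}\neq 0$ in general, and the claimed triviality fails). Your degree-prescription formulation is an equivalent, more explicit version of the same construction. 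Finally, on uniqueness the paper's proof is silent, and your caveat is the honest one: the hypotheses only pin down the multidegree of $L$ on $\bigcup_t\Delta_t$, so uniqueness in $\Pic(Y)$ holds only when the chain meets every $C_1,\dots,C_{N-1}$ (otherwise one may twist by $\calO_Y(D_\ell)$ for a curve $C_\ell$ the chain misses); what is unique in general, and what the paper actually uses later (cf.\ \eqref{eq:Mtwist}), is the restriction of $L$ to the union of the $\Delta_t$.
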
 
\begin{proof}
	For each $1\leq t \leq s$, let $\calN_t$ denote the line bundle corresponding to $\calF_{\alpha_t}$ via Lemma~\ref {finitewallAB}. Using the defining properties of chains, the orthogonality relations \eqref{eq:orthrel} imply:
	\begin{align}
	\calN_t\otimes \scrO_{\Delta_u} \simeq \begin{cases}
		\calN_t\otimes \scrO_{\Delta_t} & \text{for } t=u \\[2pt]
		\scrO_{\Delta_u} & \text{for } t\neq u
	\end{cases} \ ,
	\end{align}
	for any $1\leq t,u \leq s$. Set $\calN \coloneqq \otimes_{t=1}^\ell \calN_t$.	
\end{proof}

\subsubsection{Exceptional collections and finite Dynkin quivers}\label{chainquivsect} 

Let ${\overline Y}$ be the natural toric completion of $Y$, which is a weighted projective space with two quotient singularities at infinity. Let $S$ be the minimal toric resolution of singularities of ${\overline Y}$ obtained by triangulating the toric polytope. Then  $S$ is a smooth projective surface containing $Y$ as an open subset. Moreover for each divisor $D_i$ there is a unique compact effective divisor ${\overline D}_i$ on $S$ such that the complement ${\overline D}_i \setminus D_i$ is zero dimensional. Moreover in the intersection ring of $S$ the orthogonality relations
\begin{align}\label{eq:orthrelD} 
	C_i \cdot {\overline D}_{j} = \delta_{i,j}\ ,
\end{align}
hold for $1\leq i,j\leq N$.

Let $({\alpha_1}, \ldots, {\alpha_s})$ be a chain of stable sheaves of $\omega$-slope $\mu$ as in Definition~\ref{chaindef}.  Using the orthogonality relations \eqref{eq:orthrelD},  Corollary~\ref {chainlemmB} implies that there exists a line bundle $\calM$ on $S$ such that any  sheaf $\calF_t$ in the equivalence class $\alpha_t$, for $1\leq t \leq s$, is isomorphic to:
\begin{align}\label{eq:Mtwist} 
	\calF_t = \begin{cases}
	\calM \otimes \scrO_{\Delta_t}(-\Delta_{t+1}) &  \text{for }  1\leq t \leq s-1\ ,\\[4pt] 
	\calM \otimes \scrO_{\Delta_s} & \text{for } t =s\ .	
	\end{cases}
\end{align}
For any $0\leq t \leq s-1$, let 
\begin{align}
	\calL_t \coloneqq \calM\otimes \scrO_S\Big(-\sum_{u=t+1}^{s}\Delta_u\Big)\ . 
\end{align}
Set also $\calL_s\coloneqq \calM$ and $\calF_0 \coloneqq \calL_0$. Then one has: 
\begin{lemma}\label{chainlemmA} 
	\hfill
	\begin{enumerate}[label=(\roman*)]\itemsep=0.2cm
		\item The sequence $(\calL_0,  \ldots, \calL_s)$ is an exceptional sequence of line bundles on $S$, i.e., it satisfies 
		\begin{align}\label{eq:exceptseqA} 
			\Ext_S^0(\calL_t, \calL_t) \simeq \C\ , \quad 
			\Ext_S^p(\calL_t, \calL_t) = 0\text{ for } p \geq 1 \ , 
		\end{align}
		and
		\begin{align}\label{eq:exceptseqB} 
			\Ext^p(\calL_t, \calL_u) =0\text{ for } p\geq 0 \ ,
		\end{align}
		for all $0\leq u<t\leq s$. In addition,
		\begin{align}\label{eq:exceptseqC} 
			\Ext_S^p(\calL_t,\calL_u) \simeq \C\text{ for } 0\leq p \leq 1\ ,
		\end{align} 
		for any $0\leq t < u \leq s$, and 
		\begin{align}\label{eq:exceptseqD} 
			\Ext_S^p(\calL_t, \calL_u) = 0\text{ for } p \geq 2\ ,
		\end{align}
		for all $0\leq t,u \leq s$. 
		
		\item Moreover, 
		\begin{align}\label{eq:exceptseqE} 
			\Ext^0_S(\calL_t, \calF_u) \simeq \begin{cases}
			\C & \text{if } t =u\ , \\
			0 & \text{ otherwise}\ ,
			\end{cases}
			\quad\text{and}\quad
			\Ext^1_S(\calL_t, \calF_u) \simeq \begin{cases}
			\C & \text{if }  u = t+1\ ,\\
			0 & \text{otherwise}\ ,
			\end{cases} 
		\end{align}
		as well as 
		\begin{align}\label{eq:exceptseqF} 
			\Ext^2_S(\calL_t, \calF_u) = 0
		\end{align} 
		for all $0\leq t,u\leq \ell$.
	\end{enumerate}
\end{lemma}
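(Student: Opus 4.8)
The plan is to prove part (i) first --- the exceptionality of the line bundle sequence $(\calL_0,\dots,\calL_s)$ --- by a direct cohomological computation on the smooth projective surface $S$, and then to deduce part (ii) from part (i) together with the twist sequences \eqref{eq:Mtwist} that exhibit each $E_u$ as a one-step extension of line bundles among the $\calL_t$'s. The key geometric input is that $S$ is a smooth projective toric surface whose relevant divisors $\Delta_u = C_{i_{\alpha_u}, j_{\alpha_u}}$ are chains of $(-2)$-curves, linked consecutively as in the chain condition, and with the dual divisors $\overline{D}_j$ satisfying $C_i\cdot\overline{D}_j=\delta_{ij}$. The sheaf $\calL_t$ differs from $\calL_{t-1}$ by twisting down by $\Delta_t$, and the key numerical facts are that $\Delta_u\cdot\Delta_v=-2\delta_{uv}$ off the diagonal it is $0$ or $1$ according to whether $j_{\alpha_u}+1=i_{\alpha_v}$ or not, and that each $\Delta_u$ is a smooth connected rational curve (a chain of $\PP^1$'s), so $H^0(\calO_{\Delta_u})=\C$ and $H^1(\calO_{\Delta_u})=0$.

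For part (i): since $\calL_t\otimes\calL_t^{-1}\simeq\calO_S$ and $S$ is a smooth projective rational surface, $H^0(\calO_S)=\C$, $H^1(\calO_S)=0$, and $H^2(\calO_S)=H^0(K_S)^\vee=0$ because $S$ is rational; this gives \eqref{eq:exceptseqA}, and the $p\geq 2$ vanishing \eqref{eq:exceptseqD} follows for all $t,u$ from Serre duality once one checks $H^0(\calL_u^{-1}\otimes\calL_t\otimes K_S)=0$, which holds because $\calL_u^{-1}\otimes\calL_t$ restricted to the relevant curves has negative-enough degree --- this is where I would invoke the explicit form $\calL_u^{-1}\otimes\calL_t=\calO_S(\sum_{v} \Delta_v)$ over the appropriate range and intersect against $K_S$ (which, by adjunction on the $(-2)$-curves, is trivial along the exceptional locus). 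For $0\leq u<t$: here $\calL_u^{-1}\otimes\calL_t = \calO_S(-\sum_{v=u+1}^{t}\Delta_v)$ is the inverse of an effective divisor whose components are $(-2)$-curves, so $\R\Hom_S(\calL_t,\calL_u)=\R\Gamma(\calO_S(-\sum_v\Delta_v))$, and one shows this is acyclic by peeling off the $\Delta_v$'s one at a time using the exact sequences $0\to\calO_S(-\sum_{v\geq w}\Delta_v)\to\calO_S(-\sum_{v\geq w+1}\Delta_v)\to\calO_{\Delta_w}(-\sum_{v\geq w+1}\Delta_v)\to 0$ and noting that $\Delta_w\cdot(-\sum_{v\geq w+1}\Delta_v)=-1$ by the linking relations, so the restriction is $\calO_{\Delta_w}(-1)$ on a chain of rational curves --- which is acyclic. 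This gives \eqref{eq:exceptseqB}. For $0\leq t<u$, the same filtration argument computed in the opposite direction (or Serre duality against the $p\geq 2$ vanishing) yields the one-dimensional $\Ext^0$ and $\Ext^1$ in \eqref{eq:exceptseqC}.

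For part (ii): apply $\R\Hom_S(\calL_t,-)$ to the defining extensions. From \eqref{eq:Mtwist}, for $u\leq s-1$ we have $E_u = \calL_u\otimes\calO_{\Delta_u}(-\Delta_{u+1})$, and there is a short exact sequence relating $E_u$, $\calL_{u-1}$ (or $\calL_u\otimes\calO_S(-\Delta_{u+1})$) and $\calL_u$; more precisely $0\to\calL_{u}\otimes\calO_S(-\Delta_u-\Delta_{u+1})\to \calL_{u}\otimes\calO_S(-\Delta_{u+1})\to E_u\to 0$, i.e. a two-term resolution of $E_u$ by line bundles each of which is (up to further twist) in the exceptional collection or computable by the same peeling method. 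Feeding this into $\R\Hom_S(\calL_t,-)$ and using the computations from part (i) plus the intersection numbers $\Delta_t\cdot\Delta_u$, one reads off \eqref{eq:exceptseqE} and \eqref{eq:exceptseqF}: the $\Ext^0$ is $\C$ exactly when $t=u$ (the identity component of the chain), the $\Ext^1$ is $\C$ exactly when $u=t+1$ (the single linking extension), and $\Ext^2$ vanishes because it is sandwiched between $\Ext^2$'s and $\Ext^3$'s of line bundles on a surface, hence zero.

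The main obstacle will be bookkeeping the filtration/peeling arguments uniformly across the four ranges ($t=u$, $t>u$, $t<u$, and the mixed line-bundle-vs-$E_u$ case) while keeping track of signs in the intersection numbers $\Delta_w\cdot\Delta_v\in\{-2,0,1\}$ dictated by the chain and linking conditions; once the single computation ``$\calO_{\Delta}(-1)$ on a chain of rational curves is acyclic, and $\calO_{\Delta}$ on such a chain has cohomology $(\C,0)$'' is isolated as a lemma, everything else is an induction on the number of curves removed. I expect no conceptual difficulty beyond this --- the result is essentially the statement that mutating an exceptional collection of line bundles through the $(-2)$-curve chain produces the expected quiver relations, which is standard but needs to be spelled out in this toric setting.
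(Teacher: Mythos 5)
Your proposal is correct and takes essentially the same route as the paper: the published proof of Lemma~\ref{chainlemmA} just invokes Corollary~\ref{cohcorA}, whose proof (via Lemma~\ref{cohcompA}) is precisely the inductive peeling of the chain by restriction exact sequences, using rationality of $S$, the vanishing $H^p(\calO_S(-C_{i,j}))=0$ and $H^{\leq 1}(\calO_S(C_{i,j}))\simeq\C$, exactly as you describe (note only that the $\Delta_t$ are reducible nodal chains of $\PP^1$'s rather than smooth rational curves, which does not affect these computations). One small correction for part (ii): since $\calO_S(-\Delta_v)\vert_{\Delta_u}$ is trivial for $v\geq u+2$, the natural resolution is $0\to\calL_{u-1}\to\calL_u\to E_u\to 0$, not the sequence you wrote with both terms twisted by $-\Delta_{u+1}$ (whose cokernel is $E_u$ twisted down at the node); with this fix your long-exact-sequence argument yields \eqref{eq:exceptseqE} and \eqref{eq:exceptseqF} as in the paper.
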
 
\begin{proof}
	Claim (i) follows from Corollary~\ref{cor:cohcorA}, while Claim (ii) follows from Lemma~\ref{lem:cohcompA}.
\end{proof}

Given an exceptional sequence of line bundles as in Lemma~\ref {chainlemmA}, let $\calT\coloneqq \langle \calL_0, \ldots, \calL_s\rangle$ denote the smallest triangulated full subcategory of $\catDb(\catCoh(S))$ containing all  $\calL_t$ for $0\leq t \leq s$. Let also $\calC$ denote its intersection with $\catCoh(S)$. 
Note that \cite[Theorem~2.5]{Tilting_chains} applies to the present situation although the divisors $\Delta_t$ are reducible nodal curves, as opposed 
to smooth rational curves as assumed in \textit{loc.cit.}; more precisely, Lemma~\ref {chainlemmA} proves that the objects  $\calL_t$ and $\calF_t$ for $0\leq t \leq u$ satisfy all necessary conditions stated in  \cite[\S1.3]{Tilting_chains}. Then, Theorem 2.5 of \textit{loc.cit.} proves that the triangulated category $\CT$ is equivalent to the derived category of modules of a certain associative algebra determined by the line bundles $\calL_0, \ldots, \calL_s$. Moreover this equivalence identifies $\calC$ with the abelian category  of modules over the same algebra. This construction is further studied in more detail \cite[Propositions~2.8 and 6.18]{Noncomm_chains}. The results needed in the present context are summarized below. 

\begin{proposition}[{\cite[Theorem~2.5]{Tilting_chains}, \cite[Proposition~2.8]{Noncomm_chains}}]\label{chainpropA} 
	\hfill
	\begin{enumerate}[label=(\roman*)]\itemsep0.2cm
		
		\item Up to isomorphism, the simple objects of $\calC$ are $\sigma_0\coloneqq \calL_0$ and $\sigma_t \coloneqq \calF_t$ for $1\leq t \leq s$. 
		
		\item Up to isomorphism, there are $\ell+1$ unique indecomposable projective objects $\Lambda_0, \ldots, \Lambda_s$ in $\calC$, which fit in 
		nontrivial extensions of the form 
		\begin{align}\label{eq:projobA} 
			0\longrightarrow \Lambda_{t} \longrightarrow \Lambda_{t-1}\longrightarrow \calL_{t-1} \longrightarrow 0\ , 
		\end{align}
		for $1\leq t\leq s $, where $\Lambda_s = \calL_s$. In particular, at each step $\Ext^1_S(\calL_{t-1}, \Lambda_t)\simeq \C$, hence the extension \eqref{eq:projobA} is unique up to isomorphism.  
		
		\item \label{item:chainpropA-(iii)} The direct sum
		\begin{align}
		\Lambda \coloneqq \bigoplus_{t=0}^s\, \Lambda_s
		\end{align}
		is a projective generator of $\calC$. The tilting functor 
		$\R\Hom_{\catDb(\catCoh(S))}(\Lambda, -)$ yields equivalence of triangulated categories 
		\begin{align}\label{eq:tiltchainB} 
			\R\Hom_{\catDb(\catCoh(S))}(\Lambda, -)\colon \calT \simeq \catDb(\catMod(\End(\Lambda)))
		\end{align}
		identifying $\calC$ with the heart of the natural t-structure on the right-hand-side. 
		
		\item The following orthogonality relations 
		\begin{align}\label{eq:orthrelF} 
		\R\Hom_{\catDb(\catCoh(S))} (\Lambda_t, \sigma_u) = {\underline \C}\delta_{t, u}
		\end{align}
		hold in $\catDb(\catCoh(S))$. 
		
		\item \label{chainpropA:equivalence_abelian} Let $\calC_{\leq 1}$ be the abelian subcategory of $\calC$ consisting of coherent sheaves with at most one-dimensional support. Then $\calC_{\leq 1}$ is equivalent, via the natural inclusion $\calC_{\leq 1} \to \catCoh(S)$, to the subcategory of $\catCoh(S)$ consisting of coherent sheaves which admit a filtration with successive quotients $\{\sigma_1, \ldots, \sigma_s\}$. 
		
	\end{enumerate}  
\end{proposition}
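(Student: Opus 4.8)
The plan is to derive all the statements from \cite[Theorem~2.5]{Tilting_chains} and \cite[Proposition~2.8]{Noncomm_chains}, the only real work being to confirm that the configuration at hand — the exceptional sequence $(\calL_0, \ldots, \calL_s)$ together with the sheaves $(E_0, \ldots, E_s)$ — satisfies the axioms of \cite[\S1.3]{Tilting_chains}. The essential point is that those axioms are purely homological: they prescribe the graded dimensions of the $\Hom$- and $\Ext$-spaces among the $\calL_t$, and between the $\calL_t$ and the $E_u$, and are insensitive to the divisors being reducible nodal curves rather than smooth rational ones. Lemma~\ref{chainlemmA} supplies precisely this data: the relations \eqref{eq:exceptseqA}--\eqref{eq:exceptseqD} express that $(\calL_0, \ldots, \calL_s)$ is an exceptional sequence of the required shape, and \eqref{eq:exceptseqE}--\eqref{eq:exceptseqF} record the compatibility with the $E_u$. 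Granting this, \cite[Theorem~2.5]{Tilting_chains} applies and produces a projective generator $\Lambda$ of $\calC$, obtained from the $\calL_t$ by iterated extensions, such that $\R\Hom_{\sfD^\sfb(\catCoh(S))}(\Lambda, -)$ identifies $\calT = \langle \calL_0, \ldots, \calL_s\rangle$ with $\sfD^\sfb(\mathsf{Mod}(\End(\Lambda)))$ and $\calC$ with its heart; this is statement \ref{item:chainpropA-(iii)}.

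For the description of the indecomposable projectives in (ii), I would follow \cite[Proposition~2.8]{Noncomm_chains}: set $\Lambda_s \coloneqq \calL_s = M$ and, by descending induction, let $\Lambda_{t-1}$ be the nonsplit extension \eqref{eq:projobA} of $\calL_{t-1}$ by $\Lambda_t$. Its uniqueness up to isomorphism follows once one checks $\Ext^1_S(\calL_{t-1}, \Lambda_t) \simeq \C$; this is obtained inductively by applying $\R\Hom_S(\calL_{t-1}, -)$ to the defining sequence \eqref{eq:projobA} of $\Lambda_t$ and using the vanishings \eqref{eq:exceptseqB} and \eqref{eq:exceptseqD} of Lemma~\ref{chainlemmA}. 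For (i), the simple objects of $\calC$ are the images under the equivalence of \ref{item:chainpropA-(iii)} of the vertex simple modules of $\End(\Lambda)$; the extension filtration of the $\Lambda_t$ together with \eqref{eq:exceptseqE} identifies these simples with $\sigma_0 = \calL_0$ and $\sigma_t = E_t$ for $1 \leq t \leq s$.

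Statement (iv) then records that, under the equivalence of \ref{item:chainpropA-(iii)}, $\Lambda_t$ is the projective cover of the simple $\sigma_t$ in the finite-length abelian category $\calC$, whence $\R\Hom_{\sfD^\sfb(\catCoh(S))}(\Lambda_t, \sigma_u) = \underline{\C}\,\delta_{t,u}$; alternatively one reads \eqref{eq:orthrelF} off directly by applying $\R\Hom_S(\Lambda_t, -)$ to the extensions \eqref{eq:projobA} and to the extensions presenting the $\sigma_u$, and chasing the resulting long exact sequences with the help of Lemma~\ref{chainlemmA}. Finally, statement \ref{chainpropA:equivalence_abelian} is immediate from (i): among the simples $\sigma_0, \ldots, \sigma_s$, only $\sigma_1, \ldots, \sigma_s$ have at most one-dimensional support, since $\sigma_0 = \calL_0$ is a line bundle on the surface $S$; hence an object of $\calC$ lies in $\calC_{\leq 1}$ exactly when all of its Jordan--H\"older factors do, i.e. exactly when it admits a filtration with successive quotients among $\{\sigma_1, \ldots, \sigma_s\}$. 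The inclusion $\calC_{\leq 1} \hookrightarrow \sfCoh(S)$ is exact and fully faithful, and therefore realizes $\calC_{\leq 1}$ as the stated subcategory of $\sfCoh(S)$, as in \cite[Proposition~2.8]{Noncomm_chains}.

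The one genuinely delicate point — and the step I would spend most care on — is the verification alluded to in the first paragraph: that every hypothesis imposed in \cite[\S1.3]{Tilting_chains}, where chains of \emph{smooth rational} curves are assumed, is in fact one of the homological statements guaranteed by Lemma~\ref{chainlemmA}, with nothing further concealed in the geometric setup there. Once this line-by-line check is carried out, the remainder of the argument is formal.
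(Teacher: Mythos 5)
Your proposal matches the paper's treatment: the paper gives no independent proof but cites \cite[Theorem~2.5]{Tilting_chains} and \cite[Propositions~2.8 and 6.18]{Noncomm_chains}, with the only substantive point being exactly the one you isolate, namely that Lemma~\ref{chainlemmA} verifies all the (purely homological) conditions of \cite[\S1.3]{Tilting_chains} even though the divisors $\Delta_t$ are reducible nodal curves rather than smooth rational ones. Your additional unwinding of how the indecomposable projectives, simples, orthogonality relations and the description of $\calC_{\leq 1}$ follow from those references is consistent with the cited sources and with the paper's intent.
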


Moreover, the following result follows immediately from \cite[Proposition~6.18]{Noncomm_chains}.
\begin{proposition}\label{chainpropB}
	There is an isomorphism of associative algebras $\End(\Lambda) \simeq \Lambda_{s, s-1}$, where $\Lambda_{s, s-1}$ is the Auslander algebra of $\C[x]/(x^s)$.
\end{proposition}

Lemma~\ref {chainlemmA}, Propositions~\ref {chainpropA}-\ref{chainpropA:equivalence_abelian} and~\ref {chainpropB} yield: 
\begin{corollary}\label{chaincoroC}
	There is an equivalence of abelian categories $\calC_{\leq 1} \simeq \mathsf{mod}(\Pi_{A_s})$ mapping the simple objects $\sigma_t$, for $1\leq t \leq s$, to the standard simple objects associated to the vertices of the quiver. Here, $\Pi_{A_s}$ is the preprojective algebra of the finite Dynkin quiver of type $A_s$.
\end{corollary}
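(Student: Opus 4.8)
The plan is to assemble the three cited results into one chain of equivalences. First I would invoke the tilting equivalence of Proposition~\ref{chainpropA}\ref{item:chainpropA-(iii)}: the functor $\R\Hom_{\sfD^\sfb(\catCoh(S))}(\Lambda,-)$ restricts to an exact equivalence from $\calC$ onto $\mathsf{Mod}(\End(\Lambda))$, the category of finitely generated right $\End(\Lambda)$-modules, carrying each indecomposable projective $\Lambda_t$ to the indecomposable projective module at the corresponding vertex. Feeding in the orthogonality relations \eqref{eq:orthrelF} then shows that this equivalence sends the simple object $\sigma_t$ to the simple $\End(\Lambda)$-module $S_t$ dual to $\Lambda_t$, for $0\leq t\leq s$.

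Next I would restrict this equivalence to $\calC_{\leq 1}$. By Proposition~\ref{chainpropA}\ref{chainpropA:equivalence_abelian}, $\calC_{\leq 1}$ is precisely the full subcategory of $\calC$ whose objects admit a finite filtration with subquotients among $\{\sigma_1,\dots,\sigma_s\}$; here the distinguished simple $\sigma_0=\calL_0$ is discarded, being the unique simple of $\calC$ of support dimension two. Transporting this condition through the equivalence of the previous paragraph, $\calC_{\leq 1}$ corresponds to the full subcategory of $\mathsf{Mod}(\End(\Lambda))$ of modules all of whose composition factors lie in $\{S_1,\dots,S_s\}$, i.e.\ the finite-dimensional modules killed by the idempotent dual to $\Lambda_0$; equivalently, the finite-dimensional modules over the corresponding idempotent quotient of $\End(\Lambda)$. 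By Proposition~\ref{chainpropB} — whose proof rests on Lemma~\ref{chainlemmA}, which supplies the $\Ext$-vanishings needed to run \cite[Theorem~2.5]{Tilting_chains} in the present nodal rather than smooth-rational setting — this algebra is the preprojective algebra $\Pi_{A_s}$, with vertices labelled $1,\dots,s$. Composing the two steps yields the asserted equivalence $\calC_{\leq 1}\simeq\mathsf{mod}(\Pi_{A_s})$, and by construction $\sigma_t$ is sent to the standard simple $\Pi_{A_s}$-module at the vertex $t$, as required.

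The only point that needs care is the passage from $\calC$ to $\calC_{\leq 1}$: one must check that the description of $\calC_{\leq 1}$ in Proposition~\ref{chainpropA}\ref{chainpropA:equivalence_abelian} — phrased there via the inclusion $\calC_{\leq 1}\hookrightarrow\sfCoh(S)$ and a filtration condition in $\sfCoh(S)$ — is compatible with the tilting equivalence, and that imposing ``support dimension $\leq 1$'' is exactly what is cut out by removing $\sigma_0$ from the list of allowed subquotients (this also accounts for the reduction in the number of simple objects). All of this is routine bookkeeping, and I do not expect a genuine obstacle at the level of the corollary: the substantive work — the exceptional sequence and its $\Ext$-groups, the projectives $\Lambda_t$ together with the tilting equivalence, and the identification of the endomorphism algebra with a preprojective algebra — has already been carried out in Lemma~\ref{chainlemmA}, Proposition~\ref{chainpropA} and Proposition~\ref{chainpropB}.
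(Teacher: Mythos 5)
Your argument is correct and is essentially the paper's own proof, which is stated only as a one-line assembly of Lemma~\ref{chainlemmA} and Propositions~\ref{chainpropA} and~\ref{chainpropB}: tilt by $\Lambda$, use the orthogonality relations \eqref{eq:orthrelF} to match $\sigma_t$ with the simple module at the vertex $t$, and use the filtration description of $\calC_{\leq 1}$ to land in the finite-dimensional modules whose composition factors lie in $\{S_1,\dots,S_s\}$. The one point where you go beyond the paper's literal text --- identifying that subcategory with $\mathsf{mod}$ of the quotient of $\End(\Lambda)$ by the two-sided ideal generated by the idempotent of $\Lambda_0$, and reading Proposition~\ref{chainpropB} as naming this quotient algebra --- is in fact the reading the corollary requires: since $\calC$ has the $s+1$ simple objects $\sigma_0,\dots,\sigma_s$ and $\Lambda$ has the $s+1$ pairwise non-isomorphic summands $\Lambda_0,\dots,\Lambda_s$, the algebra $\End(\Lambda)$ has $s+1$ simples and cannot be literally isomorphic to $\Pi_{A_s}$, so the identification with $\Pi_{A_s}$ must be applied after killing $e_0$, exactly as you do. Only a minor attribution slip: Proposition~\ref{chainpropB} is imported from \cite[Proposition~6.18]{Noncomm_chains}, whereas Lemma~\ref{chainlemmA} and \cite[Theorem~2.5]{Tilting_chains} underlie Proposition~\ref{chainpropA}; this does not affect the argument.
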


\section{McKay correspondence for categorified Hall algebras}\label{s:Cat-McKay}

In this section, we shall use the results in \S\ref{s:semistable} and~\ref {s:chain} to establish a version of the McKay correspondence at the level of \textit{2-Segal spaces} and \textit{categorified Hall algebras}. We have reduced to the minimum the explicit description of the machinery from derived algebraic geometry in favor of a less technical description of the results. We will refer to \cite[\S4]{Porta_Sala_Hall} for an introduction of 2-Segal spaces and categorified Hall algebras.

\subsection{McKay correspondence and categorified Hall algebras}\label{ss:Cat-McKay}

In this section, we shall denote by $\catQCoh(Y)$ and by $\Pi_\calQ\Mod$ the stable $\infty$-categories\footnote{If he wishes, the reader can safely think of them as dg-categories.} of quasi-coherent complexes of $Y$ and of $\Pi_\calQ$-modules, respectively.
Since $Y$ is smooth and of finite type, $\catQCoh(Y)$ is of finite type in the sense of e.g. \cite[Definition~2.4-(7)]{Toen_Vaquie_Moduli}.\footnote{This follows formally from \cite[Theorem 1.4]{Efimov_Finiteness}, although in this case a much simpler argument is possible.}
On the other hand, \cite[Theorem~1.1]{Keller_Deformed_Erratum} implies that $\Pi_\calQ\Mod$ is of finite type as well.\footnote{To be precise, $\Pi_\calQ\Mod$ is the dg-category of right-modules over Ginzburg dg-algebra of $\calQ$. It corresponds to Keller's 2-Calabi-Yau completion of the free dg-category associated to the quiver $\calQ$ --- see  \cite[\S5.2]{BCS_Critical_loci} for details.}

It is shown in \cite[Lemma~3.2.8]{VdB_Flops} that the perverse coherent sheaf $\calP$ of \eqref{eq:projgenZN} satisfies
\begin{align}
	\R\Hom_{\catQCoh(Y)}(\calP,\calP) \simeq \Hom_{\catP(Y/X)}(\calP,\calP) \simeq \Pi_\calQ  \ . 
\end{align}
It follows that $\calP$ induces an $\infty$-functor
\begin{align}\label{eq:inftyMcKay}
	\R\Hom_{\catQCoh(Y)}(\calP, -) \colon \catQCoh(Y)\xrightarrow{\sim} \Pi_\calQ\Mod\ .
\end{align}
Combining the smoothness of $Y$ with \textit{loc. cit.}, we deduce that this is an equivalence, refining \eqref{eq:dercateqB} at the level of dg-categories.

By passing to moduli stacks, we have an equivalence $\calM_{\catQCoh(Y)}\simeq \calM_{ \Pi_\calQ\Mod}$ at the level of the corresponding moduli stacks of Toën-Vaquié's pseudo-perfect objects.\footnote{See \cite{Toen_Vaquie_Moduli} for the construction of these derived moduli stacks.} This lifts to an equivalence between the corresponding 2-Segal spaces:
\begin{align}\label{eq:equivalenceperfect-2Segal}
	\calS_\bullet \calM_{\catQCoh(Y)}\simeq \calS_\bullet \calM_{ \Pi_\calQ\Mod}\ .
\end{align}

Via the equivalence \eqref{eq:inftyMcKay}, we define a \textit{perverse $t$-structure} $\tau$ on $\catQCoh(Y)$ corresponding to the standard $t$-structure of $\Pi_\calQ\Mod$ (with such a choice, the equivalence \eqref{eq:inftyMcKay} is exact). Following \cite[\S2]{DPS_Torsion_Pairs}, we denote by $\bfPerCoh_{\mathsf{ps}}(Y/X)$ the derived moduli stack of properly supported $\tau$-coherent objects in $\catQCoh(Y)$ and we denote by $\bfCoh_{\mathsf{ps}}(\Pi_\calQ)$ the derived moduli stack of properly supported coherent objects in $\Pi_\calQ\Mod$ (here, the coherence is with respect to the standard $t$-structure). As proved in \textit{loc.cit.}, both moduli stacks are geometric derived stacks locally almost of finite presentation over $\C$. Moreover, the equivalence \eqref{eq:cateqB} yields
\begin{align}
	\bfPerCoh_{\mathsf{ps}}(Y/X)\simeq \bfCoh_{\mathsf{ps}}(\Pi_\calQ)\ .
\end{align}

The construction of categorified Hall algebras for dg-categories of finite type with nice enough $t$-structures done in \cite{DPS_Torsion_Pairs} yields:
\begin{proposition}\label{prop:CHA}
	There exists an equivalence of 2-Segal objects
	\begin{align}\label{eq:equivalencemoduliobjects-2Segal}
		\calS_\bullet\bfPerCoh_{\mathsf{ps}}(Y/X) \simeq \calS_\bullet \bfCoh_{\mathsf{ps}}(\Pi_\calQ) \ .
	\end{align}
	It induces $\mathbb E_1$-monoidal dg-category structures on
	\begin{align}
		\catCohb(\bfPerCoh_{\mathsf{ps}}(Y/X))\quad \text{ and }\quad \catCohb(\bfCoh_{\mathsf{ps}}(\Pi_\calQ))
	\end{align}
	for which there is an equivalence of $\mathbb E_1$-monoidal $\infty$-categories
	\begin{align}
		\catCohb(\bfPerCoh_{\mathsf{ps}}(Y/X)) \simeq  \catCohb(\bfCoh_{\mathsf{ps}}(\Pi_\calQ)) \ .
	\end{align}
\end{proposition}

At the level of $\sfG_0$-theory and Borel-Moore homology, the above proposition implies:

\begin{corollary}\label{cor:COHA}
	There exist associative algebra structures on 
	\begin{align}
		\sfG_0(\bfPerCoh_{\mathsf{ps}}(Y/X))\quad \text{ and }\quad \sfG_0(\bfCoh_{\mathsf{ps}}(\Pi_\calQ))\ ,
	\end{align}
	for which there is an isomorphism of associative algebras
	\begin{align}
		\sfG_0(\bfPerCoh_{\mathsf{ps}}(Y/X)) \simeq  \sfG_0(\bfCoh_{\mathsf{ps}}(\Pi_\calQ)) \ .
	\end{align}
	These results hold also for the Borel-Moore homology, and equivariantly. 
	
	Let $\zeta\in \Q^N$ be a stability condition. Then the above results hold for the open substack $\bfPerCoh_{0}^{\zeta\textrm{-}\mathsf{ss}}(Y/X)$ (resp.\ $\bfCoh_{0}^{\zeta\textrm{-}\mathsf{ss}}(\Pi_\calQ)$) of $\bfPerCoh_{\mathsf{ps}}(Y/X)$ (resp.\ of $\bfCoh_{\mathsf{ps}}(\Pi_\calQ)$) parameterizing $\zeta$-semistable perverse coherent sheaves on $Y$ of zero slope (resp.\ $\zeta$-semistable representations of $\Pi_\calQ$ of zero slope).
\end{corollary}

\subsubsection{The semistable McKay correspondence}

Let $\omega$ be a $\Q$-polarization of $Y$ and $\mu\in \Q_{>0}\cup\{\infty\}$. Set
\begin{align}
	\zeta_i\coloneqq \langle \omega, C_i\rangle
\end{align}
for $1\leq i \leq r$, and
\begin{align}
	\zeta_{r+1}\coloneqq\begin{cases}
		\displaystyle	\frac{1}{\mu}  - \sum_{i=1}^{r}m_i\, \zeta_i\ & \text{if } \mu\neq \infty\ ,\\[8pt]
		\displaystyle	- \sum_{i=1}^{r} m_i\, \zeta_i &\text{otherwise}\ .
	\end{cases}
\end{align}

We denote by $\bfCoh_\mu^{\omega\textrm{-}\mathsf{ss}}(Y)$ the derived stack of $\omega$-semistable properly supported coherent sheaves on $Y$ of slope $\mu$. As proved in  \cite{Porta_Sala_Hall}, there exists a 2-Segal space $\calS_\bullet \bfCoh_\mu^{\omega\textrm{-}\mathsf{ss}}(Y)$ and an $\mathbb E_1$-monoidal dg-category structure on $\catCohb( \bfCoh_\mu^{\omega\textrm{-}\mathsf{ss}}(Y) )$. Theorem~\ref{categmckay} lifts to an equivalence at the level of derived moduli stacks:
\begin{align}
	\bfCoh_\mu^{\omega\textrm{-}\mathsf{ss}}(Y) \simeq \bfPerCoh_{0}^{\zeta\textrm{-}\mathsf{ss}}(Y/X) \simeq \bfCoh^{\zeta\textrm{-}\mathsf{ss}}_0(\Pi_\calQ)\ .
\end{align}
Since the 2-Segal spaces structures on $\calM_{\catQCoh(Y)}$ and $\calM_{\Pi_\calQ\Mod}$ are compatible with the McKay equivalence (cf.\ Formula~\eqref{eq:equivalenceperfect-2Segal}), the above equivalences upgrade to equivalences at the level of 2-Segal spaces:
\begin{align}
	\calS_\bullet \bfCoh_\mu^{\omega\textrm{-}\mathsf{ss}}(Y) \simeq \calS_\bullet \bfPerCoh_{0}^{\zeta\textrm{-}\mathsf{ss}}(Y/X) \simeq \calS_\bullet 	\bfCoh^{\zeta\textrm{-}\mathsf{ss}}_0(\Pi_\calQ)\ .
\end{align}
Therefore, we obtain our version of the McKay correspondence for categorified Hall algebras.
\begin{theorem}\label{thm:McKay-ss}
	There exists an equivalence
	\begin{align}
			\catCohb( \bfCoh_\mu^{\omega\textrm{-}\mathsf{ss}}(Y) )\simeq \catCohb(\bfCoh^{\zeta\textrm{-}\mathsf{ss}}_0(\Pi_\calQ) )
	\end{align}
	as $\mathbb E_1$-monoidal dg-categories. Moreover, the same holds equivariantly with respect to a diagonal torus $T\subset \GL(2,\C)$ centralizing the finite group $G$.
\end{theorem}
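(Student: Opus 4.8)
The plan is to combine the categorical McKay correspondence for semistable objects, Theorem~\ref{categmckay}, with the $2$-Segal and correspondence formalism set up in \S\ref{ss:categorified-quiver}. The statement will follow once we produce an equivalence of $2$-Segal objects
\begin{align}\label{eq:target-2Segal-plan}
	\scrS_\bullet \R\bfCoh_\mu^{\omega\textrm{-}\mathsf{ss}}(Y) \;\simeq\; \scrS_\bullet \R\bfCoh^{\zeta\textrm{-}\mathsf{ss}}_0(\Pi_\calQ)
\end{align}
in $\Corr(\dGeom)_{\mathsf{rps},\mathsf{lci}}$, compatible with the $2$-Segal structures on $\R\bfPerCoh_{\mathsf{c}}(Y/X)$ and $\R\bfCoh(\Pi_\calQ)$; applying the right-lax monoidal functor $\catCohb$ then upgrades this to the desired equivalence of $\mathbb E_1$-monoidal dg-categories, exactly as in the proofs of Theorem~\ref{thm:coh_algebra_in_correspondence} and Theorem~\ref{thm:CHA}.

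To build \eqref{eq:target-2Segal-plan} I would first record the classical picture. Theorem~\ref{categmckay} identifies, via the tilting functor \eqref{eq:cateqB}, the abelian category of $\omega$-semistable compactly supported sheaves on $Y$ of slope $\mu$ (the zero-dimensional sheaves when $\mu=\infty$) with the abelian category of $\zeta$-semistable finite-dimensional $\Pi_\calQ$-modules of zero $\zeta$-slope, sitting inside $\sfP_\sfc(Y/X)\simeq\mathsf{mod}(\Pi_\calQ)$; functoriality of Artin-Zhang's moduli stacks then gives $\bfCoh_\mu^{\omega\textrm{-}\mathsf{ss}}(Y)\simeq\bfPerCoh_{\mathsf{c},\,0}^{\zeta\textrm{-}\mathsf{ss}}(Y/X)\simeq\bfRep^{\zeta\textrm{-}\mathsf{ss}}_0(\Pi_\calQ)$, compatibly with the open immersions into $\bfRep(\Pi_\calQ)$. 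Next I would lift this to the derived level: the tilting equivalence \eqref{eq:inftyMcKay} restricts to $\catPerf(Y)\xrightarrow{\sim}\catPerf(\Pi_\calQ)$, hence, by functoriality of To\"en-Vaqui\'e's moduli of objects, to $\scrM_{\catPerf(Y)}\xrightarrow{\sim}\scrM_{\catPerf(\Pi_\calQ)}$. Since $\R\bfCoh^{\zeta\textrm{-}\mathsf{ss}}_0(\Pi_\calQ)$ is the open submoduli functor of families flat for the standard t-structure and fibrewise $\zeta$-semistable of slope $0$, while $\R\bfCoh_\mu^{\omega\textrm{-}\mathsf{ss}}(Y)$ is the open submoduli functor of families flat for the standard t-structure on $\catPerf(Y)$ and fibrewise $\omega$-semistable of slope $\mu$, I would check that the equivalence of moduli of objects carries the one onto the other, refining the classical isomorphism. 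Being open substacks of $\R\bfPerCoh_{\mathsf{c}}(Y/X)$ and $\R\bfCoh(\Pi_\calQ)$ respectively, they inherit restricted $2$-Segal structures, and since the relevant structure maps remain representable by proper schemes and derived lci after restriction, \eqref{eq:target-2Segal-plan} follows in $\Corr(\dGeom)_{\mathsf{rps},\mathsf{lci}}$.

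Finally, applying $\catCohb\colon\Corr(\dGeom)_{\mathsf{rps},\mathsf{lci}}\to\Cat_\infty^{\mathrm{st}}$ to \eqref{eq:target-2Segal-plan} yields an equivalence of $\mathbb E_1$-algebras in $\Cat_\infty^{\mathrm{st}}$, which unwinds to the claimed monoidal equivalence; the equivariant statement is obtained by running the whole argument $\C^\ast\times\C^\ast$-equivariantly. I expect the one step carrying genuine content — as opposed to formal transport along an equivalence — to be the compatibility of the two stability conditions \emph{in families}: namely that an $A$-flat family of objects of $\catPerf(Y)$ whose geometric fibres are $\omega$-semistable sheaves of slope $\mu$ is sent by \eqref{eq:inftyMcKay} to an $A$-flat family of $\zeta$-semistable $\Pi_\calQ$-modules of zero slope, and conversely, so that the two open substacks agree scheme-theoretically and not merely on points. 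This is precisely where the results of \S\ref{s:semistable} — in particular Lemma~\ref{finitewallE}, Proposition~\ref{finitewallF} and Proposition~\ref{sheafwallcorA} — are used to control the passage between $\omega$-semistability of one-dimensional sheaves on $Y$ and $\zeta$-semistability of perverse coherent sheaves, after which openness of semistability gives the identification of open substacks.
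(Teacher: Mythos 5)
Your proposal follows essentially the same route as the paper: the classical identification $\bfCoh_\mu^{\omega\textrm{-}\mathsf{ss}}(Y)\simeq\bfPerCoh_{\mathsf{c},\,0}^{\zeta\textrm{-}\mathsf{ss}}(Y/X)\simeq\bfRep^{\zeta\textrm{-}\mathsf{ss}}_0(\Pi_\calQ)$ via Theorem~\ref{categmckay} and Artin--Zhang functoriality, the derived lift through To\"en--Vaqui\'e functoriality and openness inside $\R\bfCoh(\Pi_\calQ)\simeq\R\bfPerCoh_{\mathsf{c}}(Y/X)$, the induced equivalence of $2$-Segal objects, and then the right-lax monoidal functor $\catCohb$ on correspondences, equivariantly as well. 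The family-level compatibility of the two flatness/semistability conditions that you flag is treated in the paper at the same level of detail (via the assertion that both derived enhancements are open substacks of $\R\bfCoh(\Pi_\calQ)$ with matching truncations), so your argument matches the paper's proof.
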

\begin{corollary}
	There are isomorphisms of associative algebras
	\begin{align}
		\sfG_0( \bfCoh_\mu^{\omega\textrm{-}\mathsf{ss}}(Y)  ) \simeq \sfG_0( \bfCoh^{\zeta\textrm{-}\mathsf{ss}}_0(\Pi_\calQ) ) \quad\text{and}\quad \sfH_\ast^{\mathsf{BM}}( \bfCoh_\mu^{\omega\textrm{-}\mathsf{ss}}(Y)  ) \simeq \sfH_\ast^{\mathsf{BM}}( \bfCoh^{\zeta\textrm{-}\mathsf{ss}}_0(\Pi_\calQ) )\ .
	\end{align}
	Moreover, the same holds equivariantly with respect to a diagonal torus $T\subset \GL(2,\C)$ centralizing the finite group $G$.
\end{corollary}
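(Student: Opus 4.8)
The plan is to deduce both isomorphisms formally from the preceding theorem together with the equivalence of $2$-Segal spaces
\[
	\scrS_\bullet \R\bfCoh_\mu^{\omega\textrm{-}\mathsf{ss}}(Y) \simeq \scrS_\bullet \R\bfPerCoh_{\mathsf{c},\, 0}^{\zeta\textrm{-}\mathsf{ss}}(Y/X) \simeq \scrS_\bullet \R\bfCoh^{\zeta\textrm{-}\mathsf{ss}}_0(\Pi_\calQ)
\]
recorded just above it, by applying the relevant decategorification functors.

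For the $\sfG_0$ isomorphism: after passing to K-theory the $\mathbb E_1$-monoidal structure on $\catCohb(\R\bfCoh_\mu^{\omega\textrm{-}\mathsf{ss}}(Y))$, whose product is $q_\ast\circ p^\ast$ for the convolution diagram attached to its $2$-Segal structure, induces an associative algebra structure on $\sfG_0(\catCohb(\R\bfCoh_\mu^{\omega\textrm{-}\mathsf{ss}}(Y)))$, and similarly on the quiver side (cf.\ Corollary~\ref{cor:COHA} and its analogue for $\R\bfCoh_\mu^{\omega\textrm{-}\mathsf{ss}}(Y)$ proved in \cite{Porta_Sala_Hall}). A monoidal equivalence of $\mathbb E_1$-monoidal dg-categories becomes, after applying $\sfG_0$, an isomorphism of the associated associative algebras, so the preceding theorem gives $\sfG_0(\catCohb(\R\bfCoh_\mu^{\omega\textrm{-}\mathsf{ss}}(Y)))\simeq \sfG_0(\catCohb(\R\bfCoh^{\zeta\textrm{-}\mathsf{ss}}_0(\Pi_\calQ)))$ as algebras. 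It then remains to identify each side with $\sfG_0$ of the corresponding classical stack; since $\sfG_0$ is insensitive to derived thickenings, $\sfG_0(\catCohb(\R\bfCoh_\mu^{\omega\textrm{-}\mathsf{ss}}(Y)))\simeq \sfG_0(\bfCoh_\mu^{\omega\textrm{-}\mathsf{ss}}(Y))$ and $\sfG_0(\catCohb(\R\bfCoh^{\zeta\textrm{-}\mathsf{ss}}_0(\Pi_\calQ)))\simeq\sfG_0(\bfRep^{\zeta\textrm{-}\mathsf{ss}}_0(\Pi_\calQ))$, exactly as in the computation of $\sfG_0(\catCohb(\R\bfCoh(\Pi_\calQ)))$ carried out in \S\ref{ss:categorified-quiver}, restricted to the open semistable loci.

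For the Borel-Moore homology isomorphism there is no functor out of $\mathbb E_1$-monoidal dg-categories, so instead I would apply the Borel-Moore homology functor for higher Artin stacks of Kapranov-Vasserot \cite{COHA_surface} --- which, as recalled in \S\ref{ss:categorified-quiver}, turns a $2$-Segal object of $\Corr(\dGeom)_{\mathsf{rps},\mathsf{lci}}$ into an associative algebra by means of $q_\ast\circ p^!$ with $p$ the derived lci map in the convolution diagram --- directly to the displayed equivalence of $2$-Segal spaces. Borel-Moore homology depends only on the underlying classical stacks, which are identified by Theorem~\ref{categmckay} together with the functoriality of Artin-Zhang's construction, while the virtual pullbacks defining the convolution product are governed by the $2$-Segal structure; hence the equivalence of $2$-Segal spaces promotes the resulting vector-space isomorphism to an isomorphism $\sfH_\ast^{\mathsf{BM}}(\bfCoh_\mu^{\omega\textrm{-}\mathsf{ss}}(Y))\simeq \sfH_\ast^{\mathsf{BM}}(\bfRep^{\zeta\textrm{-}\mathsf{ss}}_0(\Pi_\calQ))$ of associative algebras.

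The equivariant statements follow by rerunning the argument with the natural $\C^\ast\times\C^\ast$-actions inserted throughout: the tilting functor, the equivalence $\scrM_{\catPerf(Y)}\simeq\scrM_{\catPerf(\Pi_\calQ)}$, the $2$-Segal structures, and the equivariant variants of $\sfG_0$ and $\sfH_\ast^{\mathsf{BM}}$ are all $\C^\ast\times\C^\ast$-equivariant. The statement is essentially formal; the only points genuinely needing care, as opposed to pure bookkeeping, are the identification of $\sfG_0$ and $\sfH_\ast^{\mathsf{BM}}$ of $\catCohb$ of the derived semistable stacks with the classical invariants, and the verification that the equivalence of the preceding theorem is genuinely monoidal --- i.e.\ compatible with the convolution correspondences, not merely an equivalence of underlying dg-categories --- but the latter is precisely the information packaged in the equivalence of $2$-Segal spaces above, so no new input is required.
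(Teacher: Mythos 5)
Your proposal is correct and follows essentially the same route as the paper: the paper treats this corollary as an immediate consequence of the preceding $\mathbb E_1$-monoidal equivalence, decategorified exactly as you describe --- $\sfG_0$ via the identification of the K-theory of $\catCohb$ of the derived semistable stacks with that of their classical truncations (as already done for $\R\bfCoh(\Pi_\calQ)$ in \S\ref{ss:categorified-quiver}), and $\sfH_\ast^{\mathsf{BM}}$ via the Kapranov--Vasserot formalism applied to the equivalence of 2-Segal spaces in $\Corr(\dGeom)_{\mathsf{rps},\mathsf{lci}}$, with the equivariant case handled verbatim. No gaps.
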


\subsection{Categorified McKay correspondence in type A}

Let $G$ be $\Z_{N+1}$ for $N\in \Z$, $N\geq 1$ (hence, $r=N$). From now on, let $\mu\neq \infty$. As explained in Remark~\ref{partitionchains}, there is a unique partition into pairwise disjoint subsets 
\begin{align}
	\sfS_\mu = \bigcup_{c=1}^{C_\mu} \{\, \alpha_{c,1}, \ldots, \alpha_{c,s_c}\, \}
\end{align}
such that for each $1\leq c\leq C_\mu$ the ordered sequence 
\begin{align}	
	\balpha_c \coloneqq (\alpha_{c,1}, \ldots, \alpha_{c, s_c})
\end{align}
is a chain for all $1\leq c \leq C_\mu$. 

Denote by $\bfCoh^{\omega\textrm{-}\mathsf{ss}}_{\balpha_c}(Y)$ the moduli stack of $\omega$-semistable properly supported sheaves on $Y$ which belongs $\balpha_c$ (cf.\ Definition~\ref{JHchain}). Corollaries~\ref{chaincoroA} and \ref {chaincorB} implies that the partition of $\sfS_\mu$ into chains determines a natural isomorphism of stacks
\begin{align}\label{eq:prodstackisomB}
	\bfCoh_\mu^{\omega\textrm{-}\mathsf{ss}}(Y) \simeq \bigtimes_{c=1}^{C_\mu}\, \bfCoh^{\omega\textrm{-}\mathsf{ss}}_{\balpha_c}(Y)\ .
\end{align}
On the other hand, by Proposition~\ref{chainpropA}-\ref{item:chainpropA-(iii)}, Proposition~\ref {chainpropB} and Corollary~\ref {chaincoroC} we have 
\begin{align}
	\bfCoh^{\omega\textrm{-}\mathsf{ss}}_{\balpha_c}(Y)\simeq \bfCoh_{\mathsf{ps}}(\Pi_{A_{s_c}})\ ,
\end{align}
for each $c=1, \ldots, C_\mu$. Here $A_{s}$ denotes the type A Dynkin diagram with $s$ vertices. We can upgrade the above isomorphism to an isomorphism
\begin{align}
		\calS_\bullet \bfCoh_\mu^{\omega\textrm{-}\mathsf{ss}}(Y) \simeq \bigtimes_{c=1}^{C_\mu}\, \calS_\bullet  \bfCoh_{\mathsf{ps}}(\Pi_{A_{s_c}})
\end{align}
of 2-Segal spaces. Thus, we obtain the main result of this section: 
\begin{theorem}\label{thm:McKay-chain} 
	Let $\omega$ be a $\Q$-polarization of $Y$ and $\mu\in \Q_{>0}$. Let $\lambda_\mu\coloneqq (s_1, \ldots, s_{C_\mu})$ be the partition associated with $\mu$ as in Remark~\ref {partitionchains}. Then we have a functor
	\begin{align}
		 \bigotimes_{c=1}^{C_\mu}\, \catCohb( \bfCoh_{\mathsf{ps}}(\Pi_{A_{s_c}}) ) \longrightarrow	\catCohb( \bfCoh_\mu^{\omega\textrm{-}\mathsf{ss}}(Y) ) 
	\end{align}
	of $\mathbb E_1$-monoidal dg-categories. It induces isomorphisms of associative algebras
	\begin{align}
		\sfG_0( \bfCoh_\mu^{\omega\textrm{-}\mathsf{ss}}(Y)  ) \simeq \bigotimes_{c=1}^{C_\mu}\,  \sfG_0( \bfCoh_{\mathsf{ps}}(\Pi_{A_{s_c}}) ) \quad\text{and}\quad \sfH_\ast^{\mathsf{BM}}( \bfCoh_\mu^{\omega\textrm{-}\mathsf{ss}}(Y)  ) \simeq \bigotimes_{c=1}^{C_\mu}\, \sfH_\ast^{\mathsf{BM}}( \bfCoh_{\mathsf{ps}}(\Pi_{A_{s_c}}) )\ .
	\end{align}
	Moreover, these results holds also equivariantly with respect to a diagonal torus $T\subset \GL(2,\C)$ centralizing $\Z_{N+1}$.
\end{theorem} 

\begin{remark}\label{rem:McKay-chain}
	Let $\omega$ be a $\Q$-polarization of $Y$ and $\mu\in \Q_{>0}$. Set $\zeta_i\coloneqq \langle \omega, C_i\rangle$ for $1\leq i \leq N$, and
	\begin{align}
		\zeta_{N+1}\coloneqq\frac{1}{\mu}  - \sum_{i=1}^{N}\, \zeta_i\ .
	\end{align}
	Then, Theorems~\ref{thm:McKay-ss} and \ref{thm:McKay-chain} imply that one has a functor
	\begin{align}
		\bigotimes_{c=1}^{C_\mu}\, \catCohb( \bfCoh_{\mathsf{ps}}(\Pi_{A_{s_c}}) ) \longrightarrow	\catCohb( \bfCoh^{\zeta\textrm{-}\mathsf{ss}}_0(\Pi_\calQ) ) 
	\end{align}
	of $\mathbb E_1$-monoidal dg-categories. It induces isomorphisms of associative algebras
	\begin{align}
		\sfG_0( \bfCoh^{\zeta\textrm{-}\mathsf{ss}}_0(\Pi_\calQ)  ) \simeq \bigotimes_{c=1}^{C_\mu}\,  \sfG_0( \bfCoh_{\mathsf{ps}}(\Pi_{A_{s_c}}) ) \quad\text{and}\quad \sfH_\ast^{\mathsf{BM}}( \bfCoh^{\zeta\textrm{-}\mathsf{ss}}_0(\Pi_\calQ) ) \simeq \bigotimes_{c=1}^{C_\mu}\, \sfH_\ast^{\mathsf{BM}}( \bfCoh_{\mathsf{ps}}(\Pi_{A_{s_c}}) )\ .
	\end{align}
	Moreover, these results holds also equivariantly.
\end{remark}

\subsubsection{Betti numbers and Kac polynomials}\label{ss:betti_numbers}

Let $\Pi_\calQ$ be the preprojective algebra of an arbitrary quiver $\calQ$, $\zeta\in\Z^{\calQ_0}$ a stability condition, and $\vartheta\in \Q$ a fixed slope. Then, by \cite[Theorems~6.4 and D]{Davison_integrality}, one gets 
\begin{align}\label{eq:Kac-polynomial}
	\sum_{\genfrac{}{}{0pt}{}{\bfd}{\bfd=0\text{ or }\mu(\bfd)=\vartheta}}\, \sum_{i\in\Z}\, \dim\, \sfH_i^{\mathsf{BM}}(\bfCoh(\Pi_\calQ; \bfd)^{\zeta\textrm{-}\mathsf{ss}}_\vartheta) q^{\langle\bfd,\bfd\rangle+i/2} y^{\bfd} = \mathsf{Exp} \Big(\frac{1}{q-1}\,\sum_{\bfd\neq 0}\, a_{\calQ,\, \bfd}^{\zeta\textrm{-}\mathsf{ss}}(q^{-1/2}) y^\bfd\Big)
\end{align}
 where $a_{\calQ,\, \bfd}^{\zeta\textrm{-}\mathsf{ss}}$ is the weight series of the \textit{BPS sheaf} on an associated coarse moduli space\footnote{The latter is a bounded complex of constructible sheaves on the coarse moduli space of representations of a quiver with a potential $({\widetilde \calQ}, W)$, canonically associated to $\Pi_\calQ$. The construction is fairly involved and we refer the reader to \cite{Davison_DT, Davison_integrality} for more details.} and $y^\bfd \coloneqq \prod_i y_i^{d_i}$ for any dimension vector $\bfd$.  

As before, fix $\calQ=A_N^{(1)}$ and a stability condition $\zeta\in \Z^{N+1}$ subjects to the following conditions: $\zeta_i>0$ for $i=1, \ldots, N$ and
\begin{align}
	\mu\coloneqq \frac{1}{\zeta_{N+1}+\sum_{i=1}^{N}\, \zeta_i}>0\ .
\end{align}
Following Remark~\ref{partitionchains}, let $\Delta_c^{+}$ denote the set of positive roots of the associated Lie algebra of type $A_{s_c}$ for any $1\leq c \leq C_\mu$. Let $\Delta_\mu^+$ be the disjoint union 
\begin{align}
	\Delta^+_\mu\coloneqq \bigsqcup_{c=1}^{C_\mu} \Delta_c^+\ .
\end{align}
Since, for each $1\leq c \leq C_\mu$, the simple roots of $A_{s_c}$ are identified by construction to the isomorphism classes $\alpha_{c,1}, \ldots, \alpha_{c, s_c}$, there is a natural map $\bfd \colon  \Delta^+_\mu \to \Z^{N+1}$ given by 
\begin{align}
	\bfd(\lambda_c)  = \sum_{\ell=i}^j\, \bfd(\alpha_{c, \ell})\ ,
\end{align}
if $\lambda_c = \sum_{\ell=i}^j\, \alpha_{c, \ell}\in \Delta_c^+$, with $1\leq c\leq C_\mu$. Here $\bfd(\alpha_{c,\ell})$ denote the associated dimension vector of an arbitrary representative $\calF_{\alpha_{c, \ell}}$ of $\alpha_{c,\ell}$ (cf.\ Remark~\ref{rem:stable-dimension-vector}). 

Recall that the Kac polynomial $a_{A_s,\, \bfe}(t)$ of the quiver $A_s$ and dimension vector $\bfe$ is nonzero if and only if $\bfe$ is a positive root, and in this case its value is exactly one. Then, by using Remark~\ref{rem:McKay-chain} and applying Formula~\eqref{eq:Kac-polynomial} twice, i.e., to $\calQ$ and $\zeta$ above and to the quivers $A_{s_c}$'s and the degenerate stability condition, yield to the following result.
\begin{proposition}
	We have:
	\begin{align}
		\sum_{\bfd\in \Z^{N+1}, \, \bfd\neq 0}\, a_{\calQ,\, \bfd}^{\zeta\textrm{-}\mathsf{ss}}(q^{-1/2}) y^\bfd
		= \sum_{c=1}^{C_\mu}\, \sum_{\lambda_c\in \Delta^+_c}\, y^{\bfd(\lambda_c)}\ .
	\end{align} 
\end{proposition}

\appendix

\section{Perverse coherent sheaves and tilting by a torsion pair}\label{s:yoshioka}

The goal of this section is to formulate and prove a version of \cite[Proposition~2.1.1]{YoshiokaPerverse}, which shows that $\catP(Y/X)$ can be obtained as a tilting by a torsion pair, in our \textit{local} setting.

\begin{definition}
	Let $\scrA$ be an abelian category. 
	A \textit{torsion pair} in $\scrA$ is a pair $\upsilon = (\scrT, \scrF)$ of full subcategories such that
	\begin{itemize}\itemsep0.2cm
		\item for any $T\in \scrT$ and $F\in \scrF$ one has $\Hom_{\scrA}(T, F)=0$; 
		\item every $X\in \scrA$ fits into an exact sequence
		\begin{align}
			0\longrightarrow T\longrightarrow E\longrightarrow F\longrightarrow 0
		\end{align}
		with $T\in \scrT$ and $F\in \scrF$.
	\end{itemize}
	In this case, we refer to $\scrT$ as the \textit{torsion part} of $\upsilon$ and to $\scrF$ as the \textit{torsion-free part} of $\upsilon$.
\end{definition}

The proof of next lemma consists of standard category theory and we leave it to the reader.
\begin{lemma}\label{lem:torsion_pairs_extensions_mono_epi}
	Let $\scrA$ be an abelian category and let $\upsilon = (\scrT, \scrF)$ be a torsion pair on $\scrA$.
	Then:
	\begin{enumerate}\itemsep=0.2cm
		\item both $\scrT$ and $\scrF$ are closed under extensions;
		
		\item if $T \to T'$ is an epimorphism in $\scrA$ and $T \in \scrT$, then $T' \in \scrT$;
		
		\item if $F' \to F$ is a monomorphism in $\scrA$ and $F \in \scrF$, then $F' \in \scrF$.
	\end{enumerate}
\end{lemma}

Let $\calP$ be the locally free sheaf introduced in \eqref{eq:projgenZN}. Define
\begin{align}
	T(\calP) &\coloneqq \{ \calF\in \catCoh(Y)\, \vert\, \Ext^1(\calP, \calF)=0 \}\ ,  \\[4pt]
	S(\calP) &\coloneqq \{ \calF\in \catCoh(Y)\, \vert\, \Hom(\calP, \calF)=0 \}\ .
\end{align}

\begin{remark}\label{rem:torsion-pair}
	Note that $\calP\in T(\calP)$. Moreover, any coherent sheaf $\calE\in S(\calP)$ must be supported on $C$. Indeed,  by \cite[Chapter~3, Proposition~8.5]{Hartshorne_AG},  $\Hom(\calP, \calF) =0$ implies $\pi_\ast(\calP^\vee \otimes \calF)=0$ since $X$ is  affine. Since $\pi$ is an isomorphism on the complement of $C_{\mathsf{red}}$, $\calP^\vee \otimes\calF$ must be supported on $C_{\mathsf{red}}$, hence the same for $\calF$ since $\calP$ is a locally free sheaf.
\end{remark}

\begin{proposition}[{cf.\  \cite[Lemma~1.1.11-(1) and Proposition~1.1.13]{YoshiokaPerverse}}]\label{prop:tilting}
	The pair $(T(\calP), S(\calP))$ is a torsion pair  of $\catCoh(Y)$, whose tilting is $\catP(Y/X)$.
\end{proposition}

\begin{proof}
	First, note that we shall use the fact that for any coherent sheaf $\calF$ on $Y$, one has $\R^i \pi_\ast (\calF)=0$ for any $i\geq 2$ by \cite[Theorem~18.8.5]{Vakil_AG}, since the fibers of $\pi$ have dimension at most one. Thanks to  \cite[Chapter~3, Proposition~8.5]{Hartshorne_AG}, this implies that $H^i(Y, \calF)=0$ for $i\geq 2$.
	
	We need to prove that for any $\calT \in T(\calP)$ and $\calF\in  S(\calP)$, one has $\Hom(\calT, \calF)=0$. Let us assume that there exists  $f\in \Hom(\calT, \calF)$, $f\neq 0$. By applying $\Hom(\calP, -)$ to the short exact sequences
	\begin{align}
		0\longrightarrow \ker(f)\longrightarrow \calT \longrightarrow \mathsf{Im}(f) \longrightarrow 0 \ ,\\[4pt]
		0\longrightarrow \mathsf{Im}(f)\longrightarrow \calF \longrightarrow \mathsf{Coker}(f) \longrightarrow 0 \ ,
	\end{align}
	and using the definitions of $T(\calP)$ and $S(\calP)$, we get $\Ext^1(\calP, \mathsf{Im}(f))=0=\Hom(\calP, \mathsf{Im}(f))$. Thus, $\R\Hom(\calP, \mathsf{Im}(f))=0$. Hence, by the equivalence \eqref{eq:dercateqB}, one gets $\mathsf{Im}(f)=0$. 
	
	Let $\calE$ be a coherent sheaf on $Y$. Consider now the evaluation map
	\begin{align}
		\ev \colon \pi^\ast \pi_\ast (\calP^\vee \otimes \calE)\otimes \calP \longrightarrow \calE \ .
	\end{align}
	Set $\calT\coloneqq \mathsf{Im}(\ev)$ and $\calF\coloneqq \mathsf{Coker}(\ev)$. Consider the short exact sequence
	\begin{align}
		0\longrightarrow \calT \longrightarrow \calE \longrightarrow \calF\longrightarrow 0\ .
	\end{align}
	We want to prove that $\calT\in T(\calP)$ and $\calF\in S(\calP)$. First, note that the above sequence yields the long exact sequence
	\begin{align}
		0\longrightarrow \pi_\ast(\calP^\vee\otimes \calF) \longrightarrow \R^1\pi_\ast(\calP^\vee \otimes \calT) \longrightarrow \cdots 
	\end{align}
	Therefore it suffices to prove that 
	\begin{align}
		\R^1\pi_\ast(\calP^\vee \otimes \calT) =0\ . 
	\end{align}
	The exact sequence
	\begin{align}
		0 \longrightarrow \ker(\ev)\otimes \calP^\vee \longrightarrow  \pi^\ast \pi_\ast (\calP^\vee \otimes \calE)\otimes \calP\otimes \calP^\vee \longrightarrow \calT\otimes \calP^\vee \longrightarrow 0
	\end{align}
	yields a surjection
	\begin{align}
		\cdots \longrightarrow  \R^1\pi_\ast(\pi^\ast \pi_\ast (\calP^\vee \otimes \calE)\otimes \calP\otimes \calP^\vee) \longrightarrow \R^1\pi_\ast (\calT\otimes \calP^\vee) \longrightarrow 0 \ ,
	\end{align}
	since $\R^2\pi_\ast (\ker(\ev)\otimes \calP^\vee)=0$ by the arguments at the beginning of the proof. Hence it suffices to prove that
	\begin{align}
		 \R^1\pi_\ast(\pi^\ast \calG\otimes \calP\otimes \calP^\vee) =0 \ ,
	\end{align}
	where $\calG \coloneqq \pi_\ast(\calP^\vee\otimes \calE)$. Since $X$ is quasi-projective, there is a coherent locally free $\scrO_X$-module $\calV$ and a surjection 
	$\calV \xrightarrow{v} \calG$. Then $\pi^\ast v$ is also surjective, hence 
	we obtain an exact sequence 
	\begin{align}
		0\longrightarrow \calK \longrightarrow \pi^\ast\calV \longrightarrow \pi^\ast\calG \longrightarrow 0\ ,
	\end{align}
	where $\calK \coloneqq \ker(\pi^\ast v)$ is a coherent $\scrO_Y$-module. 
	This yields the exact sequence 
	\begin{align}
		\cdots \to \R^1\pi_\ast(\pi^\ast\calV \otimes \calP\otimes \calP^\vee) \longrightarrow 
	\R^1\pi_\ast(\pi^\ast\calG \otimes \calP\otimes \calP^\vee)\longrightarrow \R^2\pi_\ast(\calK\otimes 
	\calP\otimes \calP^\vee) \longrightarrow \cdots 
	\end{align}	
	Again, as explained at the beginning of the proof, we must have $\R^2\pi_\ast(\calK\otimes 
	\calP\otimes \calP^\vee) =0$. 
	Moreover, the projection formula yields an isomorphism 
\begin{align}
	\R^1\pi_\ast(\pi^\ast \calV \otimes \calP\otimes \calP^\vee) \simeq \calV \otimes
	\R^1\pi_\ast( \calP\otimes \calP^\vee) \ ,
\end{align}
	where the right hand side is identically zero since $\calP$ belongs to 
	$T(\calP)$. In conclusion,  
	\begin{align}
		\R^1\pi_\ast(\pi^\ast\calG \otimes \calP\otimes \calP^\vee)=0
	\end{align}
	as claimed above.
	
	Let us denote by $\scrC(\calP)$ the tilted category associated to $(T(\calP), S(\calP))$, i.e.,
	\begin{align}
		\scrC(\calP)=\{ E\in \catDb(\catCoh(Y))\, \vert\, \calH^{-1}(E)\in S(\calP)\ ,  \calH^0(E)\in T(\calP) \ , \text{ and } \calH^i(E)=0 \text{ for } i\neq -1, 0 \} \ .
	\end{align}
	
	Let $E\in \catP(Y/X)$. We have the spectral sequence
	\begin{align}
		E_2^{p, q}\coloneqq \Ext^p(\calP, \calH^q(E)) \Longrightarrow \Ext^{p+q}(\calP, E) \ .
	\end{align}
	First, notice that it degenerates since $E_2^{p,q}=0$ for $p\neq 0, 1$. Moreover, $\Ext^i(\calP, E)=0$ for $i\neq 0$, by \cite[Corollary~3.2.8]{VdB_Flops}. Therefore, $\Hom(\calP,  \calH^{-1}(E)) =0$ and $\Ext^1(\calP, \calH^0(E))=0$. Thus, $\calH^{-1}(E)\in S(\calP)$ and $\calH^0(E)\in T(\calP)$. Therefore, $E\in \scrC(\calP)$.
	
	Conversely, $S(\calP)[1], T(\calP)\subset \catP(Y/X)$ since they are both mapped into $\catMod(\Pi_\calQ)$ by the equivalence \eqref{eq:dercateqB}. Therefore, $\scrC(\calP)\subset \catP(Y/X)$.
\end{proof}

Now, we analyze the simple objects of $\catP(Y/X)$. First, by \cite[Proposition~3.5.7]{VdB_Flops} 
\begin{align}
	\calI_j\coloneqq \begin{cases}
		\scrO_{C} & \text{for } j=r+1\ ,\\
		\scrO_{C_j}(-1)[1] & \text{for } j=1, \ldots, r\ .
	\end{cases}
\end{align}
corresponds to the $j$-th simple module $\calS_j$ associated to the vertex $j$ of the affine quiver, for $j=1, \ldots, r+1$. Moreover, one has the following:
\begin{lemma}\label{lem:simple} 
	Let $p\in Y \setminus C_{\mathsf{red}}$. Then $\tau(\scrO_p)$ is a simple object, which is not isomorphic to $\calS_1, \ldots, \calS_{r+1}$.
\end{lemma} 

\begin{proof}
	Note that $\scrO_p$ belongs to the abelian category of properly supported perverse coherent sheaves $\catPps(Y/X)$. Therefore $\tau(\scrO_p)$ is a finite-dimensional representation of $\Pi_\calQ$. Since each direct summand of the local projective generator $\calP$, defined in \eqref{eq:projgenZN}, has rank $\mathsf{rk}(\calE_i)= m_i$, for $1\leq i \leq r+1$, the dimension vector of $\tau(\scrO_p)$ is $\bfm\coloneqq(m_1, \ldots, m_r)$. 
	
	We have to show that $\tau(\scrO_p)$ is simple. Let us assume that it is not. Then it must be nilpotent by \cite[Lemma~2.31-(2)]{Tilting_moduli}, therefore its composition factors belong to $\{\calS_1, \cdots, \calS_{r+1}\}$ up to isomorphism (cf.\ \cite[Definition~2.14]{Tilting_moduli}). In particular $\tau(\scrO_p)$ admits a sub-representation $\calS_i\subset \tau(\scrO_p)$ for some $1\leq i \leq r+1$. This implies that there is an injective morphism 
	\begin{align}
		\tau^{-1}(\calS_i) \longrightarrow \scrO_p 
	\end{align}
	in $\catPps(Y/X)$. However, as described in Remark~\ref{rem:simple}, all objects $\tau^{-1}(\calS_i)$ are set-theoretically supported in $C_{\mathsf{red}}$, hence this leads to a contradiction under the current assumptions. In conclusion, $\tau(\scrO_p)$ must be simple if $p \in Y \setminus C_{\mathsf{red}}$.
	
	Since $\tau(\scrO_p)$ is finite-dimensional, by \cite[Lemma~2.31-(1)]{Tilting_moduli}, $\tau(\scrO_p)$ is not isomorphic to $\calS_1$, $\ldots$, $\calS_{r+1}$.\footnote{This is evident since $p\in Y \setminus C_{\mathsf{red}}$, while $\calS_j$ corresponds, via $\tau$, to coherent sheaves set-theoretically supported on $C$, for $j=1, \ldots, r+1$.}
\end{proof}

\begin{rem}
	One has $\scrO_p, \scrO_C\in T(\calP)$, where $p\in Y \setminus C_{\mathsf{red}}$, and $\scrO_{C_j}(-1)\in S(\calP)$ thanks to Proposition~\ref{prop:tilting}. \hfill $\triangle$
\end{rem}

\begin{lemma}\label{lem:simpleobj} 
	Suppose $E \in \catP(Y/X)$ is a simple object and properly supported. Then $E$ is isomorphic to one of the 
	$\calI_i$, for $i=1, \ldots, r+1$ or to $\scrO_p$, for a point $p \in Y\setminus C_{\mathsf{red}}$. 
\end{lemma} 

\begin{proof}
	Suppose $E$ is simple and not isomorphic to any of the 
$\calI_i$, for $i=1, \ldots, r+1$. Since $\tau(E)$ is finite-dimensional, \cite[Lemma~2.31-(1)]{Tilting_moduli} shows that 
its dimension vector must be $\bfd(E)= (m_1, \ldots, m_{r+1})$, and Lemma~\ref{dimvectlemm} yields $\ch_1(E)=0$ and 
$\chi(E) =1$. 

We claim that $\calH^{-1}(E)=0$. Suppose this is not the case. Since 
$\calH^{-1}(E)[1]$ is a sub-object of $E$, and $E$ is simple, one must have 
$\calH^{-1}(E)[1]\simeq E$. Then $\ch_1(\calH^{-1}(E))=0$, which implies that $\calH^{-1}(E)$ is a zero-dimensional sheaf. This further implies that $\chi(E)<0$, in contradiction with 
$\chi(E)=1$. 

In conclusion, $E\simeq \calH^0(E)$ is a zero-dimensional sheaf with $\chi(E)=1$. 
Hence $E \simeq \scrO_p$ for some reduced point  $p\in Y$. Suppose $p \in C_i$ 
for some $1\leq i \leq r$. Then one has an exact sequence 
\begin{align}
	0\longrightarrow \scrO_{C_i}(-1)[1] \longrightarrow \scrO_p \longrightarrow \scrO_{C_i} \longrightarrow 0 
\end{align}
in ${\sf P}(Y/X)$, which contradicts simplicity. Therefore $p \in Y \setminus C_{\mathsf{red}}$. 
\end{proof}

Define 
\begin{align}
	\Sigma& \coloneqq \{\scrO_{C_j}(-1) \,\vert\, j=1, \ldots, r\}\ ,\\[4pt]
	T& \coloneqq \{\calE\in \catCoh(Y)\, \vert\, \Hom(\calE, \calF)=0 \text{ for any } \calF\in \Sigma\}\ , \\[4pt]
	S & \coloneqq \{\calE\in \catCoh(Y)\, \vert\, \calE \text{ is a successive extension of subsheaves of } \calF\in \Sigma\} \ . 
\end{align}

Since $\Sigma\subset S(\calP)$, it is straightforward to see that $S\subseteq S(\calP)$. Let us prove the converse.
\begin{lemma}[{cf.\ \cite[Lemma~1.1.23]{YoshiokaPerverse}}]\label{lem:S}
	One has $S(\calP)\subseteq S$.
\end{lemma}

\begin{proof}
	We shall prove that for any $\calE\in S(\calP)$, there exists a filtration
	\begin{align}
		0\subset \calF_1\subset \cdots\subset \calF_r=\calE
	\end{align}
	such that for any $k=1, \ldots, r$, there exists $j\in\{1, \ldots, r\}$ and an injective morphism $\calF_k/\calF_{k-1} \to \scrO_{C_j}(-1)$.
	
	Since $\calE\in S(\calP)$, $\calE[1]\in \catP(Y/X)$. Then, there exists a surjective morphism from $\calE[1]$ to a simple object $G$ of $\catP(Y/X)$. By Remark~\ref{rem:torsion-pair}, $\calE[1]$ is supported on $C$, hence it is properly supported. Then, also $G$ is properly supported. Therefore, by Lemma~\ref{lem:simpleobj}, $G$ is either isomorphic to one of the $\calI_i$, for $i=1, \ldots, r+1$ or to $\scrO_p$, for a point $p \in Y\setminus C_{\mathsf{red}}$.   
	
	We need to exclude $\scrO_p$ with $p\in Y \setminus C_{\mathsf{red}}$, since $\calE$ is set-theoretically supported on $C$. Then, $\calE[1]\to \calI_j$ for some $j\in \{1, \ldots, r+1\}$. We want to prove that $j\neq r+1$. Consider the kernel $F\coloneqq \ker(\calE[1]\to \calI_j)$ in $\catP(Y/X)$. We have an exact sequence
	\begin{align}
		0\longrightarrow \calH^{-1}(F) \longrightarrow \calE \longrightarrow \calH^{-1}(\calI_j) \longrightarrow \calH^0(F) \longrightarrow 0 \longrightarrow \calH^0(\calI_j) \longrightarrow 0 \ .
	\end{align}
	Therefore, $\calH^0(\calI_j)=0$, hence $j\neq r+1$ and therefore $\calI_j\simeq \scrO_{C_j}(-1)[1]$. Moreover, $\calH^{-1}(F)\in S(\calP)$. Set $\calE'\coloneqq \mathsf{Im}(\calE \longrightarrow \calH^{-1}(\calI_j)\simeq \scrO_{C_j}(-1))$. We have a short exact sequence
	\begin{align}
		0\longrightarrow \calH^{-1}(F)\longrightarrow\calE \longrightarrow \calE'\longrightarrow 0 \ ,
	\end{align}
	together with an injective map $\calE/\calH^{-1}(F)\simeq \calE'\to \scrO_{C_j}(-1)$. Since $\calH^{-1}(F)\in S(\calP)$, we can iterate this procedure by induction on the support of $\calE$. Thus, we get the assertion.
\end{proof}

The above lemma implies that $S(\calP)\cap T=0$. Moreover, $T(\calP)\subseteq T$ since $\Sigma\subset S(\calP)$. Let us show the converse.
\begin{lemma}\label{lem:T}
	One has $T\subseteq T(\calP)$. 
\end{lemma}

\begin{proof}
	Let $\calE\in T$, i.e., let $\calE$ be a coherent sheaf on $Y$ for which $\Hom(\calE, \scrO_{C_j}(-1))=0$ for $j=1, \ldots, r$. Consider the short exact sequence 
	\begin{align}
		0\longrightarrow \calT \longrightarrow \calE \longrightarrow \calF\longrightarrow 0 \ ,
	\end{align} 
	with $\calT\in T(\calP)$ and $\calF\in S(\calP)$. We get that $\Hom(\calF, \scrO_{C_j}(-1))=0$ for $j=1, \ldots, r$. Thus, $\calF\in S(\calP)\cap T$. Hence, $\calF=0$ and $\calE\in T(\calP)$.
\end{proof}

From the previous considerations together with Lemmas~\ref{lem:S} and \ref{lem:T}, it follows that $T=T(\calP)$ and $S=S(\calP)$. Therefore, we obtain the main result of this section.
\begin{proposition}[{cf.\ \cite[Propositions~1.1.26 and 2.1.1-(2)]{YoshiokaPerverse}}]\label{prop:yoshioka_prop_2.1.1}
	The pair $(T, S)$ is a torsion pair of $\catCoh(Y)$ whose tilting is $\catP(Y/X)$.
\end{proposition}

\section{Some sheaf cohomology group computations}

Let $S$ be a smooth rational projective surface and let $C_1, \ldots, C_k$, for $k\geq 2$, be an $A_k$-chain of smooth rational $(-2)$-curves on $S$. For any $1\leq i, j\leq k$ let
\begin{align}
	C_{i,j} \coloneqq \begin{cases}
		C_i+\cdots + C_j & \text{for }\ i\leq j\ , \\
		0 & \text{otherwise}\ . 
	\end{cases}
\end{align}

\begin{lemma}\label{lem:cohcompA}
	For any pair $1\leq i \leq j \leq k$ one has 
	\begin{align}\label{eq:chaincohA}
		H^0(\scrO_{C_{i,j}}) \simeq \C\ , \quad 
		H^p(\scrO_{C_{i,j}}) =0\text{ for }   p\geq 1\ ,
	\end{align}
	and
	\begin{align}\label{eq:chaincohB}
		H^1(\scrO_{C_{i,j}}(C_{i,j}))&\simeq \C\ ,\\
		H^p(\scrO_{C_{i,j}}(C_{i,j})) &=0\text{ for }
		p\geq 0, \ p \neq 1\ .
	\end{align}
	Moreover for any pair $1<i \leq j \leq k$ 
	\begin{align}\label{eq:chaincohC} 
		H^p(\scrO_{C_{i,j}}(C_{i-1,j})) =0 
	\end{align}
	for all $p\geq 0$.
\end{lemma} 
\begin{proof}
	The proof of equations \eqref{eq:chaincohA} and \eqref{eq:chaincohB} proceeds by induction on $j-i$. Both claims are clear for $j=i$ since $C_i \simeq \PP^1$ and $\scrO_{C_i}(C_i)\simeq \scrO_{C_i}(-2)$ for $i=1, \ldots, k$.  
	
	Suppose $i<j$. Then one has the canonical exact sequences 
	\begin{align}
		0\longrightarrow \scrO_{C_i}(-1) \longrightarrow \scrO_{C_{i,j}} \longrightarrow \scrO_{C_{i+1,j}} \longrightarrow 0\ , \\
		0\longrightarrow \scrO_{C_{i+1,j}}(-C_i) \longrightarrow \scrO_{C_{i,j}} \longrightarrow \scrO_{C_i} \longrightarrow 0\ .
	\end{align}
	The second sequence yields a third exact sequence 
	\begin{align}
		0\longrightarrow \scrO_{C_{i+1,j}}(C_{i+1,j}) \longrightarrow \scrO_{C_{i,j}}( C_{i,j}) \longrightarrow \scrO_{C_i}(-1) \longrightarrow 0
	\end{align}
	by taking a tensor product with the line bundle $\scrO_S( C_{i,j})$. Then equations \eqref{eq:chaincohA} and \eqref{eq:chaincohB} follow by an easy inductive argument using long exact sequences in cohomology. 
	
	The proof of \eqref{eq:chaincohC} is analogous. For $j=i$ one has $\scrO_{C_{i,j}}(C_{i-1,j}) \simeq \scrO_{C_i}(-1)$ hence the claim is obvious. For $j>i$ the inductive step uses the exact sequence 
	\begin{align}
		0\longrightarrow \scrO_{C_i}(-1) \longrightarrow \scrO_{C_{i,j}} \longrightarrow \scrO_{C_{i+1,j}}\longrightarrow 0\ .
	\end{align}
	Taking a tensor product with $\scrO_S(-C_{i-1,j})$ one obtains a second exact sequence 
	\begin{align}
		0\longrightarrow \scrO_{C_i}(-1) \longrightarrow \scrO_{C_{i,j}}(-C_{i-1,j})  \longrightarrow \scrO_{C_{i+1,j}}(-C_{i,j}) \longrightarrow 0\ ,
	\end{align}
	since $C_{i-1}$ and $C_{i+1}$ are disjoint. Then the claim follows again by induction. 
\end{proof}

Lemma~\ref {lem:cohcompA} yields:
\begin{corollary}\label{cor:cohcorA}
	For any $1\leq i \leq j \leq k$, one has 
	\begin{align}\label{eq:chaincohD}
		H^p(\scrO_S(-C_{i,j})) = 0\text{ for }p\geq 0
	\end{align}
	and
	\begin{align}\label{eq:chaincohE} 
		H^p(\scrO_S(C_{i,j})) \simeq \C\text{ for }0\leq p \leq 1\ .
	\end{align}
	Moreover,
	\begin{align}\label{eq:chaincohF} 
		H^2(\scrO_S(C_{i,j})) = 0\ .	
	\end{align}
\end{corollary} 
\begin{proof}
	All claims follow from Lemma~\ref {lem:cohcompA} using the canonical exact sequences 
	\begin{align}
		0 \longrightarrow \scrO_S(-D)\longrightarrow \scrO_S \longrightarrow \scrO_D\longrightarrow 0\ ,  \\
		0 \longrightarrow \scrO_S\longrightarrow \scrO_S(D) \longrightarrow \scrO_D(D)\longrightarrow 0 \ , 
	\end{align}
	associated to any effective divisor $D$ on $S$. One also has to use the fact that $S$ is rational, hence $H^p(\scrO_S) =0$ for $p\geq 1$.
\end{proof}

\providecommand{\bysame}{\leavevmode\hbox to3em{\hrulefill}\thinspace}
\providecommand{\MR}{\relax\ifhmode\unskip\space\fi MR }
\providecommand{\MRhref}[2]{%
	\href{http://www.ams.org/mathscinet-getitem?mr=#1}{#2}
}
\providecommand{\href}[2]{#2}

\end{document}